\newcommand{\ignore}[1]{}
\theoremstyle{plain}
\newtheorem{definition}{Definition}[section]
\newtheorem{theorem}[definition]{Theorem}
\newtheorem{proposition}[definition]{Proposition}
\newtheorem{lemma}[definition]{Lemma}
\newtheorem{remark}[definition]{Remark}
\numberwithin{equation}{section}
\renewcommand{\theequation}{\thesection.\arabic{equation}}
\def\dis{\displaystyle}
\DeclareMathOperator*{\supp}{supp}
\def\R{\mathbb{R}}
\def\norma#1#2{\|#1\|_{\lower 4pt \hbox{$ \scriptstyle #2$ }}}
\newcommand{\eps}{\varepsilon}
\newcommand{\U}{\mathcal U}
\newcommand{\li}{\mathcal L}
\newcommand{\dd}{\tfrac{\mathrm{d}}{\mathrm{d}t}}
\newcommand{\di}{\mathrm{d}}
\newcommand{\pp}{\underline{p}}
\newcommand{\cc}{\underline{c}}
\newcommand{\vv}{\underline{v}}
\newcommand{\co}{\mathfrak{c}}
\newcommand{\uo}{\mathfrak{u}}
\newcommand{\po}{\mathfrak{p}}
\newcommand{\Di}{\mathrm{D}}
\newcommand{\cv}{\mathbf{c}}
\newcommand{\uv}{\mathbf{u}}
\newcommand{\pv}{\mathbf{p}}
\newcommand{\Aa}{\mathbf A}
\newcommand{\Ll}{\mathbf L}
\newcommand{\muo}{\pmb{\mu}}
\newcommand{\nuo}{\pmb{\nu}}
\newcommand{\PC}{\mathcal{P}_c(C)}
\newcommand{\PBC}{\mathcal{P}_b(C)}
\newcommand{\evt}{\left(\Phi_{(0,t)}^{\mu^0}\right)_{\#}\mu^0}
\newcommand{\ex}{\text{e}}
\newcommand{\G}{C^1_b(C;Z)}
\newcommand{\Das}{\Di_{\Phi_{(s,t)}^{\mu^s}(c)}A}
\newcommand{\K}{\mathcal{K}_{\omega,\tau}}
\newcommand{\cP}{\mathcal{P}}
\newcommand{\cT}{\mathcal{T}}
\newcommand{\de}{\mathrm{d}}
\DeclareMathOperator*{\argmax}{argmax}
\author[S. Almi]{Stefano Almi}
\address[Stefano Almi]{Dipartimento di Matematica e Applicazioni ``R.~Caccioppoli'', Universit\`a di Napoli Federico II, via Cintia, 80126 Napoli, Italy}\email{stefano.almi@unina.it}
\author[R. Durastanti]{Riccardo Durastanti}
\address[Riccardo Durastanti]{Dipartimento di Matematica e Applicazioni ``R.~Caccioppoli'', Universit\`a di Napoli Federico II, via Cintia, 80126 Napoli, Italy}\email{riccardo.durastanti@unina.it}
\author[F. Solombrino]{Francesco Solombrino}
\address[Francesco Solombrino]{Dipartimento di Matematica e Applicazioni ``R.~Caccioppoli'', Universit\`a di Napoli Federico II, via Cintia, 80126 Napoli, Italy} \email{francesco.solombrino@unina.it}
\keywords{mean field optimal control, Pontryagin maximum principle, agent-based systems, differential equations on convex state spaces}
\subjclass[2020]{30L99, 34K30, 49J20, 49K20, 49Q22, 58E30}
\begin{document}

\title[PMP for agent-based models with convex state space]{A Pontryagin Maximum Principle for agent-based models with convex state space}

\begin{abstract}
We derive a first order optimality condition for a class of agent-based systems, as well as for their mean-field counterpart. A relevant difficulty of our analysis is that the state equation is formulated on possibly infinite-dimensional convex subsets of Banach spaces. This is a typical feature of many problems in multi-population dynamics, where a convex set of probability measures may account for the population, the degree of influence or the strategy attached to each agent. Due to the lack of a linear structure and of local compactness, the usual tools of needle variations and linearisation procedures used to derive Pontryagin type conditions have to be generalised to the setting at hand. This is done by considering suitable notions of differentials and by a careful inspection of the underlying functional structures.
\end{abstract}

\maketitle
\tableofcontents

\newpage

\section{Introduction}
\subsection*{Presentation of the problem.}
Large systems of interacting agents, the so-called \textit{multi-agent systems}, have enjoyed growing attention from several mathematical communities in recent years. Indeed, they may encompass a broad class of applications, including, e.g., the modelling of autonomous vehicle ensembles \cite{Bullo2009}, swarms and flocking structures in the animal kingdom \cite{CS2}, coordinated animal motion \cite{animal}, opinion dynamics on networks \cite{Bellomo2013}, pedestrian flows \cite{CPT}, cell aggregation and motility \cite{motility2,motility1}, cooperative robots \cite{robots}, and influence of key investors in the stock market \cite[Introduction]{stock}. The modeling of such systems is usually inspired  from Newtonian laws of motion and is usually described by large dynamical systems based on pairwise forces accounting for repulsion/attraction, alignment, self-propulsion/friction in biological, social, or economical interactions. On the one hand, such discrete formulations often give rise to intractable problems because of their very large dimensionality. On the other hand, for many applications one is not interested in the exact pattern of the single agent, which may be indistinguishable from another one, but rather on the \textit{collective behavior} of the system. This point of view is also relevant for \textit{control-theoretic} purposes \cite{Albi2016,AlbiPareschiZanella,ControlKCS}, where a policy maker aims at formulating control laws which are as generic and global as possible in order to steer a given system towards a desired goal.

To overcome the so-called \textit{curse of dimensionality} one is naturally lead to consider infinite-dimensional approximations of the class of multi-agent systems at hand. They usually take the form of a continuity equation for the time-dependent distribution $\mu$ of the agent on the state space (a probability measure), driven by a possibly nonlocal velocity field accounting for particle interactions and external control. This is often referred to as \textit{mean-field approximation} of large particle systems (see \cite{golse} as well as the survey~\cite{Choi2014}).
These equations are studied in the space of probability measures endowed with the \textit{Wasserstein metrics} of optimal transport (see Section \ref{s3} below). The well-posedness of mean-field optimal control problems, as well as their compliance with particle formulations, has been addressed in several recent contributions (see, e.g., \cite{CLOS, FLOS, For-Solo}) hinging on a variational approach.

From an applicative standpoint, the identification of mean-field optimal controls relies on the implementation of suitable optimality conditions rather then on the direct minimisation of a cost functional. Such optimality conditions have been studied in the recent literature \cite{BFRS, BonFra, BonRos, PMPWassConst, Burger-Totzeck, Burger-Tse, Pogodaev2020, Carmona2015}, resulting in a generalisation  to Wasserstein spaces of the celebrated {\em Pontryagin Maximum Principle (PMP)}. The works \cite{BFRS, BonFra, BonRos} are of particular interest for our purposes. Indeed, in such papers the conditions of Pontryagin type are recovered by adapting to the geometric setting of Wasserstein spaces some classical tools coming from optimal control techniques in Euclidean setting, such as the use of needle variations and linearisation of the evolution constraints. The optimal control has to maximise a suitable Hamiltonian field $\mathcal{H}(\nu, u)$, where $\nu$ is a probability measure on $\mathbb{R}^{2d}$ accounting for both state and costate of the system, evolving through the Wasserstein Hamiltonian flow driven by $\mathcal{H}$.

In recent applications, a refined modelling comes into play, where the state space of an agent is no more the Euclidean, but is instead represented by a convex subset $C$ of a possibly infinite-dimensional Banach space $E$. This point of view has been introduced in the context of {\em spatially inhomogeneous evolutionary games} in~\cite{AFMS}. In this case, the state variable is a pair $(x, \lambda) \in C := \mathbb{R}^{d} \times \mathcal{P} (K)$, where~$\mathcal{P}(K)$ is the space of probability measure on a compact metric space of pure strategies $K$. The agents' state is therefore described by their position~$x$ and by their mixed strategy~$\lambda$. A related point of view has been considered in other contributions \cite{ADEMS, Mor1, MS2020} to describe the evolution of systems where each agent has a time-evolving degree of influence decoded by $\lambda$. This is useful in many applications where agents could belong to different populations, such as leaders and followers. In particular, a mean-field selective optimal control problem has been analysed in~\cite{AAMS}, where the action of the policy maker aimed at steering the whole system through a parsimonious intervention on possibly temporary leaders, i.e., via a control law depending on the (time evolving) variable $\lambda$. Notice that also the particle formulation of this systems requires additional care, as the involved operators must satisfy a suitable set of assumptions (see, e.g.,~\cite{MS2020}), in order to guarantee existence and uniqueness of solutions for differential equations constrained to live in a given convex set (see \cite{Brezis}). A detailed account for models in multi-population dynamics which fit into this setting is given in Section \ref{s:example}.

\subsection*{Our results.}
The goal of our paper is to extend the formulation of a PMP to the convex constrained setting. Our interest is motivated by the models we present in Section  \ref{s:example}, for which the optimality conditions in linear spaces available in literarure do not apply. The main technical tools are {\em (i)} a notion of differentiability for functionals defined on a convex subset $C$ of a Banach space $E$, {\em (ii)} a weak notion of local differentiability for functionals defined on $\mathcal{P}(C)$, and {\em (iii)} the introduction of a functional setting suitable for linearisation and for the derivation of the adjoint equation.

We start with the controlled  particle systems, which we recast in the form
\begin{equation}\label{intro-bol1}
\min_{u\in\U} \left\{\int_0^T L(t,c(t),u(t))\di t \right\}
\end{equation}
subject to
\begin{equation}\label{intro-may1}
\begin{cases}
\dd c(t)=A(t,c(t),u(t)) & \text{ in }(0,T], \\
c(0)=c_0\in C.
\end{cases}
\end{equation}
In \eqref{intro-bol1}, $\mathcal{U}$ is a weakly compact space of admissible controls, while the assumptions on the operator $A$ are listed in {\bf (HAode)} assure well-posedness of the state equation \eqref{intro-may1} constrained to $C$ and involve the notion of $C$-differentiability with respect to $c \in C$ (see also \cite{AFMS}). In Theorem \ref{PMP} we prove that the an optimal pair trajectory-control $(\co(t),\po(t))$ solves
\begin{equation*}
\label{intro-bol4}
\begin{cases}
\dis\dd \left(\begin{array}{c} \co(t) \\ \po(t) \end{array}\right) = J\Di_{(\co(t),\po(t))}\mathcal H(t,\co(t),\po(t),\uo(t)) & \text{ in } [0,T), \\
\co(0) = c_0 \in C, \\
\po(T) = \mathbf{0} \in E_C^*,
\end{cases}
\end{equation*}
on $C\times E_{C}^*$, where $E_{C} := \overline{\mathbb{R}( C - C)}$ is the topological closure of the vector subspace $\R(C - C):= \{ \alpha (c_{1} - c_{2}) : \alpha \in \R, \, c_{1}, c_{2} \in C\}$ in $E$. Furthermore, the optimal control maximises the Hamiltonian $\mathcal H(t,c(t),p(t),u(t)) = \left\langle p(t), A(t, c(t), u(t))\right\rangle - L(t, c (t), u(t))$, where the duality product is taken in $E_{C}^{*},E_C$. The proof is based on needle variations and on passing to the limit in the blow-up $(c_{\varepsilon} - c)/\varepsilon$, where $c_{\varepsilon}$ denotes the solution of the perturbed problem. Here some care is required, since a linear structure is not available and can only be partially replaced by convexity. In our setting tangent directions can be approximated only at first order by the perturbed trajectories, so that a careful estimate of the behavior of rest terms is in order. We refer to the proof of Theorems~\ref{teo1} and~\ref{PMP} for full details.

In Section \ref{s3} we turn our attention to the mean-field control problem
\begin{equation}
\label{intro-Bolinf-min}
\min_{u\in\mathcal U} \left\{\int_0^T L(t,\mu(t),u(t))\di t  \right\},
\end{equation}
subject to
\begin{equation}\label{intro-Wmayeq}
\begin{cases}
\dd \mu(t) +\mathrm{div}\left(A(t,\mu(t),\cdot,u(t,\cdot))\mu(t)\right)=0 & \text{ in }(0,T], \\
\mu(0)=\mu^0\in \mathcal{P}_c(C).
\end{cases}
\end{equation}
The set of admissible controls $\mathcal{U}$ has a closed-loop structure with respect to the state variable, which means that an admissible control $u$ may depend on both $t$ and $c$, and satisfies similar assumptions than the one considered in \cite{BonRos} to ensure compactness of controls. The operator $A$ is assumed to satisfy two sets of assumptions, namely {\bf (HA1)} and {\bf (HA2)}. The first is similar to {\bf (HAode)}, while the latter is crucial for writing a linearised equation in form of a nonlocal Cauchy problem~\eqref{lineq}. This requires, besides the already mentioned $C$-differentiability of $A$, a suitable notion of Wasserstein gradient with respect to $\mu$ introduced in Definition \ref{Wmudiff}, which is tailored for the functional setting at hand. Since the dynamics must live in $C$ and $A$ may be not linear in the control variable $u$, both assumptions on $A$ are slightly different from those of the Euclidean setting (see \cite{BonRos, PMPWassConst}).

The core of our arguments is contained in Proposition \ref{linearized}, where we perform the linearisation of the flow associated  to~\eqref{intro-Wmayeq}. Here is where our analysis significantly departs from \cite{BonFra, BonRos}. Indeed the states space lacks linearity and local compactness; furthermore we cannot in principle assume that the dependence of the velocity field $A$ on the closed-loop control $u$ is linear.

Also from a technical point of view the hardest part is contained in the proof of Proposition~\ref{linearized}. A crucial technical tool is a chain-rule with respect to the notion of Wasserstein differentiability, which we prove in Proposition~\ref{Wchainrule}, allowing for a first-order expansion of suitable compositions of flow maps close to the identity. This is heavily exploited in Theorem~\ref{mainres} to analyse the behavior of the perturbed solutions to~\eqref{intro-Wmayeq} arising from needle variations of the optimal control, as well as in the derivation of the  optimality conditions for the Hamiltonian associated to the problem, which is explicitly given in \eqref{Wham}. Our main result is contained in Theorem~\ref{mainresL}, where a PMP for \eqref{intro-Bolinf-min}--\eqref{intro-Wmayeq} is derived, coupling a maximality condition for the control $u$ with a Wasserstein Hamiltonian flow in the space $C \times E^*_{C}$. We remark that the derivation of such principle requires some assumptions on the involved (possibly infinite-dimensional) functional spaces. In particular, we need to assume $E$ to be separable and $E_{C}$ to be reflexive. The compliance of our setting with model examples inspired by~\cite{AAMS, ADEMS, AFMS, BFS, MS2020} and other recent contributions is discussed in Section~\ref{s:example}. The examples include leader-follower type dynamics, or in general multi-agent dynamics with  label switching, where the state space $\R^d \times \mathcal{P}(K)$ takes into account the position and the probability of each agent to belong to one element in the set $K$ of possible populations, and controlled replicator-type dynamics, where $\mathcal{P}(K)$ is a set of mixed strategies. We remark that the PMP provided by our contribution may be, for instance, a useful tool for the numerical results presented in \cite{AAMS}, which were instead based on model predictive control.

We conclude with the obvious remark that in the special case $C=\R^d$ one retrieves the known results obtained for instance in \cite{BonRos} for closed-loop controls and in \cite{BonFra} for open-loop controls.

\subsection*{Outlook.}
This work provides a general functional setting for identifying optimality conditions for agent-based systems with convex state space, where the evolution of the system is described by the continuity equation~\eqref{intro-Wmayeq}. Possible generalisations of the present framework may take into account evolutions driven by differential inclusions or stochastic effects, which have recently been considered in the Euclidean setting (see~\cite{ContInc} and~\cite{BonBon}, respectively). As pointed out in~\cite{LipReg} for systems with unconstrained state space, optimality conditions for the finite particle control problem~\eqref{intro-bol1}--\eqref{intro-may1} in combination with convexity assumptions on the Hamiltonian $\mathcal{H}$ may lead to Lipschitz regularity of optimal controls for the mean-field problem~\eqref{intro-Bolinf-min}--\eqref{intro-Wmayeq}. Such topics will be the focus of future investigation.

\section{Preliminaries and notation}
\label{s:preliminaries}

In a Banach space $(E, \| \cdot\|_{E})$, let $C$ be a closed convex subset of~$E$. For $R>0$ we define $B_R: = \{ e\in E : \|e\|_E\leq R\}$ and $B_{R,C}=B_R\cap C$. For any vector subspace $W$ of $E$ we denote by $\li(W;E)$ the Banach space of linear bounded operator from $W$ to $E$ and by $W^*$ the dual space of $W$, i.e. $W^*=\li(W;\R)$. Recall that, by density, each operator in $\li(W;E)$ uniquely extends to an operator in $\li(\overline{W};E)$. By $\langle \cdot, \cdot\rangle$ we denote the duality pairing between a vector space and its dual.
We denote by $E_C$ the topological closure of the vector subspace $\R(C-C):=\{ \alpha(c_1-c_2): \, \alpha\in \R, \, c_1,c_2\in C\}$ in $E$. We notice that $\R(C - C)$ is a vector space in view of the convexity of $C$. For $c\in C$, $E_c$ is the convex cone of directions defined as
$$
E_c:=\R_+(C-c)\subset E_C \subset E.
$$
We have the following definition of $C$-differentiability.

\begin{definition}
\label{d:cdiff}
Let $(F, \| \cdot\|_{F})$ be a normed space and let $f \colon C \to F$. We say that $f$ is~$C$-differentiable in $c \in C$ if there exists a linear operator $M \in \mathcal{L} (E_{C}; F)$ such that
\begin{displaymath}
\lim_{C\ni c' \to c}\, \frac{f(c') - f(c) - M[c' - c]}{\| c' - c\|_{E}} = 0\,.
\end{displaymath}
\end{definition}
We will denote the $C$-differential in $c \in C$ by $D_{c}f$. Notice that such map is a priori well-defined in $E_{c}$ and can be uniquely extended by linearity and density to a map in~$E_{C}$.

Given $(X,d)$ a separable Radon metric space, we denote by $\mathcal{P}(X)$ the family of all Borel probability measures on~$X$. For $p \geq 1$ we further consider
$$
\mathcal{P}_p(X):=\left\{\mu \in \mathcal{P}(X) : \int_X d(x,\bar{x})^p \di\mu(x)<+\infty \text{ for some }\bar{x}\in X  \right\}
$$
and $\mathcal{P}_c(X)$ the subset of $\mathcal{P}(X)$ of measures with compact support in $X$ recalling that the support is the closed set
$$
\supp(\mu)=\{x\in X : \mu(V)>0 \text{ for each neighborhood }V\text{ of }x  \}.
$$
If $X$ is contained in some Banach space $Y$, we define the $p$ momentum of $\mu\in\mathcal{P}(X)$ as
$$
m_p(\mu):=\left(\int_X \|x\|_Y^p \,\di \mu(x)\right)^\frac1p \qquad \mbox{for }p\geq 1.
$$
Let $(X_1,d_1)$ and $(X_2,d_2)$ be separable Radon metric spaces, we define for every $\mu^1\in \mathcal{P}(X_1)$ and $\mu^2\in\mathcal{P}(X_2)$ the transport plans with marginals $\mu^1$ and $\mu^2$
$$
\Gamma(\mu^1,\mu^2):=\left\{{\bm\gamma}\in \mathcal{P}(X_1\times X_2) : \pi_{\#}^{i}{\bm\gamma}=\mu^i \text{ for }i=1,2 \right\},
$$
where $\pi^i \colon X_1\times X_2 \to X_i$ is the projection on $X_i$ and $\pi_{\#}^{i}{\bm\gamma}\in \mathcal{P}(X_i)$ is the pushforward of ${\bm\gamma}$ through $\pi^i$. Note that $\Gamma(\mu^1,\mu^2)$ is a not empty and compact subset of $\mathcal{P}(X_1\times X_2)$ (see Remark 5.2.3 of \cite{AGS}).
We define the $p$-Wasserstein distance between two probability measures $\mu^1$ and $\mu^2$ in $\mathcal{P}_p(X)$ by
$$
W_p^p(\mu^1,\mu^2)= \min\left\{\int_{X\times X}d(x_1,x_2)^p \di{\bm\gamma}(x_1,x_2) : {\bm\gamma}\in \Gamma(\mu^1,\mu^2)\right\},
$$
it follows from Proposition 7.15 of \cite{AGS} that $\mathcal{P}_p(X)$ endowed with the $p$-Wasserstein distance is a separable metric space which is complete if $X$ is complete.

\section{Pontryagin maximum principle for optimal control problems in convex spaces}
\label{s1}

Let the following assumptions hold:
\begin{tcolorbox}
\begin{itemize}
\item $(E, \|\cdot\|_E)$ is a Banach space;
\item $C\subset E$ is a closed and convex subset of $E$;
\item $(U,d_U)$ is a compact metric space of admissible controls;
\item $\U:=\left\{u\colon[0,T]\to U \text{ such that } u(\cdot) \text{ is Lebesgue measurable} \right\}$.
\end{itemize}
\end{tcolorbox}

We introduce here the main set of assumptions for the operator $A$ appearing in~\eqref{intro-may1}.

\begin{tcolorbox}
{\bf (HAode)} Let $A(t,\cdot,\cdot) \colon C \times U \to E, t\in [0,T]$, be a family of operators satisfying the following properties:
\begin{itemize}
\item[$(i)$] for every $R>0$ there exists a constant $L_R > 0$ such that for every $c,\tilde{c}\in B_{R,C}$, $t\in [0,T]$ and $u\in U$
\begin{equation}\label{Clip}
\|A(t,c,u)-A(t,\tilde{c},u)\|_E\leq L_R\|c-\tilde{c}\|_E;
\end{equation}
\item[$(ii)$] there exists $M>0$ such that for every $c\in C$, $t\in[0,T]$ and $u\in U$ it holds
$$
\|A(t,c,u)\|_E \leq  M(1+\|c\|_E);
$$
\item[$(iii)$] for every $c\in C$ and $u\in U$ the map $t\mapsto A(t,c,u)$ belongs to $L^1([0,T];E)$;
\item[$(iv)$] for every $t\in [0,T]$ and $c\in C$ the map $u\mapsto A(t,c,u)$ is such that
$$
\|A(t,c,u)-A(t,c,\tilde{u})\|_E \leq \omega(d_U(u,\tilde{u})),
$$
where $\omega:[0,+\infty) \to [0,+\infty)$ is not dependent on $t$ and with $\lim_{s\to 0^+} \omega(s)=0$;
\item[$(v)$] for every $R>0$ there exists $\theta>0$ such that for every $t\in [0,T]$ and $u\in U$
$$
c\in C, \|c\|_E \leq R \Rightarrow c+\theta A(t,c,u)\in C;
$$
\item[$(vi)$] for every $c\in C$, $t\in [0,T]$ and $u\in U$, $A$ is $C$-differentiable at $c\in C$, i.e. there exists $B_{(t,c,u)}\in \li(E_C;E_C)$ such that
\begin{equation} \label{ACdiff}
\lim_{C\ni c'\to c} \dis\frac{\|A(t,c',u)-A(t,c,u)-B_{(t,c,u)}[c'-c]\|_E}{\|c'-c\|_E} = 0;
\end{equation}
\end{itemize}
\end{tcolorbox}

Note that $(v)$ implies $A(t,\cdot,u)\colon C \to E_c$ and that by \eqref{ACdiff} we have for every $e\in E_c$
\begin{equation}\label{Gat}
B_{(t,c,u)}[e]=\lim_{h\to 0^+} \dis\frac{A(t,c+he,u)-A(t,c,u)}{h}.
\end{equation}
For every $t\in [0,T]$ and $u\in U$, we denote the $C$-differential of $A$ in $c\in C$ by
$$
\Di_{c}A:=\{B_{(t,c,u)}\in \li(E_C;E_C) : B_{(t,c,u)} \text{ fullfils } \eqref{ACdiff}\}.
$$
Let us give some results related to the notion of $C$-differentiability given in $(vi)$.
\begin{lemma}
\label{ext-bnd}
Let $A$ satisfy $(vi)$. Then the map $\Di A \colon C \to \li(E_C;E_C)$ with $\Di A(c):=\Di_cA$ is single valued. Moreover, if $(i)$ holds, $\Di_{c(t)}A:=B_{(t,c(t),u(t))}\in L^\infty([0,T];\li(E_C;E_C))$ for every $c\in AC([0,T];C)$ and $u\in\U$.
\end{lemma}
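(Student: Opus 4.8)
The plan is to establish the two assertions separately. For \emph{single-valuedness}, fix $(t,c,u)$ and suppose $B_1,B_2\in\li(E_C;E_C)$ both satisfy \eqref{ACdiff}. Subtracting the two defining limits and using the triangle inequality, the difference $D:=B_1-B_2$ obeys $\lim_{C\ni c'\to c}\|D[c'-c]\|_E/\|c'-c\|_E=0$. I would then probe this along half-lines: for $e\in E_c=\R_+(C-c)$, $e\neq 0$, and $h>0$ small, convexity gives $c+he\in C$, and plugging $c'=c+he$ the quotient reduces by homogeneity to the constant $\|D[e]\|_E/\|e\|_E$, which must therefore vanish. Hence $D\equiv 0$ on the cone $E_c$. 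Since the linear span of $E_c$ is exactly $\R(C-C)$ — because any $c_1-c_2=(c_1-c)-(c_2-c)$ lies in $E_c-E_c$ — and $\R(C-C)$ is dense in $E_C$, continuity of $D$ forces $D=0$ on all of $E_C$, i.e. $B_1=B_2$.

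For the second assertion I would first note that a curve $c\in AC([0,T];C)$ is continuous, so its image $c([0,T])$ is compact, hence bounded, say $\|c(t)\|_E\leq R$ for all $t$. Combining the directional formula \eqref{Gat} with the local Lipschitz bound \eqref{Clip} gives the estimate: for $e\in E_{c(t)}$ and $h$ small enough that $c(t)+he\in B_{R+1,C}$, one has $\|A(t,c(t)+he,u(t))-A(t,c(t),u(t))\|_E\leq L_{R+1}\,h\,\|e\|_E$, and dividing by $h$ and letting $h\to 0^+$ yields $\|B_{(t,c(t),u(t))}[e]\|_E\leq L_{R+1}\|e\|_E$, uniformly in $t$. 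The passage from this estimate on the cone to a bound on the full operator norm in $\li(E_C;E_C)$ is then carried out through the canonical linearity-and-density extension of the $C$-differential described after Definition \ref{d:cdiff}; since $L_{R+1}$ depends only on $R$, the essential supremum over $t$ of $\|B_{(t,c(t),u(t))}\|_{\li(E_C;E_C)}$ is finite.

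It remains to establish measurability of $t\mapsto B_{(t,c(t),u(t))}$. Here I would again exploit the representation \eqref{Gat}: assumptions $(i)$, $(iii)$, $(iv)$ make $A$ a Carath\'eodory map, measurable in $t$ and continuous in $(c,u)$, so for each fixed direction $e$ and each fixed $h$ the map $t\mapsto\bigl[A(t,c(t)+he,u(t))-A(t,c(t),u(t))\bigr]/h$ is measurable, being the composition of a Carath\'eodory function with the measurable curves $c(\cdot)$ and $u(\cdot)$; taking the limit along rational $h\to 0^+$ shows that $t\mapsto B_{(t,c(t),u(t))}[e]$ is measurable for each admissible direction, which together with the uniform bound yields the required measurability of the operator-valued map. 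The \emph{main obstacle} I anticipate is precisely this passage from the cone $E_{c(t)}$ to the whole space $E_C$: the Lipschitz estimate only controls $B_{(t,c(t),u(t))}$ along directions $c'-c(t)$ pointing into $C$, so one must verify that the canonical extension keeps the operator norm essentially bounded uniformly in $t$, and simultaneously reconcile the $t$-dependence of the generating cone $E_{c(t)}$ with the measurability argument.
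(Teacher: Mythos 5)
Your first paragraph (single-valuedness) is correct and is essentially the paper's argument: the one-sided quotient \eqref{Gat} pins down the value on the cone $E_c$, the decomposition $c_1-c_2=(c_1-c)-(c_2-c)$ extends it by linearity to $\R(C-C)$, and density plus continuity finish on $E_C$. Your measurability remarks are an add-on the paper does not even attempt (its proof only establishes the uniform norm bound), and are not where the difficulty lies.

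The second paragraph, however, contains a genuine gap, and it is precisely at the point you yourself flag as ``the main obstacle'': the passage from the cone bound to a bound on the full operator norm is \emph{not} achieved by the ``canonical linearity-and-density extension''. What you actually proved is $\|B_{(t,c(t),u(t))}[e]\|_E\leq L_{R+1}\|e\|_E$ for $e\in E_{c(t)}$ only. For a general $v=\alpha(c_1-c_2)=\alpha(c_1-c(t))-\alpha(c_2-c(t))\in\R(C-C)$, linearity merely yields $\|B[v]\|_E\leq|\alpha|L_{R+1}\left(\|c_1-c(t)\|_E+\|c_2-c(t)\|_E\right)$, and this right-hand side is not comparable to $\|v\|_E=|\alpha|\,\|c_1-c_2\|_E$: the two generators may be large while their difference is small, and rescaling the representation ($c_i\mapsto c(t)+s(c_i-c(t))$, $\alpha\mapsto\alpha/s$) reproduces the same estimate, so no choice of representative repairs it. Each $B_{(t,c(t),u(t))}$ is of course bounded on $E_C$ by assumption $(vi)$, but with a constant that a priori depends on $t$, so the $L^\infty$ conclusion does not follow from your estimate. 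The paper closes exactly this hole by returning to $A$ itself rather than to the abstract operator: applying \eqref{Gat} to both directions simultaneously and using convexity of $C$,
\begin{equation*}
\Di_{c(t)}A[c_1-c(t)]-\Di_{c(t)}A[c_2-c(t)]
=\lim_{h\to 0^+}\frac{A(t,(1-h)c(t)+hc_1,u)-A(t,(1-h)c(t)+hc_2,u)}{h}\,,
\end{equation*}
where the two arguments of $A$ lie in $C$ and are at distance exactly $h\|c_1-c_2\|_E$; the Lipschitz bound \eqref{Clip} then controls the quotient by $L_R\|c_1-c_2\|_E$, giving $\|B[v]\|_E\leq l\|v\|_E$ on all of $\R(C-C)$ with $l$ depending only on $R$, after which density concludes. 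Without this (or an equivalent) device your argument establishes the uniform bound only on the moving cone $E_{c(t)}$, not in $\li(E_C;E_C)$.
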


\begin{proof}
First we prove that $\Di A$ is single valued. If $e\in E_c$, then, by \eqref{Gat}, $\Di_cA[e]$ is a singleton. Moreover, if $v\in \mathbb{R}(C-C)$, then $v=\alpha(c_1-c_2)=\alpha(c_1-c)-\alpha(c_2-c)$ for some $c_1,c_2\in C$ and $\alpha\in\R$. Hence, by linearity, we deduce
$$
\Di_cA[v]=\alpha \Di_cA[c_1-c]-\alpha \Di_cA[c_2-c],
$$
where $c_1-c, c_2-c\in E_c$. Therefore $\Di_cA[v]$ too is singleton and, by density, $\Di A$ is single valued. \\
Now let $c \in AC([0,T];C)$ and $u\in\U$. By continuity we have that $c(t)\in B_{R,C}$ for some $R>0$. Since for $h$ small enough we have $c(t)+he\in C\cap B_{2R}$ with $e\in E_{c(t)}$ (recall that $C$ is convex), it follows from \eqref{Clip} and \eqref{Gat} that $\|\Di_{c(t)}A[e]\|_E \leq l\|e\|_E$ for some $l\geq 0$ depending on $R$. This property can be extended for $v\in\mathbb{R}(C-C)$. Indeed, using the decomposition $v=\alpha(c_1-c_2)=\alpha(c_1-c(t))-\alpha(c_2-c(t))$, we have
\begin{eqnarray*}
\|\Di_{c(t)}A[v]\|_E &=&|\alpha|\left\|\Di_{c(t)}A[c_1-c(t)]-\Di_{c(t)}A[c_2-c(t)]\right\|_{E} \\
&\stackrel{\eqref{Gat}} =&|\alpha|\left\|\lim_{h\to 0^+}\frac{A(t,c(t)+h(c_1-c(t)),u)-A(t,c(t)+h(c_2-c(t)),u)}{h} \right\|_{E}\\
&=& |\alpha|\left\|\lim_{h\to 0^+}\frac{A(t,(1-h)c(t)+hc_1,u)-A(t,(1-h)c(t)+hc_2,u)}{h} \right\|_{E}\\
&\stackrel{(i)}\leq& l|\alpha|\|c_1-c_2\|_E =l\|v\|_E.
\end{eqnarray*}
Hence, using again a density argument and recalling that $E_C=\overline{\mathbb{R}(C-C)}$, we deduce that $\|\Di_{c(t)}A\|_{\li(E_C;E_C)}\leq l$ in $[0,T]$.
\end{proof}

From now on we use $\Di_cA$ instead of $B_{(t,c,u)}$. We define the adjoint operator $\Di_c^*A \in\li(E_C^*,E_C^*)$ of $\Di_c A$ as
$$
\langle \Di_c^*A[p],v\rangle :=\langle p, \Di_cA[v]\rangle \quad \forall p\in E_C^*, v\in E_C.
$$
\begin{remark}
\label{adjbnd}
By its definition
$$
\|\Di_c^*A\|_{\li(E_C^*,E_C^*)}=\|\Di_cA\|_{\li(E_C;E_C)}.
$$
Then it follows from the second statement of Lemma \ref{ext-bnd} that
\begin{equation}\label{duale}
\Di_{c(t)}^*A\in L^\infty([0,T];\li(E_C^*,E_C^*)),
\end{equation}
for every $c\in AC([0,T]; C)$ and $u\in \U$.
\end{remark}

\begin{tcolorbox}
{\bf (H$\varphi$ode)} Let $\varphi\colon  C \to \R$ be $C$-differentiable for every $c\in C$ with $C$-differential $\Di_c\varphi\in E_C^*$ defined as in $(vi)$, i.e.
$$
\lim_{C\ni c'\to c} \dis\frac{|\varphi(c')-\varphi(c)-\langle \Di_c\varphi,c'-c\rangle|}{\|c'-c\|_E} = 0.
$$
\end{tcolorbox}

Under the sets of assumptions {\bf (HAode)} and {\bf (H$\varphi$ode)}, we aim at finding necessary conditions for optimal solutions $(\co(t),\uo(t))\in AC([0,T]; C) \times \U$ of the following Mayer problem with free terminal point:
\begin{equation}\label{maxMay}
\min_{u\in \U} \varphi(c(T,u))
\end{equation}
subject to
\begin{equation}\label{may1}
\begin{cases}
\dd c(t)=A(t,c(t),u(t)) & \text{ in }(0,T], \\
c(0)=c_0\in C.
\end{cases}
\end{equation}
Notice that from the assumptions on $A$ and from Theorem I.4 of \cite{Brezis} we deduce that \eqref{may1} admits a unique  weak solution $c \in AC([0, T; C)$ for every control~$u$ and every initial condition $c_{0} \in C$.

We have the following result:
\begin{theorem}\label{teo1}
Consider the optimal control problem \eqref{maxMay}-\eqref{may1}, under the assumptions {\bf (HAode)} and {\bf (H$\varphi$ode)}. Let $\uo\in \U$ be an admissible control whose corresponding trajectory $\co\in AC([0,T]; C)$ is optimal. Let $p\colon [0,T] \to E_C^*$ the weak solution of the adjoint equation
\begin{equation*}\label{may2}
\begin{cases}
\dd p(t)=-\Di_{\co(t)}^*A[p(t)] & \text{ in } [0,T), \\
p(T)= -\Di_{\co(T)}\varphi.
\end{cases}
\end{equation*}
Then the maximality condition
\begin{equation*}\label{may3}
\langle p(t),A(t,\co(t),\uo(t))\rangle=\dis\max_{\omega\in U}\left\{\langle p(t),A(t,\co(t),\omega)\rangle\right\}
\end{equation*}
holds for almost everywhere $t$ in $[0,T]$.
\end{theorem}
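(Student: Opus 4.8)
The plan is to follow the classical Pontryagin scheme of needle variations, adapted to the convex, non-linear ambient space. Fix an element $\omega\in U$ and a time $\tau\in(0,T)$ which is simultaneously a Lebesgue point of $s\mapsto A(s,\co(s),\uo(s))$ and of $s\mapsto A(s,\co(s),\omega)$ (both lie in $L^1([0,T];E)$), and for $\eps>0$ small introduce the needle control
\[
u_\eps(t):=\begin{cases}\omega & t\in[\tau-\eps,\tau],\\ \uo(t)&\text{otherwise.}\end{cases}
\]
Since $u_\eps\in\U$, assumption $(v)$ together with Theorem I.4 of \cite{Brezis} guarantees a unique solution $c_\eps\in AC([0,T];C)$, which coincides with $\co$ on $[0,\tau-\eps]$. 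Writing the state equation in integral form and combining the Lipschitz bound $(i)$ with Gronwall's inequality yields $\|c_\eps(t)-\co(t)\|_E=O(\eps)$ uniformly on $[0,T]$, so that the blow-up $z_\eps:=(c_\eps-\co)/\eps$ is bounded. Evaluating the difference at $\tau$ and exploiting that $c_\eps(s)-\co(s)=O(\eps)$ on the needle interval, the Lebesgue point property gives $z_\eps(\tau)\to v_\tau:=A(\tau,\co(\tau),\omega)-A(\tau,\co(\tau),\uo(\tau))\in E_C$.

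On $[\tau,T]$ both controls equal $\uo$, and here I would linearise the equation for $z_\eps$ using the $C$-differentiability $(vi)$. Writing
\[
A(s,c_\eps(s),\uo(s))-A(s,\co(s),\uo(s))=\Di_{\co(s)}A[c_\eps(s)-\co(s)]+R_\eps(s),
\]
assumption $(vi)$ applied with $c'=c_\eps(s)\to\co(s)$ (both in $C$) shows $\|R_\eps(s)\|_E=o(\|c_\eps(s)-\co(s)\|_E)$ pointwise, while $(i)$ and the operator bound of Lemma \ref{ext-bnd} give $\|R_\eps(s)/\eps\|_E\le C$; dominated convergence then yields $\int_\tau^T\|R_\eps(s)/\eps\|_E\,\di s\to0$. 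Comparing $z_\eps$ with the solution $v\in AC([\tau,T];E_C)$ of the linearised Cauchy problem $\dd v=\Di_{\co(t)}A[v]$, $v(\tau)=v_\tau$ (well posed since $\Di_{\co(\cdot)}A\in L^\infty([0,T];\li(E_C;E_C))$), and invoking Gronwall once more, I obtain $z_\eps\to v$ uniformly on $[\tau,T]$; in particular $z_\eps(T)\to v(T)$ in $E_C$. This passage to the limit in the blow-up is the delicate point, and I expect it to be the main obstacle: there is no linear structure, and the first-order expansion is legitimate only because the perturbed trajectories remain in $C$ and converge to $\co$, so that the rest terms can be dominated and shown to vanish.

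Optimality of $\uo$ gives $\varphi(c_\eps(T))\ge\varphi(\co(T))$ for every $\eps$. Dividing by $\eps$, using the $C$-differentiability of $\varphi$ from {\bf (H$\varphi$ode)} (again legitimate since $c_\eps(T),\co(T)\in C$ with $c_\eps(T)\to\co(T)$, and since the remainder divided by $\eps$ is $o(1)$), and letting $\eps\to0^+$, the continuity of $\Di_{\co(T)}\varphi\in E_C^*$ paired against $z_\eps(T)\to v(T)$ produces
\[
0\le\langle\Di_{\co(T)}\varphi,v(T)\rangle.
\]
To transfer this terminal inequality back to time $\tau$, I would differentiate $t\mapsto\langle p(t),v(t)\rangle$ on $[\tau,T]$: since $p$ and $v$ are absolutely continuous and the duality pairing is bilinear and continuous, the product rule together with the adjoint relation $\langle\Di_{\co(s)}^*A[p(s)],v(s)\rangle=\langle p(s),\Di_{\co(s)}A[v(s)]\rangle$ makes this map constant. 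Recalling $p(T)=-\Di_{\co(T)}\varphi$, this gives
\[
\langle p(\tau),v_\tau\rangle=\langle p(T),v(T)\rangle=-\langle\Di_{\co(T)}\varphi,v(T)\rangle\le0,
\]
which, by the definition of $v_\tau$, is exactly $\langle p(\tau),A(\tau,\co(\tau),\omega)\rangle\le\langle p(\tau),A(\tau,\co(\tau),\uo(\tau))\rangle$.

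Finally, to upgrade this to the stated maximality for almost every $t$ and every $\omega\in U$, I would pick a countable dense set $\{\omega_k\}\subset U$ (available since $U$ is compact, hence separable), run the argument above for each $\omega_k$ on its own full-measure set of admissible $\tau$, and intersect these sets. On the resulting full-measure set the inequality holds for all $\omega_k$ simultaneously, and the modulus-of-continuity assumption $(iv)$ together with the boundedness of $p(\tau)\in E_C^*$ allows one to pass to an arbitrary $\omega\in U$ by density. This yields the maximality condition at almost every $t\in[0,T]$.
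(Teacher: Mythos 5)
Your proposal is correct and follows essentially the same route as the paper: needle variation, Gr\"onwall bounds on the blow-up $(c_\eps-\co)/\eps$, identification of its limit with the solution $v$ of the linearised equation via a dominated rest-term estimate, the terminal inequality $\langle \Di_{\co(T)}\varphi, v(T)\rangle \geq 0$ from optimality, and constancy of $t\mapsto\langle p(t),v(t)\rangle$ to transport it back to $\tau$. Even your final density step (countable dense $\{\omega_k\}\subset U$, intersection of full-measure sets of Lebesgue points, passage to arbitrary $\omega$ via assumption $(iv)$) mirrors the remark the paper makes to render the set of admissible times $\tau$ independent of $\omega$.
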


\begin{proof}
Fix any time $\tau\in (0,T]$ and any admissible control value $\omega\in U$. For $\varepsilon>0$ sufficiently small, we consider the following function called needle variation:
$$
u_\eps(t)=\begin{cases} \omega & \text{ if }t\in[\tau-\eps,\tau], \\
\uo(t) & \text{ otherwise}.
\end{cases}
$$
It follows from the assumptions on $A$ and from Theorem I.4 of \cite{Brezis} that
\begin{equation}\label{may52bis}
\begin{cases}
\dd c(t)=A(t,c(t),u_\eps(t)) & \text{ in }(0,T] \\
c(0)=c_0\in C
\end{cases}
\end{equation}
has a unique weak solution $c_\eps(t)\in C$ for a.e. $t\in [0,T]$. By its definition $c_\eps(t)=\co(t)$ for a.e. $t\in [0,\tau-\eps]$. Now we prove that $c_\eps(t)$ converges to $\co(t)$ uniformly in $[0,T]$. By assumptions on $A$ we deduce that $A(t,\co(t),\uo(t))$, $A(t,\co(t),u_\eps(t))$ and $A(t,c_\eps(t), u_\eps(t))$ are integrable on $[0,T]$. Moreover $R>0$ depending only on $c_0$ and $T$ exists such that $\|\co(t)\|_E + \|c_\eps(t)\|_E \leq R$ in $[0,T]$. Hence, we have
\begin{eqnarray}\label{may52}
\|\co(t)-c_\eps(t)\|_E &\stackrel{\eqref{may1},\eqref{may52bis}}\leq& \int_0^T \|A(s,\co(s),\uo(s))-A(s,\co(s),u_\eps(s))\|_E \di s \nonumber \\
&& + \int_0^T \|A(s,\co(s),u_\eps(s))-A(s,c_\eps(s),u_\eps(s))\|_E \di s \nonumber \\
&\stackrel{\eqref{Clip}}\leq & \int_0^T \|A(s,\co(s),\uo(s))-A(s,\co(s),u_\eps(s))\|_E \di s \nonumber \\
&&+ L_R\int_{0}^{T} \|\co(s)-c_\eps(s)\|_E \di s.
\end{eqnarray}
Since $u_\eps$ converges to $\uo$ in $L^1([0,T];U)$ as $\eps$ tends to $0^+$, by $(iv)$ we obtain that $A(t,\co(t),u_\eps(t))$ converges to $A(t,\co(t),\uo(t))$ almost everywhere in $[0,T]$ as $\eps$ tends to $0^+$. Using $(ii)$ we have that $\|A(s,\co(s),\uo(s))-A(s,\co(s),u_\eps(s))\|_E \leq 2M(1+R)$ for some $M>0$. Then applying the Lebesgue theorem we obtain that $A(t,\co(t),u_\eps(t))$ converges to $A(t,\co(t),\uo(t))$ in $L^1([0,T];E)$ as $\eps$ tends to $0^+$. This, together with \eqref{may52}, implies
\begin{equation*}
\|\co(t)-c_\eps(t)\|_E \leq \delta_\eps + L_R\int_{0}^{T} \|\co(s)-c_\eps(s)\|_E \di s,
\end{equation*}
where $\delta_\eps \to0$ as $\eps \to0^+$. Applying the integral formulation of the Gr\"onwall inequality we get
$$
\|\co(t)-c_\eps(t)\|_E \leq \delta_\eps \mathrm{e}^{L_RT},
$$
whence
\begin{equation}\label{may52tris}
\lim_{\eps\to 0^+} c_\eps(t) = \co(t) \qquad \text{uniformly in }[0,T].
\end{equation}

Note that, since $A(t,\co(t),\omega)$ and $A(t,\co(t),\uo(t))$ are integrable in $[0,T]$, then a.e. time $\tau$ in $[0,T]$ is one of their Lebesgue points. From now on $\tau$ is a time with such property. Notice that such set of Lebesgue points can be made independent of~$\omega \in U$, since $U$ is a compact metric space. Indeed, $U$ is also separable. Hence, it would be enough to fix a countable dense set $D$ of~$U$, construct the Lebesgue points $\tau \in [0, T]$ of $t\mapsto A(t, \co(t), \omega)$ for every $\omega \in D$. Such points would be of Lebesgue type also for $t \mapsto A(t, \co(t), \omega)$ for any $\omega \in U$ by the assumption {\bf (HAode)}-$(iv)$. It follows that a positive $\delta_\eps$ tending to $0$ as $\eps$ tends to $0$ exists such that
\begin{eqnarray*}
&&\dis\left\|\frac{c_\eps(\tau)-\co(\tau)}{\eps} - \left(A(\tau,\co(\tau),\omega) - A(\tau,\co(\tau),\uo(\tau))\right)\right \|_E  \\ &&\leq \frac{1}{\eps}\int_{\tau-\eps}^{\tau}\|A(s,c_\eps(s),\omega)-A(s,\co(s),\omega)\|_E \di s + \left\|\frac{1}{\eps}\int_{\tau-\eps}^{\tau}A(s,\co(s),\omega)\di s - A(\tau,\co(\tau),\omega)\right\|_E  \\
&& +\left\| A(\tau,\co(\tau),\uo(\tau))- \frac{1}{\eps}\int_{\tau-\eps}^{\tau}A(s,\co(s),\uo(s))\di s \right\|_E \\
&& \stackrel{\eqref{Clip}}\leq L_R \|\co-c_\eps\|_{L^\infty([0,T];E)} + \delta_\eps.
\end{eqnarray*}
Therefore, by \eqref{may52tris}, we obtain
\begin{equation}\label{may53}
\lim_{\eps\to 0^+} \dis\frac{c_\eps(\tau)-\co(\tau)}{\eps}= A(\tau,\co(\tau),\omega) - A(\tau,\co(\tau),\uo(\tau)),
\end{equation}
whence we get $A(\tau,\co(\tau),\omega) - A(\tau,\co(\tau),\uo(\tau)) \in \overline{E_{\co(\tau)}}$.

Now we consider the following equation a priori defined in $E_C$
\begin{equation}\label{may4}
\begin{cases}
\dd v(t)=\Di_{\co(t)}A[v(t)] & \text{ in }(\tau,T], \\
v(\tau)=A(\tau,\co(\tau),\omega) - A(\tau,\co(\tau),\uo(\tau)).
\end{cases}
\end{equation}
By assumption $(vi)$ and Lemma~\ref{ext-bnd}, using Theorem I.4 of \cite{Brezis} we infer the existence of a unique weak solution $v\in AC([\tau,T]; E_C)$ such that $\dd v\in L^1([\tau,T];E_C)$. We want to prove that
\begin{equation}\label{may5}
v(t)=\lim_{\eps\to 0^+} \frac{c_\eps(t)-\co(t)}{\eps} \quad \text{uniformly in }[\tau,T],
\end{equation}
which in particular implies that $v(t)\in \overline{E_{\co(t)}}$ a.e. $t\in [\tau,T]$. Let us start by noting that $c_\eps(t)$ and $\co(t)$ in $[\tau,T]$ solve the same equation (i.e. \eqref{may1} with $u(t)$ replaced by $\uo(t)$) with $c_\eps(\tau)$ and $\co(\tau)$ respectively as initial datum. We define
$$
v_\eps(t):=\frac{c_\eps (t)-\co(t)}{\eps} \quad \text{for }t\in [\tau,T].
$$
Hence by its definition $v_\eps(t)\in E_{\co(t)}$ for every $t\in [\tau,T]$. Thus \eqref{may5} is equivalent to
\begin{equation}\label{may51}
\lim_{\eps\to 0^+}\left\|v_\eps (t)- v(t) \right\|_{L^{\infty}([\tau,T];E)}=0.
\end{equation}
Moreover by \eqref{may53} $v_\eps(\tau)$ converges to $v(\tau)$ in $E$. Now, recalling that $\|c_\eps(t)\|_E+\|\co(t)\|_E \leq R$ in $[0,T]$ and denoting by $L$ the constant given by \eqref{Clip} (we omit the subscript R), by continuity with respect to the initial data (see Theorem 1 of \cite{AAMS}) we obtain for $t\in [\tau,T]$
\begin{equation}
\label{may54}
\|v_\eps(t)\|_E \leq \mathrm{e}^{LT}\|v_{\eps}(\tau)\|_E \stackrel{\eqref{may53}}\leq 2\mathrm{e}^{LT}\|v(\tau)\|_E.
\end{equation}
Now we note that by its definition $v_\eps(t)$ satisfies
\begin{equation}\label{may54bis}
\dd v_\eps(t)= \frac{1}{\eps}\left(A(t,c_\eps(t),\uo(t))-A(t,\co(t),\uo(t))\right)=\Di_{\co(t)}A[v_\eps(t)]+r_\eps(t),
\end{equation}
where
$$
r_\eps(t):= \frac{1}{\eps}\left(A(t,c_\eps(t),\uo(t))-A(t,\co(t),\uo(t))\right)-\Di_{\co(t)}A[v_\eps(t)].
$$
It follows from \eqref{may52tris}, \eqref{ACdiff} and \eqref{may54} that for every $t\in (\tau,T]$
$$
\lim_{\varepsilon \to 0} \|r_\eps(t)\|_E=\lim_{\varepsilon \to 0}\frac{\|A(t,c_\eps(t),\uo(t))-A(t,\co(t),\uo(t))-\Di_{\co(t)}A[c_\eps(t)-\co(t)]\|_E}{\|c_\eps(t)-\co(t)\|_E}\|v_\eps(t)\|_E = 0.
$$
Moreover by Lemma \ref{ext-bnd} we have
\begin{eqnarray*}
\|r_\eps(t)\|_E &\stackrel{\eqref{may54bis}}\leq & \frac{1}{\eps}\|A(t,c_\eps(t),\uo(t))-A(t,\co(t),\uo(t))\|_E + \|\Di_{\co(t)}A[v_\eps(t)]\|_E \\
&\stackrel{\eqref{Clip}} \leq & \left(L + \|\Di_{\co(t)}A\|_{L^\infty([0,T];\li(E_C;E_C))}\right)\|v_\eps(t)\|_E.
\end{eqnarray*}
Hence using \eqref{may54} and applying the Lebesgue theorem we obtain that $r_\eps \to 0$ in $L^1([\tau,T];E)$ as $\eps\to 0^+$. Therefore, recalling that $v$ solves \eqref{may4} and using again Lemma \ref{ext-bnd}, we obtain
\begin{eqnarray}\label{may56}
\|v_\eps(t)-v(t)\|_E &\leq& \|v_\eps(\tau)-v(\tau)\|_E + \left\| \int_\tau^t \Di_{\co(s)}A[v_\eps(s)-v(s)] \di s \right\|_E + \left\|\int_\tau^t r_\eps(s)\di s\right\|_E \nonumber \\
&\stackrel{\eqref{may53}}\leq & \delta_\eps + \|\Di_{\co(t)}A\|_{L^\infty([0,T];\li(E_C;E_C))}\int_\tau^t \|v_\eps(s)-v(s)\|_E \di s,
\end{eqnarray}
where $\delta_\eps: = \|v_\eps(\tau)-v(\tau)\|_E + \|r_\eps\|_{L^1([\tau,T];E)} \to 0$ as $\eps \to 0^+$. Since $\|v_\eps(t)-v(t)\|_E$ is a continuous function from $[\tau,T]$ to $\R$, by \eqref{may56}, applying the integral form of the Gr\"onwall inequality and recalling that $\|\Di_{\co(t)}A\|_{L^\infty([0,T];\li(E_C;E_C))}\leq L$ it follows that
$$
\|v_\eps(t)-v(t)\|_E\leq \delta_\eps \mathrm{e}^{LT}.
$$
This implies \eqref{may51} and, consequentially, \eqref{may5}.

Since the control $\uo(t)$ is optimal in $\U$, we deduce for every $\eps >0$
\begin{equation}\label{may8}
\varphi(\co(T))\leq \varphi(c_\eps(T)).
\end{equation}
Moreover by \eqref{may5} we have $c_\eps(T)=\co(T)+\eps v(T) + o(\eps)$. Hence, recalling that $\varphi$ is $C$-differentiable and that
$$
c_\eps(T)-\co(T)=\eps v(T) + o(\eps)\in E_{\co(t)}
$$
we have
\begin{eqnarray}\label{may9}
\varphi(c_\eps(T))-\varphi(\co(T)) &= &\varphi(\co(T)+\eps v(T) + o(\eps))-\varphi(\co(T))\nonumber \\
& = &\langle \Di_{\co(T)}\varphi, \eps v(T) + o(\eps) \rangle + o(\eps).
\end{eqnarray}
Therefore
\begin{equation}\label{may7}
0 \stackrel{\eqref{may8}}\leq \lim_{\eps\to 0^+}\dis\frac{\varphi(c_\eps(T))- \varphi(\co(T))}{\eps} \stackrel{\eqref{may9}}= \langle \Di_{\co(T)}\varphi,v(T)\rangle.
\end{equation}

Now we consider the adjoint equation of \eqref{may4} which transports $p(T)=\Di_{\co(T)}\varphi\in E_C^* $ backward in time, i.e.
\begin{equation}\label{may6}
\begin{cases}
\dd p(t) = -\Di_{\co(t)}^*A[p(t)] & \text{ in }[\tau,T), \\
p(T) = -\Di_{\co(T)}\varphi\in E_C^*.
\end{cases}
\end{equation}
It follows from \eqref{duale} and Theorem I.4 of \cite{Brezis} that $p(t)\in AC([\tau,T];E_C^*)$ is the unique weak solution of \eqref{may6}. Then $\dd p\in L^1([\tau,T];E_C^*)$. The density of $C^1([\tau,T];E_C)$ and $C^1([\tau,T];E_C^*)$ in $W^{1,1}([\tau,T];E_C)$ and $W^{1,1}([\tau,T];E_C^*)$ respectively implies that the function $\langle p(\cdot),v(\cdot)\rangle \colon [\tau,T] \to \R$ is weak differentiable and its weak derivative is
$$
\dd \langle p(t),v(t)\rangle =\langle \dd p(t),v(t)\rangle+ \langle p(t),\dd v(t)\rangle.
$$
Therefore, using equations \eqref{may4} and \eqref{may6} and the definition of adjoint operator, we obtain
$$
\dd \langle p(t),v(t)\rangle = 0 \quad \text{ a.e. } t\in [\tau,T].
$$
This implies that $\langle p(t),v(t)\rangle$ is constant in $[\tau,T]$, more precisely
\begin{eqnarray*}
0 &\stackrel{\eqref{may7},\eqref{may6}} \geq &\langle p(T),v(T)\rangle =\langle p(t),v(t)\rangle =\langle p(\tau),v(\tau)\rangle \\
&\stackrel{\eqref{may4}}=&\langle p(\tau),A(\tau,\co(\tau),\omega) - A(\tau,\co(\tau),\uo(\tau))\rangle
\end{eqnarray*}
for a.e. $\tau\in [0,T]$. Since $\omega\in U$ is arbitrary, we have that
$$
\langle p(\tau),A(\tau,\co(\tau),\uo(\tau))\rangle =\dis\max_{\omega\in U}\left\{\langle p(\tau), A(\tau,\co(\tau),\omega)\rangle \right\} \quad \text{ for a.e. }\tau\in [0,T].
$$
\end{proof}

Now we focus on the Bolza problem with running cost $L$, i.e.
\begin{equation}\label{bol1}
\min_{u\in\U} \left\{\int_0^T L(t,c(t),u(t))\di t \right\}
\end{equation}
subject to \eqref{may1}. We assume:

\begin{tcolorbox}
{\bf (HLode)} Let $L\colon [0,T]\times C \times U \to \R$ be a map satisfying:
\begin{itemize}
\item[$(a)$] for every $R>0$ there exists a constant $L_R> 0$ such that for every $c,\tilde{c} \in B_{R,C}$, $t\in [0,T]$ and $u\in U$
\begin{equation*}
|L(t,c,u)-L(t,\tilde{c},u)|\leq L_R\|c-\tilde{c}\|_E;
\end{equation*}
\item[$(b)$] for every $c\in C$ and $u\in U$ the map $t\mapsto L(t,c,u)$ belongs to $L^1([0,T];\R)$;
\item[$(c)$] for every $t\in [0,T]$ and $c\in C$ the map $u\mapsto L(t,c,u)$ is such that
$$
\|L(t,c,u)-L(t,c,\tilde{u})\|_E \leq \omega(d_U(u,\tilde{u})),
$$
where $\omega \colon [0,+\infty) \to [0,+\infty)$ is not dependent on $t$ and with $\lim_{s\to 0^+} \omega(s)=0$;
\item[$(d)$] for every $c\in C$, $t\in [0,T]$ and $u\in U$, $L$ is $C$-differentiable at $c\in C$ with $C$-differential $\Di_cL\in \li(E_C;\R)$.
\end{itemize}
\end{tcolorbox}

Under these assumptions on $L$ with the same argument contained in Lemma \ref{ext-bnd} we have that $\Di_cL$ is single valued in $E_C$ and $\Di_{c(t)}L\in L^\infty([0,T];E_C^*)$ for every $c\in AC([0,T];C)$ and $u\in\U$. \\

We can rewrite the Bolza problem in Mayer form introducing the auxiliary variable
$$
c_{au}(t)=\int_0^t L(s,c(s),u(s))\di s,
$$
with $c_{au}(0)=0$. We introduce the following notations:
\begin{eqnarray}\label{notbol}
&& \cc:= \left(
\begin{array}{c}
c \\
c_{au}
\end{array}
\right)\in C\times \R,
\quad
\vv:= \left(
\begin{array}{c}
v \\
v_{au}
\end{array}
\right)\in E_C\times \R, \quad
\pp:= \left(
\begin{array}{c}
p \\
p_{au}
\end{array}
\right)\in E_C^*\times \R, \nonumber \\
&&
\underline{A}(t,\cc,u) := \left(
\begin{array}{c}
A(t,c,u) \\
L(t,c,u)
\end{array}
\right).
\end{eqnarray}
Since $A$ and $L$ are $C$-differentiable, it follows that $\underline{A}\colon  [0,T]\times C\times \R \times U \to E_C$ is $C$-differentiable and
\begin{equation}
\label{DAau}
\Di_{\cc}\underline{A}:=  \left(
\begin{array}{cc}
\Di_{c}A & 0 \\
\Di_{c}L & 0
\end{array}
\right)\in \li(E_C\times \R; E_C\times \R).
\end{equation}
Finally, we denote by $\Di_{\cc}^*\underline{A}$ the adjoint operator of $\Di_{\cc}\underline{A}$. Note that for every $c\in AC([0,T];C)$ and $u\in\U$, since $\Di_{\cc(t)}\underline{A}\in L^\infty([0,T];\li(E_C\times \R; E_C\times \R))$, with the same argument of Remark \ref{adjbnd} we obtain $\Di_{\cc(t)}^*\underline{A}\in L^\infty([0,T];\li(E_C^*\times \R;E_C^*\times \R)$. Recalling that $A(t,c,u)\in E_c \subset E_C$, we define $\mathcal H\colon [0,T] \times C \times E_C^* \times U \to \R$ as
\begin{equation}\label{defHam}
\mathcal{H}(t,c,p,u)=\langle p, A(t,c,u) \rangle - L(t,c,u).
\end{equation}
By its definition and since $A$ and $L$ are $C$-differentiable, it follows that $\mathcal H$ is $C$-differentiable at $c\in C$ and its $C$-differential $\Di_c\mathcal H\in E_C^*$ has the form
\begin{equation*}
\Di_c\mathcal H[v]=\langle p, \Di_cA[v] \rangle - D_cL[v] = \langle \Di_c^*A[p],v \rangle - D_cL[v] \qquad \forall v\in E_C.
\end{equation*}
Moreover $\mathcal H$ is Fr\'echet-differentiable with respect to $p$ and $\Di_p\mathcal H = A(t,c,u)\in E_C \subset E_C^{**}$. It follows that $\mathcal H$ is differentiable in $C\times E_C^*$ and
\begin{equation}
\label{DHdoppio}
\Di_{(c,p)}\mathcal H(t,c,p,u) = \left(\begin{array}{c} \Di_c^*A[p] - D_cL \\ A(t,c,u) \end{array}\right) \in E_C^* \times E_C.
\end{equation}
In this way the Bolza problem \eqref{bol1}-\eqref{may1} is equivalent to the Mayer problem
\begin{equation}\label{bol2}
\min_{u\in\U} c_{au}(T,u)
\end{equation}
subject to
\begin{equation}\label{bol3}
\begin{cases}
\dd \cc(t) = \underline{A}(t,\cc(t),u(t)) & \text{ in }(0,T], \\
\cc(0)=\left(\begin{array}{c} c_0 \\0 \end{array}\right)\in C\times \R .
\end{cases}
\end{equation}
Thanks to the assumptions on $A$ and $L$ we can apply Theorem \ref{teo1} to problem \eqref{bol2}-\eqref{bol3} obtaining the following result.

\begin{theorem}
\label{PMP}
Under the assumptions {\bf (HAode)} and {\bf (HLode)}, let $\uo\in \U$ be an admissible control whose corresponding trajectory $\co\in AC([0,T]; C)$ is optimal for problem \eqref{bol1}-\eqref{may1}. Then there exists $\po\in AC([0,T]; E_C^*)$ such that $(\co,\po)$ solves in distributional sense
\begin{equation*}\label{bol4}
\begin{cases}
\dis\dd \left(\begin{array}{c} \co(t) \\ \po(t) \end{array}\right) = J\Di_{(\co(t),\po(t))}\mathcal H(t,\co(t),\po(t),\uo(t)) & \text{ in } [0,T), \\
\co(0) = c_0 \in C, \\
\po(T) = \mathbf{0} \in E_C^*,
\end{cases}
\end{equation*}
where $\mathcal H$ is defined by \eqref{defHam} and $J\colon E_C^* \times E_C \ni (T,e) \mapsto (e,-T)\in E_C\times E_C^*$. Moreover the maximality condition
\begin{equation*}\label{bol5}
\mathcal{H}(t,\co(t),\po(t),\uo(t)) =\dis\max_{\omega\in U} \mathcal{H}(t,\co(t),\po(t),\omega)
\end{equation*}
holds for almost everywhere $t$ in $[0,T]$.
\end{theorem}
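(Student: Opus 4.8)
The plan is to deduce Theorem~\ref{PMP} directly from Theorem~\ref{teo1}, applied to the Mayer reformulation \eqref{bol2}--\eqref{bol3} of the Bolza problem. Concretely, I would work in the product Banach space $E\times\R$ with the closed convex constraint $C\times\R\subset E\times\R$, whose associated space of directions is $E_C\times\R$; the running field is the extended operator $\underline{A}$ from \eqref{notbol}, and the terminal cost is the projection $\varphi(\cc)=c_{au}$. Since $c_{au}(T,u)=\int_0^T L(t,c(t),u(t))\,\di t$ for every admissible $u$, the control $\uo$ is optimal for \eqref{bol2} if and only if it is optimal for \eqref{bol1}, and the optimal trajectory of \eqref{bol3} has the form $\cc=(\co,c_{au})$ with $\co$ the given optimal Bolza trajectory.

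The first step is to check that the hypotheses of Theorem~\ref{teo1} transfer to this extended setting. Properties $(i)$--$(iv)$ of {\bf (HAode)} for $\underline{A}$ follow by combining the corresponding bounds for $A$ with $(a)$--$(c)$ of {\bf (HLode)}, while the $C$-differentiability $(vi)$ is exactly \eqref{DAau}. The only point deserving attention is the viability condition $(v)$: since the second factor of the constraint is the whole of $\R$, the requirement $\cc+\theta\,\underline{A}(t,\cc,u)\in C\times\R$ reduces to $c+\theta A(t,c,u)\in C$, which is $(v)$ for $A$, the condition on the $c_{au}$-component being vacuous. Finally, $\varphi(\cc)=c_{au}$ is linear and hence trivially $C$-differentiable, with $\Di_{\cc}\varphi=(\mathbf 0,1)\in E_C^*\times\R$, so {\bf (H$\varphi$ode)} holds.

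With the hypotheses verified, Theorem~\ref{teo1} yields a costate $\pp=(p,p_{au})\in AC([0,T];E_C^*\times\R)$ solving $\dd\pp(t)=-\Di_{\cc(t)}^*\underline{A}[\pp(t)]$ with terminal datum $\pp(T)=-\Di_{\cc(T)}\varphi=(\mathbf 0,-1)$. Exploiting the block structure \eqref{DAau} of $\Di_{\cc}\underline{A}$, the $c_{au}$-component of the adjoint equation is simply $\dd p_{au}(t)=0$, so that $p_{au}\equiv-1$ on $[0,T]$. Substituting this value into the $p$-component and comparing with \eqref{DHdoppio}, I would then set $\po:=p$ and observe that the state equation \eqref{may1} reproduces exactly the first block of $J\Di_{(\co,\po)}\mathcal H$, while the reduced adjoint equation $\dd p=-\Di_{\co}^*A[p]+\Di_{\co}L$ reproduces the second block; together they give the asserted Hamiltonian system, with initial datum $\co(0)=c_0$ and terminal datum $\po(T)=\mathbf 0$.

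It remains to translate the maximality condition. Theorem~\ref{teo1} furnishes
$$
\langle\pp(t),\underline{A}(t,\cc(t),\uo(t))\rangle=\max_{\omega\in U}\langle\pp(t),\underline{A}(t,\cc(t),\omega)\rangle
\quad\text{for a.e. }t\in[0,T],
$$
and expanding the pairing as $\langle p,A\rangle+p_{au}L$ and inserting $p_{au}\equiv-1$ turns both integrands into $\langle\po(t),A(t,\co(t),\cdot)\rangle-L(t,\co(t),\cdot)=\mathcal H(t,\co(t),\po(t),\cdot)$, which is precisely the claimed maximality for $\mathcal H$. I do not expect any genuine analytic obstacle here: the needle-variation and blow-up arguments were already carried out in Theorem~\ref{teo1}, so the only care required is the bookkeeping of the adjoint block structure and of the signs in the terminal conditions.
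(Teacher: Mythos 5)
Your proposal is correct and follows essentially the same route as the paper: reduce the Bolza problem to the Mayer form \eqref{bol2}--\eqref{bol3} via the auxiliary variable $c_{au}$, apply Theorem~\ref{teo1} with $\varphi(\cc)=c_{au}$ so that $\pp(T)=(\mathbf 0,-1)$, use the block structure \eqref{DAau} of $\Di_{\cc}\underline{A}$ to get $\dd\po_{au}\equiv 0$ and hence $\po_{au}\equiv-1$, and substitute back to recover the Hamiltonian system and the maximality condition for $\mathcal H$. Your explicit check that the viability condition {\bf (HAode)}-$(v)$ is vacuous in the $\R$-factor is a detail the paper leaves implicit, but it does not change the argument.
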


\begin{proof}
Let us start by noting that, if $\uo\in \U$ be an admissible control whose corresponding trajectory $\co\in AC([0,T]; C)$ is optimal for problem \eqref{bol1}-\eqref{may1}, then by \eqref{notbol} and \eqref{bol1} we have that $\underline\co = (\co, \co_{au}) \in AC([0,T]; C\times \R)$ is optimal for problem \eqref{bol2}-\eqref{bol3}. Since $\varphi(\cc(T,u))=c_{au}(T,u)$, we deduce that $\displaystyle \Di_{\underline\co(T)}\varphi =\left(\begin{array}{c} \mathbf 0 \\ 1 \end{array}\right)\in E_C^*\times \R $. Hence it follows from Theorem \ref{teo1} that
\begin{eqnarray}
\label{bol6}
&&\left\langle \underline\po(t), \underline{A}(t,\underline\co(t),\uo(t)) \right\rangle \stackrel{\eqref{notbol}}=\langle \po(t),A(t,\co(t),\uo(t))\rangle + \po_{au}(t)L(t,\co(t),\uo(t)) \nonumber \\
&&=\dis\max_{\omega\in U}\left\{\langle \po(t),A(t,\co(t),\omega)\rangle + \po_{au}(t)L(t,\co(t),\omega)\right\}
\end{eqnarray}
holds for almost everywhere $t$ in $[0,T]$, where $\underline\po$ is the weak solution of
\begin{equation}\label{bol4bis}
\begin{cases}
\dd \pp(t)= -\Di_{\underline\co(t)}^*\underline{A}[\pp(t)] & \text{ in } [0,T), \\
\pp(T)= - \Di_{\underline\co(T)}\varphi = \left(\begin{array}{c} \mathbf 0 \\ -1 \end{array}\right)\in E_C^*\times \R.
\end{cases}
\end{equation}
Then for $(v,0)\in E_C \times \R$ it holds
\begin{eqnarray}\label{bol6bis}
\langle \dd \po(t), v\rangle & = &\left\langle \dd \underline\po(t), \left(\begin{array}{c} v \\ 0 \end{array}\right) \right\rangle \stackrel{\eqref{bol4bis}}= \left\langle -\Di_{\underline\co(t)}^*\underline{A}[\underline\po(t)], \left(\begin{array}{c} v \\ 0 \end{array}\right) \right\rangle  \nonumber \\
&=& - \left\langle \underline\po(t), \Di_{\underline\co(t)}\underline{A}\left[\begin{array}{c} v \\ 0 \end{array}\right] \right\rangle \stackrel{\eqref{notbol},\eqref{DAau}} = -\left \langle  \left(\begin{array}{c} \po(t) \\ \po_{au}(t) \end{array}\right), \left(\begin{array}{c} \Di_{\co(t)}A[v] \\  \Di_{\co(t)}L[v] \end{array}\right) \right\rangle  \nonumber \\
& = & -\langle \po(t), \Di_{\co(t)}A[v] \rangle - \po_{au}(t)\Di_{\co(t)}L[v] \nonumber \\
& = & -\langle \Di_{\co(t)}^*A[\po(t)],v\rangle - \po_{au}(t)\Di_{\co(t)}L[v].
\end{eqnarray}
Therefore, recalling \eqref{defHam} and \eqref{DHdoppio}, in order to have the result it remains to prove that $\po_{au}(t)= -1$ almost everywhere $t$ in $[0,T]$ or, equivalently, since $\po_{au}(T)= -1$, to prove that $\dd \po_{au}(t)=0$ almost everywhere $t$ in $[0,T]$. Using \eqref{bol4bis} and the definition of adjoint operator we obtain that
\begin{eqnarray*}
\dd \po_{au}(t) & = &\left\langle \dd \underline\po(t), \left(\begin{array}{c} \mathbf{0} \\ 1 \end{array}\right) \right\rangle = \left\langle -\Di_{\underline\co(t)}^*\underline{A}[\underline\po(t)], \left(\begin{array}{c} \mathbf{0} \\ 1 \end{array}\right) \right\rangle \\
&=& - \left\langle \underline\po(t), \Di_{\underline\co(t)}\underline{A}\left[\begin{array}{c} \mathbf{0} \\ 1 \end{array}\right] \right\rangle \stackrel{\eqref{DAau}}\equiv 0.
\end{eqnarray*}
Hence, by substituting $\po_{au}(t)=-1$ in \eqref{bol6} and \eqref{bol6bis}, the result follows from \eqref{defHam} and \eqref{DHdoppio}.
\end{proof}

\subsection{A generalization to  finite particle control problems}
\label{s2}

In this subsection we write the Pontryagin maximum principle for the Bolza problem associated to multi-agent systems. This is the vector version of Theorem~\ref{PMP}, where we pass from $N=1$ agent represented by $c\in C$ to $N > 1$ agents represented by $\cv\in C^N$, $N\in \mathbb{N}$. We refer to Section~\ref{s:example} for some model examples in leader-follower dynamics and for replicator-type models with entropy regularisation. We further remark that existence for control problems of the form~\eqref{FinMin}--\eqref{FinEQ} below has been discussed in~\cite{AAMS}, while specific model assumptions for the velocity field~$\Aa$ may be found in \cite{MS2020, ADEMS}.

Before stating the assumptions on the velocity field and the cost functions, we introduce some notation. We denote by $\cv$ a generic element $(c_1,\dots,c_N)\in C^N$ and by $\cv(t)$ a generic element $(c_1(t),\dots,c_N(t))\in L^1([0,T];C^N)$, by $\pv$ an element $(p_1,\dots,p_N)\in (E_C^*)^N$ and by $\pv(t)$ an element $(p_1(t),\dots,p_N(t))\in L^1([0,T];(E_C^*)^N)$, by $\uv$ an element $(u_1,\dots,u_N)\in U^N$ and by $\uv(t)$ an element $(u_1
(t),\dots,u_N(t))\in \U^N$ respectively.

We make the following assumptions on the velocity field~$\Aa$ and on the cost functional~$\Ll$, which are the finite-particle counterparts of {\bf(HAode)} and of {\bf (HLode)}, respectively.

\begin{tcolorbox}
{\bf (HAsym)} Let $\Aa\colon  [0,T] \times C^N \times C \times U \to E$ be an operator satisfying the following properties:
\begin{itemize}
\item[$(i)$] for every $c\in C$, $\cv \in C^N$, $t\in[0,T]$ and $u\in U$ it holds
$$
\Aa(t,\cv,c,u) = \Aa(t,\sigma(\cv),c,u) \quad \text{for every permutation }\sigma\colon C^N \to C^N;
$$
\item[$(ii)$] for every $R>0$ there exists a constant $L_R > 0$ such that for every $c,\tilde c\in B_{R,C}$, $t\in [0,T]$, $\cv,\tilde\cv \in B_{R,C}^N$ and $u\in U$
\begin{equation*}\label{LIPA}
\|\Aa(t,\cv,c,u)-\Aa(t,\tilde \cv,\tilde c,u)\|_E\leq L_R\left(\|c-\tilde c\|_E + W_1\left(\scriptstyle{\frac{1}{N}\sum_{i=1}^N \delta_{c_i},\frac{1}{N}\sum_{i=1}^N \delta_{\tilde{c}_i}}\right)\right);
\end{equation*}
\item[$(iii)$] there exists $M>0$ such that for every $c\in C$, $\cv \in C^N$, $t\in[0,T]$ and $u\in U$ it holds
\begin{equation*}\label{FinBnd}
\|\Aa(t,\cv,c,u)\|_E \leq  M\left( 1+ \|c\|_E + m_1\left(\scriptstyle{\frac{1}{N}\sum_{i=1}^N \delta_{c_i}} \right)\right);
\end{equation*}
\item[$(iv)$] for every $c\in C$, $\cv\in C^N$ and $u\in U$ the map $t\mapsto \Aa(t,\cv,c,u)$ belongs to $L^1([0,T];E)$;
\item[$(v)$] for every $t\in [0,T]$, $c\in C$ and $\cv\in C^N$ the map $u\mapsto \Aa(t,\cv,c,u)$ is such that
$$
\|\Aa(t,\cv,c,u)-\Aa(t,\cv,c,\tilde{u})\|_E \leq \omega(d_U(u,\tilde{u})),
$$
where $\omega \colon [0,+\infty) \to [0,+\infty)$ is not dependent on $t$ and with $\lim_{s\to 0^+} \omega(s)=0$;
\item[$(vi)$] for every $R>0$ there exists $\theta>0$ such that for every $t\in [0,T]$, $\cv \in B_{R,C}^N$ and $u\in U$
\begin{equation*}\label{BrezisCon}
c\in C, \|c\|_E \leq R \Rightarrow c+\theta \Aa(t,\cv,c,u)\in C;
\end{equation*}
\item[$(vii)$] for every $c\in C$, $\cv=(c_1,\dots,c,\dots,c_N) \in C^N$, $t\in [0,T]$ and $u\in U$, $\Aa$ is $C$-differentiable at $c\in C$, i.e. there exists $\Di_c\Aa\in \li(E_C;E_C)$
\begin{equation*} \label{FinDiff}
\lim_{C\ni c'\to c} \dis\frac{\|\Aa(t,\cv',c',u)-\Aa(t,\cv,c,u)-\Di_c\Aa[c'-c]\|_E}{\|c'-c\|_E} = 0.
\end{equation*}
\end{itemize}
\end{tcolorbox}

\begin{tcolorbox}
{\bf (HLsym)} Let $\Ll\colon [0,T]\times C^N \times U^N \to \R$ be a continuous function such that
\begin{itemize}
\item[$(a)$] for every $\cv \in C^N$, $t\in[0,T]$ and $\uv\in U^N$ it holds
$$
\Ll(t,\cv,\uv) = \Ll(t,\sigma(\cv),\iota(\uv)) \quad \text{for every permutations }\sigma\colon C^N \to C^N, \iota\colon U^N\to U^N;
$$
\item[$(b)$] for every $R>0$ there exists a constant $L_R > 0$ such that for every $\cv, \tilde \cv\in (B_R^C)^N$, $t\in[0,T]$ and $\uv\in U^N$
\begin{equation*}
|\Ll(t,\cv,\uv)-\Ll(t,\tilde\cv,\uv)|\leq L_R W_1\left(\scriptstyle{\frac{1}{N}\sum_{i=1}^N \delta_{c_i},\frac{1}{N}\sum_{i=1}^N \delta_{\tilde{c}_i}}\right);
\end{equation*}
\item[$(c)$] for every $t\in [0,T]$ and $\cv \in C^N$ the map $u\mapsto \Ll(t,\cv,u)$ is such that
$$
\|\Ll(t,\cv,\uv)-\Ll(t,\cv,\tilde{\uv})\|_E \leq \omega(d_{U^N}(\uv,\tilde{\uv})),
$$
where $\omega \colon [0,+\infty) \to [0,+\infty)$ is not dependent on $t$ and with $\lim_{s\to 0^+} \omega(s)=0$;
\item[$(d)$] for every $c\in C$, $\cv=(c_1,\dots,c,\dots,c_N) \in C^N$, $t\in [0,T]$ and $u\in U$, $\Ll$ is $C$-differentiable at $c\in C$ with $C$-differential $\Di_c\Ll\in \li(E_C;\R)$.
\end{itemize}
\end{tcolorbox}

We consider the following Bolza problem
\begin{equation}\label{FinMin}
\min_{\uv(t)\in\U^N}\left\{\int_0^T \Ll(t,\cv(t),\uv(t))\di t \right\}
\end{equation}
subject to the system
\begin{equation}\label{FinEQ}
\begin{cases}
\dd c_i(t)=\Aa(t,\cv(t),c_i(t),u_i(t)) & \text{ in }(0,T], \\
c_i(0)=c_{0,i}\in C,
\end{cases}
\qquad\text{for } i=1,\dots,N.
\end{equation}

Thanks to the assumptions on $\Aa$ and by Theorem I.4 of \cite{Brezis} for every $\uv\in \U^N$ there exists a unique weak solution $\cv \in AC([0,T];C^N)$ of system \eqref{FinEQ}.

Assume that the minimum control problem \eqref{FinMin}-\eqref{FinEQ} admits a solution $(\co  ,\uo)  \in AC([0,T];C^N) \times \U^N$. Then, after an application of Theorem \ref{PMP}, the following result follows.

\begin{theorem}
\label{PMPsym}
Let $\Aa$ and $\Ll$ satisfy {\bf (HAsym)} and {\bf (HLsym)} respectively. Let $\uo \in \mathcal{U}^{N}$ be an admissible control whose corresponding trajectory $\co \in AC([0, T]; C^{N})$ is optimal for problem \eqref{FinMin}-\eqref{FinEQ}. Then there exists a family of co-state curves $\po \in AC([0,T];(E_C^*)^N)$ such that $(\co,\po,\uo)$ solve in distributional sense the following system
\begin{equation*}\label{FinSys}
\begin{cases}
\dis\dd \left(\begin{array}{c} \co_i(t) \\ \po_i(t) \end{array}\right) = J\Di_{(\co_i(t),\po_i(t))}\mathcal H(t,\co (t),\po (t),\uo (t)) & \text{ in } [0,T), \\
\co_i(0) = c_{0,i} \in C, \\
\po_i(T) = \mathbf{0} \in E_C^*, \\
\dis \uo (t)\in \argmax_{\uv\in U^N} \mathcal H(t,\co (t),\po (t),\uv),
\end{cases}
\quad \text{for every }i=1,\dots,N,
\end{equation*}
where the Hamiltonian $\mathcal H\colon  [0,T]\times C^N \times  (E_C^*)^N \times U^N \to \R$ is defined by
\begin{equation*}
\label{FinHam}
\mathcal H(t,\cv,\pv,\uv)= \sum_{i=1}^N \langle p_i,\Aa(t,\cv,c_i,u_i)\rangle - \Ll(t,\cv,\uv).
\end{equation*}
\end{theorem}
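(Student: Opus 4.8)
The plan is to recast the $N$-agent problem \eqref{FinMin}--\eqref{FinEQ} as a single-agent Bolza problem on a product state space and then invoke Theorem~\ref{PMP}. I would work in $\hat E:=E^N$ with norm $\|\mathbf e\|_{\hat E}:=\sum_{i=1}^N\|e_i\|_E$, take $\hat C:=C^N$ (closed and convex in $\hat E$) and the compact control space $\hat U:=U^N$ metrised by $d_{U^N}(\mathbf u,\tilde{\mathbf u}):=\sum_i d_U(u_i,\tilde u_i)$. A short preliminary computation identifies the induced structures, namely $E_{\hat C}=(E_C)^N$ and $E_{\hat C}^*=(E_C^*)^N$ with the split pairing $\langle\mathbf p,\mathbf e\rangle=\sum_i\langle p_i,e_i\rangle$; this rests on the fact that, by convexity of $C$, one has $\R(C^N-C^N)=(\R(C-C))^N$. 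I then define the lifted velocity $\hat A(t,\mathbf c,\mathbf u):=(\Aa(t,\mathbf c,c_1,u_1),\dots,\Aa(t,\mathbf c,c_N,u_N))$ and the lifted cost $\hat L(t,\mathbf c,\mathbf u):=\Ll(t,\mathbf c,\mathbf u)$, so that \eqref{FinEQ} reads exactly as $\dd\mathbf c=\hat A(t,\mathbf c,\mathbf u)$ on $\hat C$.

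Most of {\bf (HAode)} and {\bf (HLode)} transfers mechanically. The Lipschitz property {\bf (HAode)}$(i)$ follows from {\bf (HAsym)}$(ii)$ and the elementary bound $W_1\!\big(\tfrac1N\sum_i\delta_{c_i},\tfrac1N\sum_i\delta_{\tilde c_i}\big)\le\tfrac1N\sum_i\|c_i-\tilde c_i\|_E\le\tfrac1N\|\mathbf c-\tilde{\mathbf c}\|_{\hat E}$, obtained by using the diagonal coupling as an admissible plan; the growth bound $(ii)$ comes from {\bf (HAsym)}$(iii)$ after estimating $m_1$ by the product norm; measurability $(iii)$ is {\bf (HAsym)}$(iv)$; the control modulus $(iv)$ follows from {\bf (HAsym)}$(v)$ with an appropriate modulus; and the viability condition $(v)$, $\mathbf c+\theta\hat A\in C^N$, holds componentwise by {\bf (HAsym)}$(vi)$ with a common $\theta$ on bounded sets. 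The bounds {\bf (HLode)}$(a)$--$(c)$ follow likewise from {\bf (HLsym)}$(b)$,$(c)$ and the continuity of $\Ll$, while the symmetry hypotheses {\bf (HAsym)}$(i)$ and {\bf (HLsym)}$(a)$ ensure the lifted data are well defined.

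The crux is property {\bf (HAode)}$(vi)$: the $C$-differentiability of $\hat A$ on the \emph{product} $C^N$. Perturbing a single coordinate $c_i\rightsquigarrow c_i'$ changes the $i$-th output $\Aa(t,\cdot,c_i,u_i)$ through both the empirical interaction and the explicit slot — the diagonal derivative furnished by {\bf (HAsym)}$(vii)$ — but it also perturbs every other output $\Aa(t,\cdot,c_j,u_j)$, $j\ne i$, through the interaction argument alone. Consequently, I would assemble $\Di_{\mathbf c}\hat A\in\li(E_{C^N};E_{C^N})$ as a full $N\times N$ block operator, with diagonal blocks $\Di_{c_i}\Aa$ supplied by {\bf (HAsym)}$(vii)$ and off-diagonal blocks encoding the first-order response of the interaction term. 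Showing that these off-diagonal blocks exist, are linear and bounded, and lie in $L^\infty([0,T];\li(E_C;E_C))$ along admissible trajectories is the delicate step, as it cannot be read off from the diagonal derivative alone; here the symmetry {\bf (HAsym)}$(i)$ — which confines the $\mathbf c$-dependence to $\tfrac1N\sum_k\delta_{c_k}$ — together with the Lipschitz bound {\bf (HAsym)}$(ii)$ is essential, and the uniform-in-time estimate follows by repeating the argument of Lemma~\ref{ext-bnd} on the product space. The differentiability {\bf (HLode)}$(d)$ of $\hat L$ is obtained analogously from {\bf (HLsym)}$(d)$.

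Once {\bf (HAode)} and {\bf (HLode)} hold for $(\hat A,\hat L)$, Theorem~\ref{PMP} applies to the lifted problem $\min_{\mathbf u}\int_0^T\hat L(t,\mathbf c,\mathbf u)\,\dd t$ subject to $\dd\mathbf c=\hat A(t,\mathbf c,\mathbf u)$. It yields a costate $\po=(\po_1,\dots,\po_N)\in AC([0,T];(E_C^*)^N)$ with terminal value $\po(T)=\mathbf 0$, the Hamiltonian flow $\dd(\co,\po)=J\,\Di_{(\co,\po)}\hat{\mathcal H}$, and maximality of $\hat{\mathcal H}$ over $\mathbf u\in U^N$. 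Since the lifted Hamiltonian is, by construction, $\hat{\mathcal H}(t,\mathbf c,\mathbf p,\mathbf u)=\langle\mathbf p,\hat A\rangle-\hat L=\sum_{i=1}^N\langle p_i,\Aa(t,\mathbf c,c_i,u_i)\rangle-\Ll(t,\mathbf c,\mathbf u)$, it coincides verbatim with the $\mathcal H$ of the statement, and the proof closes by bookkeeping: the split pairing $\langle\mathbf p,\mathbf e\rangle=\sum_i\langle p_i,e_i\rangle$ turns the single product system into the $N$ coordinatewise equations for $(\co_i,\po_i)$ with $J$ acting blockwise, the terminal condition becomes $\po_i(T)=\mathbf 0$, and the product maximality becomes the joint $\argmax$ over $U^N$. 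The only identity to record is that the $i$-th state-block of $\Di_{(\co,\po)}\hat{\mathcal H}$ equals $\Di_{c_i}\mathcal H$, precisely the block delivered by the differential of the previous step.
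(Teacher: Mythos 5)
Your overall route --- lift the $N$-agent problem to a single Bolza problem on the product $C^N\subset E^N$ and apply Theorem~\ref{PMP} --- is exactly the paper's proof, which consists of precisely this one-line reduction (the paper states that the result ``follows after an application of Theorem~\ref{PMP}'' and gives no further detail). Your bookkeeping is correct where it is spelled out: $\R(C^N-C^N)=(\R(C-C))^N$ by convexity of $C$, hence $E_{C^N}=(E_C)^N$ and $E_{C^N}^*=(E_C^*)^N$ with the split pairing; the transfer of \textbf{(HAode)}$(i)$--$(v)$ and \textbf{(HLode)}$(a)$--$(c)$ via the diagonal-coupling estimate $W_1\big(\tfrac1N\sum_i\delta_{c_i},\tfrac1N\sum_i\delta_{\tilde c_i}\big)\le\tfrac1N\sum_i\|c_i-\tilde c_i\|_E$ works as you say; and the identification of the lifted Hamiltonian with $\mathcal H(t,\cv,\pv,\uv)=\sum_{i=1}^N\langle p_i,\Aa(t,\cv,c_i,u_i)\rangle-\Ll(t,\cv,\uv)$, with the $i$-th block of the full differential equal to $\Di_{(c_i,p_i)}\mathcal H$, is the intended reading of the coordinatewise system.

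The genuine gap is in your treatment of \textbf{(HAode)}$(vi)$ for the lifted field. You correctly observe that perturbing $c_i$ moves every component $\Aa(t,\cv,c_j,u_j)$, $j\ne i$, through the empirical measure, and that \textbf{(HAsym)}$(vii)$ only furnishes the diagonal derivative (simultaneous variation of the $i$-th slot and of the $i$-th atom of the measure). But you then claim the off-diagonal blocks ``exist, are linear and bounded'' on the strength of the symmetry \textbf{(HAsym)}$(i)$ and the Lipschitz bound \textbf{(HAsym)}$(ii)$. That implication is false: Lipschitz continuity of the measure dependence never yields differentiability --- in an infinite-dimensional $E_C$ a Lipschitz map need not be $C$-differentiable anywhere, and even in $\R^d$ the diagonal derivative plus $W_1$-Lipschitz dependence does not produce the cross-derivatives $\partial_{c_i}\Aa(t,\cv,c_j,u_j)$. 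Lemma~\ref{ext-bnd} gives the $L^\infty$ bound \emph{once} the differential exists; it cannot create it. The same defect affects \textbf{(HLode)}$(d)$: \textbf{(HLsym)}$(d)$ only gives the partial $C$-derivative in the distinguished slot, and partial differentiability in each coordinate does not imply joint $C$-differentiability on $C^N$, which is what the linearisation step in the proof of Theorem~\ref{teo1} actually uses (there $v_\eps=(\cv_\eps-\co)/\eps$ is a general, non-diagonal direction of $(E_C)^N$). The repair is not a cleverer argument but a reading (or strengthening) of the hypotheses: \textbf{(HAsym)}$(vii)$ and \textbf{(HLsym)}$(d)$ must be understood as joint $C$-differentiability with respect to the whole vector, i.e.\ a differential in $\li((E_C)^N;E_C)$ with the limit taken over $C^N\ni\cv'\to\cv$ --- the reading under which the paper's own invocation of Theorem~\ref{PMP} is legitimate. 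With that reading your block assembly and the remainder of your argument go through verbatim; without it, your ``delicate step'' is asserted rather than proven.
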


\section{Pontryagin maximum principle in the Wasserstein space of convex spaces}
\label{s3}

We will henceforth need stronger assumptions than those in Section \ref{s1}. Let the following assumptions hold:
\begin{tcolorbox}
\label{Hpspaces}
\begin{itemize}
\item $(E, \|\cdot\|_E)$ is a separable Banach space;
\item $C\subset E$ is a closed and convex subset of $E$;
\item $E_C$ is reflexive;
\item $(Z, \|\cdot\|_Z)$ is a separable Banach space;
\item $\U=L^1([0,T];U)$ where $U$ is a not empty compact subset of the normed space
$$
(C^1_b(C;Z),\|\cdot\|_{C^1_b}):=\left\{u\in C^1(C;Z) : \|u\|_{C^1_b}:=\scriptstyle {\sup_{c\in C}\|u(c)\|_{Z} + \sup_{c\in C}\|\Di_c u\|_{\li(E_C;Z)}}<+\infty\right\},
$$
where $C^1(C;Z)$ means continuous $C$-differentiability.
\end{itemize}
\end{tcolorbox}

From now on we will often use for any $u\in\U$ the identification $u(t)(c)=u(t,c)$. \\

\begin{remark}
\label{Radon}
Since $C$ is closed in $E$ (which is Banach and separable) we deduce that $C$ with the induced metric is a complete separable metric space. This implies that $C$ is a separable Radon space (it is actually a Polish space). Similarly, since $E_C$ is reflexive and separable, then $E_C^*$ is a separable Banach space, and so a separable Radon space. It follows that $C\times E_C^*$ is a separable Radon space and that $\mathcal{P}_p(C)$, $\mathcal{P}_p(E_C^*)$ and $\mathcal{P}_p(C\times E_C^*)$ are complete separable metric space for every $p\geq 1$.
\end{remark}

We define
$$
\mathcal{P}_c(C):=\left\{\mu \in \mathcal{P}(C) : \supp(\mu) \text{ is compact in }C \right\}
$$
and
$$
\mathcal{P}_b(C):=\left\{\mu \in \mathcal{P}(C) : \supp(\mu) \text{ is bounded in }C \right\}.
$$
From now on, unless otherwise specified, when we write $\mathcal{P}_c(C)$ and $\mathcal{P}_b(C)$ we mean the separable metric spaces $\left(\mathcal{P}_c(C), W_1\right)$ and $\left(\mathcal{P}_b(C), W_1\right)$ respectively, both seen as subset of the separable complete metric space $\left(\mathcal{P}_1(C), W_1\right)$. \\

Note that $E_{C\times E_C^*}=\overline{\R(C\times E_C^*-C\times E_C^*)}=E_C \times E_C^*$ and $(E_C \times E_C^*)^*=E_C^* \times E_C^{**}=E_C^*\times E_C$ (recall that $E_C$ is reflexive). We define the linear and continuous operator
$$
\begin{array}{lll}
J\colon E_C^*\times E_C & \to & E_C\times E_C^*  \\
\phantom{J:}(T,e) & \mapsto & (e,-T)
\end{array}.
$$

We define the \emph{R-fattening} of the support of a measure $\mu\in\mathcal{P}_c(C)$ as
$$
B_\mu(R):= \displaystyle\bigcup_{c\in\supp(\mu)} B_{R,C}(c),
$$
where $B_{R,C}(c)$ is the closed subset $\{e\in E : \|e-c\|_E\leq R\}\cap C$. Note that, since $\mu$ has compact support, if $\eta\in\mathcal{P}(B_\mu(R))$ then $\eta \in \PBC$. \\

From now on let $\phi\colon \mathcal{P}_1(C)\to Y$ be such that $\mathcal{P}_b(C) \subset D(\phi):=\{\mu\in\mathcal{P}_1(C) : \|\phi(\mu)\|_Y <+\infty\}$, with $(Y,\|\cdot\|_Y)$ a Banach space. We introduce a definition of local differentiability of $\phi$ at $\mu\in \mathcal{P}_c(C)$.

\begin{definition}
\label{Wmudiff}
A functional $\phi \colon \mathcal{P}_{1} (C) \to Y$ is locally differentiable at $\mu\in \mathcal{P}_c(C)$ if there exists a map $\nabla_\mu \phi(\mu)\in L^2_{\mu}(C; \li(E_C;Y))$ in Bochner sense such that for every $R>0$ and for every $\nu\in \mathcal{P}(B_\mu(R))$ it holds
$$
\phi(\nu)-\phi(\mu)=\int_{C\times C} \nabla_\mu\phi(\mu)(c_1)[c_2-c_1] \di{\bm\gamma}(c_1,c_2) + o_R(W_{2,{\bm\gamma}}(\mu,\nu))
$$
for any ${\bm\gamma}\in \Gamma(\mu,\nu)$, where
$$
W_{2,{\bm\gamma}}^2(\mu,\nu):=\int_{C\times C} \|c_1-c_2\|_E^2 \di{\bm\gamma}(c_1,c_2).
$$
\end{definition}

This new notion of local differentiability for functional with values in Banach spaces enjoys the following chain rule.
\begin{proposition}
\label{Wchainrule}
Let $\mu\in\mathcal{P}_c(C)$ and $V$ be a closed and bounded subset of $C$ such that $\supp(\mu)\subset V$. Suppose that $\phi\colon \mathcal{P}(V)\to Y$ is locally differentiable over $\mathcal{P}_c(V)$. Let $\psi\colon C\to C$ be a $C$-differentiable map with $C$-differential $D_c\psi$ such that
\begin{itemize}
\item[$(i)$] $D\psi \in  L^{\infty}_{\mu}(C;\li(E_C;E_C))$ where $D\psi(c):=D_c\psi$;
\item[$(ii)$] $\supp(\psi_{\#}\mu)\subset V$.
\end{itemize}
Let $\overline\varepsilon >0 $ and $\mathcal{G}\colon (-\overline\varepsilon,\overline\varepsilon)\times V \to C$ be a map such that:
\begin{itemize}
\item[$(iii)$] $\mathcal{G}(0,\cdot)=Id_V$ and the map $\eps \mapsto \mathcal{G}(\eps,c)\in C\subset E$ is Fr\'echet-differentiable at $\eps=0$ uniformly in $V$;
\item[$(iv)$] the map $\mathcal{F}\colon  V\to E_C$ which is defined by $c\mapsto \mathcal{F}(c):=\frac{\di}{\di\varepsilon} \mathcal{G}(\varepsilon,c)\big|_{\eps=0}$ belongs to $L^{\infty}_{\mu}(V;E_C)$;
\item[$(v)$] $\supp((\psi \circ\mathcal{G}(\eps,\cdot))_{\#}\mu)\subset V$ for every $\eps\in(-\overline\varepsilon,\overline\eps)$.
\end{itemize}
Then the map $\eps\mapsto \phi((\psi \circ\mathcal{G}(\eps,\cdot))_{\#}\mu)$ is Fr\'echet-differentiable at $\eps=0$ and
\begin{equation}
\label{Wchrule}
\frac{\di}{\di\eps}\phi((\psi \circ\mathcal{G}(\eps,\cdot))_{\#}\mu)\big|_{\eps=0}=\int_C \nabla_{\psi_{\#}\mu}\phi(\psi_{\#}\mu)(\psi(c))\left[D_c\psi[\mathcal{F}(c)]\right]\di\mu(c).
\end{equation}
\end{proposition}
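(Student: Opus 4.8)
The plan is to reduce the statement to the local differentiability of $\phi$ at the base point $\nu:=\psi_{\#}\mu$ and to read off the first-order expansion through a carefully chosen coupling. First I would check that $\nu\in\mathcal{P}_c(V)$: since $\supp(\mu)$ is compact and $\psi$ is continuous, $\supp(\nu)=\overline{\psi(\supp(\mu))}$ is compact, and it lies in $V$ by $(ii)$, so $\phi$ is locally differentiable at $\nu$. Setting $\mu_\eps:=(\psi\circ\mathcal{G}(\eps,\cdot))_{\#}\mu$, the natural transport plan between $\nu$ and $\mu_\eps$ is $\gamma_\eps:=(\psi,\psi\circ\mathcal{G}(\eps,\cdot))_{\#}\mu\in\Gamma(\nu,\mu_\eps)$. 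Because $\mathcal{G}(\eps,\cdot)\to Id_V$ by $(iii)$, for $|\eps|$ small the support of $\mu_\eps$ stays in a fixed fattening $B_\nu(R)$, so Definition~\ref{Wmudiff} applies with this $R$ and this plan and yields
\[
\phi(\mu_\eps)-\phi(\nu)=\int_C \nabla_\nu\phi(\nu)(\psi(c))\big[\psi(\mathcal{G}(\eps,c))-\psi(c)\big]\,\di\mu(c)+o_R\big(W_{2,\gamma_\eps}(\nu,\mu_\eps)\big).
\]

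Next I would expand the increment of $\psi$. By $(iii)$, $\mathcal{G}(\eps,c)-c=\eps\mathcal{F}(c)+o(\eps)$ uniformly on $V$, and the $C$-differentiability of $\psi$ at $c$ gives $\psi(\mathcal{G}(\eps,c))-\psi(c)=D_c\psi[\mathcal{G}(\eps,c)-c]+S_\eps(c)$ with $\|S_\eps(c)\|_E=o(\|\mathcal{G}(\eps,c)-c\|_E)$ pointwise. Dividing by $\eps$, the principal addendum is $D_c\psi\big[\tfrac{\mathcal{G}(\eps,c)-c}{\eps}\big]$, whose bracket converges to $\mathcal{F}(c)$ uniformly on $V$ and is bounded $\mu$-a.e. by $\|\mathcal{F}\|_{L^\infty_\mu}+1$; combined with $(i)$ this makes the integrand dominated by $\|\nabla_\nu\phi(\nu)(\psi(c))\|_{\li(E_C;Y)}\|D\psi\|_{L^\infty_\mu}(\|\mathcal{F}\|_{L^\infty_\mu}+1)$. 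The integrability I would exploit is that, by the change of variables $c'=\psi(c)$,
\[
\int_C\|\nabla_\nu\phi(\nu)(\psi(c))\|_{\li(E_C;Y)}^2\,\di\mu(c)=\int_C\|\nabla_\nu\phi(\nu)(c')\|_{\li(E_C;Y)}^2\,\di\nu(c')<+\infty,
\]
so $c\mapsto\nabla_\nu\phi(\nu)(\psi(c))$ lies in $L^2_\mu\subset L^1_\mu$. The uniform convergence from $(iii)$ then lets me pass to the limit in the principal addendum and recover exactly the right-hand side of \eqref{Wchrule}.

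It remains to show that the two error contributions are $o(\eps)$, and both reduce to a single estimate, namely a uniform-in-$\eps$, $\mu$-a.e. bound $\|\psi(\mathcal{G}(\eps,c))-\psi(c)\|_E\le K|\eps|$. Granted this bound, the Wasserstein remainder satisfies $W_{2,\gamma_\eps}^2(\nu,\mu_\eps)=\int_C\|\psi(\mathcal{G}(\eps,c))-\psi(c)\|_E^2\,\di\mu(c)\le K^2\eps^2$, so $W_{2,\gamma_\eps}(\nu,\mu_\eps)=O(\eps)$ and $o_R(W_{2,\gamma_\eps})=o(\eps)$; moreover the term carrying $S_\eps(c)/\eps$ is controlled by dominated convergence, since $\|S_\eps(c)\|_E/|\eps|\to0$ pointwise and is dominated, through the inequality $\|S_\eps(c)\|_E/|\eps|\le \|\psi(\mathcal{G}(\eps,c))-\psi(c)\|_E/|\eps|+\|D\psi\|_{L^\infty_\mu}\big\|\tfrac{\mathcal{G}(\eps,c)-c}{\eps}\big\|_E$, by a multiple of the integrable density $\|\nabla_\nu\phi(\nu)(\psi(c))\|_{\li(E_C;Y)}$.

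The hard part is precisely the uniform Lipschitz-type bound on $\psi$ along the perturbation, which is where the absence of a linear structure is felt: a classical mean value inequality is not directly at hand. I would obtain it from $(i)$ by a mean value inequality for $C$-differentiable maps along the segment joining $c$ to $\mathcal{G}(\eps,c)$, which remains in $C$ by convexity; on such a segment the directional derivative of $\psi$ equals $D_{\cdot}\psi[\mathcal{G}(\eps,c)-c]$ and is controlled by the bound on $D\psi$, so that $\|\psi(\mathcal{G}(\eps,c))-\psi(c)\|_E\le \|D\psi\|_{L^\infty_\mu}(\|\mathcal{F}\|_{L^\infty_\mu}+1)|\eps|$ once $\|\mathcal{G}(\eps,c)-c\|_E\le(\|\mathcal{F}\|_{L^\infty_\mu}+1)|\eps|$ is used from $(iii)$. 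The delicate point is that the derivative bound must be available along these segments and not merely $\mu$-almost everywhere, so it is here that the boundedness of $D\psi$ and the convexity of $C$ must be combined with care. Once this estimate is secured, collecting the principal term together with the two $o(\eps)$ remainders yields both the Fréchet-differentiability at $\eps=0$ and formula \eqref{Wchrule}.
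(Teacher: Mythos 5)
Your proposal follows the paper's architecture almost exactly: the same coupling $(\psi,\psi\circ\mathcal{G}(\varepsilon,\cdot))_{\#}\mu$, the same application of Definition~\ref{Wmudiff} at the base point $\psi_{\#}\mu$, the same first-order expansion of $\psi\circ\mathcal{G}(\varepsilon,\cdot)$, and the same passage to the limit by dominated convergence using the $L^2$-integrability of $c\mapsto \nabla_{\psi_{\#}\mu}\phi(\psi_{\#}\mu)(\psi(c))$ (your change-of-variables justification of that integrability is correct and is implicitly what the paper uses). However, the step you yourself single out as ``the hard part'' is a genuine gap. You propose to obtain the uniform bound $\|\psi(\mathcal{G}(\varepsilon,c))-\psi(c)\|_E\le K|\varepsilon|$ by a mean value inequality along the segment joining $c$ to $\mathcal{G}(\varepsilon,c)$, controlled by the bound on $D\psi$. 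But hypothesis $(i)$ only gives $D\psi\in L^{\infty}_{\mu}(C;\mathcal{L}(E_C;E_C))$, i.e.\ a bound $\mu$-almost everywhere; the points of the segment $[c,\mathcal{G}(\varepsilon,c)]$ lie in $C$ by convexity, but they need not carry any $\mu$-mass, and on a $\mu$-null set you have no quantitative control on $\|D_{c'}\psi\|$ whatsoever. So the mean value inequality is not licensed by the hypotheses, and since your domination of the remainder $S_\varepsilon(c)/\varepsilon$ is routed through precisely this Lipschitz bound, the gap propagates to both of your error estimates. You flag the problem (``the derivative bound must be available along these segments and not merely $\mu$-almost everywhere'') but do not resolve it.

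The resolution --- and what the paper actually does --- is to never leave the base point: using the uniform-in-$V$ Fr\'echet differentiability of $\mathcal{G}$ from $(iii)$ and the $C$-differentiability of $\psi$ at $c$ itself, write $\psi(\mathcal{G}(\varepsilon,c))=\psi\left(c+\varepsilon\mathcal{F}(c)+r_{\mathcal{F}}(\varepsilon)\right)=\psi(c)+\varepsilon D_c\psi[\mathcal{F}(c)]+r(\varepsilon,c)$, where $r(\varepsilon,c)=D_c\psi[r_{\mathcal{F}}(\varepsilon)]+o\left(\|\varepsilon\mathcal{F}(c)+r_{\mathcal{F}}(\varepsilon)\|_E\right)$ and $r_{\mathcal{F}}(\varepsilon)=o(\varepsilon)$ does not depend on $c$. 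Only $D_c\psi$ evaluated at the ($\mu$-a.e.) point $c$ enters, so $(i)$ suffices: one obtains $\left\|\tfrac{\psi(\mathcal{G}(\varepsilon,c))-\psi(c)}{\varepsilon}\right\|_E\le 2\|D\psi\|_{L^{\infty}_{\mu}}\|\mathcal{F}\|_{L^{\infty}_{\mu}}$ $\mu$-a.e.\ together with the pointwise limit $D_c\psi[\mathcal{F}(c)]$. This single expansion yields simultaneously $W_{2,\gamma_\varepsilon}(\psi_{\#}\mu,\mu_\varepsilon)=O(\varepsilon)$ --- hence $o_R(W_{2,\gamma_\varepsilon})=o(\varepsilon)$ --- and the $\varepsilon$-uniform domination needed for the Lebesgue theorem, without ever evaluating $D\psi$ off a $\mu$-full set. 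In short: delete the mean value step; your second paragraph already contains the expansion that, estimated as above, closes the argument and gives \eqref{Wchrule}.
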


\begin{proof}
For notational convenience set $\psi_\eps:=(\psi \circ\mathcal{G}(\eps,\cdot))\colon C\to C$. First we observe that, by $(ii)$, since $\psi\colon C\to C$ is continuous and $\mu\in\PC$, then $\supp(\psi_{\#}\mu)=\psi(\supp(\mu))$ is compact in $V$ (see formula (5.2.6) of \cite{AGS}). Hence $\psi_{\#}\mu\in\mathcal{P}_c(V)$ and, by ({\em v}), $(\psi_{\eps})_{\#}\mu \in \mathcal{P}(B_\mu(R))$ for $R>0$ large enough. Then, using that ${\bm \mu}_\eps:=\left(\psi, \psi_\eps\right)_{\#}\mu\in \Gamma\left(\psi_{\#}\mu, (\psi_\eps)_{\#}\mu\right)$ and that $\phi$ is locally differentiable over $\mathcal{P}_c(V)$, from Definition \ref{Wmudiff} we obtain
\begin{eqnarray}
\label{ch1}
&&\frac{\phi((\psi_\eps)_{\#}\mu)-\phi(\psi_{\#}\mu)}{\eps} \nonumber \\
&& = \int_{C\times C} \nabla_{\psi_{\#}\mu}\phi(\psi_{\#}\mu)(c)\left[\frac{\tilde c-c}{\eps}\right] \di{\bm\mu}_\eps(c,\tilde c) + \frac{1}{\eps}o_R\left(W_{2,{\bm\mu}_\eps}(\psi_{\#}\mu,(\psi_\eps)_{\#}\mu)\right) \nonumber \\
&& = \int_C \nabla_{\psi_{\#}\mu}\phi(\psi_{\#}\mu)(\psi(c))\left[\frac{\psi_\eps(c)-\psi(c)}{\eps}\right]\di\mu(c) \nonumber \\
&& \phantom{ = }+ \frac{1}{\eps} o_R\left(\left(\int_C \left\|\psi_\eps(c)-\psi(c)\right\|^2_E \di\mu(c) \right)^\frac{1}{2}\right).
\end{eqnarray}
Using that $\mathcal{G}$ is Frechet differentiable at $\eps=0$ uniformly in $V$  and since $\psi$ is $C$-differentiable, we have
\begin{equation}
\label{ch2}
\psi_\eps(c)=\psi(\mathcal{G}(\eps,c))=\psi\left(c+\eps\mathcal{F}(c)+r_{\mathcal{F}}(\eps)\right)=\psi(c)+\eps D_c\psi[\mathcal{F}(c)] + r(\eps,c),
\end{equation}
where
$$
r(\eps,c)= D_c\psi[r_{\mathcal{F}}(\eps)] + o(\|\eps\mathcal{F}(c)+r_{\mathcal{F}}(\eps)\|)
$$
with $r_{\mathcal{F}}(\eps)=o(\eps)$ not depending on $c$. It follows from $(i)$ and $(iv)$ that
\begin{equation}
\label{ch3}
\|r(\eps,c)\|_{L^{\infty}_{\mu}(C;E_C)}\leq 2\|D\psi\|_{L^{\infty}_{\mu}(C;\li(E_C;E_C))} \|r_{\mathcal{F}}(\eps)\|_E=o(\eps).
\end{equation}
Now we can pass to the limit in \eqref{ch1} as $\eps\to 0$. Indeed, by \eqref{ch2}-\eqref{ch3}, we have
$$
\lim_{\eps\to 0}\frac{\psi_\eps(c)-\psi(c)}{\eps} = D_c\psi[\mathcal{F}(c)]  \quad \text{a.e. }c\in C
$$
and
$$
\left\|\frac{\psi_\eps(c)-\psi(c)}{\eps}\right\|_{L^{\infty}_{\mu}(C;E_C)} \leq 2 \|D\psi\|_{L^{\infty}_{\mu}(C;\li(E_C;E_C))} \|\mathcal{F}\|_{L^{\infty}_{\mu}(C;E_C)}.
$$
Hence we deduce that
\begin{equation*}
o_R\left(\left(\int_C \left\|\psi_\eps(c)-\psi(c)\right\|^2_E \di\mu(c) \right)^\frac{1}{2}\right) = o_R(\eps) \qquad \text{as }\eps\to 0.
\end{equation*}
Therefore, recalling that $\nabla_{\psi_{\#}\mu}\phi(\psi_{\#}\mu) \in L^2_{\mu}(C; \li(E_C;Y))$, after an application of the Lebesgue theorem we obtain \eqref{Wchrule}.
\end{proof}

We start by considering the following Mayer optimal control problem:
\begin{equation}\label{Wmaymin}
\min_{u\in \U} \varphi(\mu(T))
\end{equation}
subject to
\begin{equation}\label{Wmayeq}
\begin{cases}
\dd \mu(t) +\mathrm{div}\left(A(t,\mu(t),\cdot,u(t,\cdot))\mu(t)\right)=0 & \text{ in }(0,T], \\
\mu(0)=\mu^0\in \mathcal{P}_c(C).
\end{cases}
\end{equation}

We assume that the non-local velocity field $A\colon [0,T]\times \mathcal{P}_b(C)\times C\times C^0_b(C;Z) \to E_C $ satisfies the following.

\begin{tcolorbox}
{\bf (HA1): Assumptions on $A$ for the well-posedness of \eqref{Wmayeq}}
\begin{itemize}
\item[$(i)$] there exists a constant $L> 0$ such that for every $t\in [0,T]$, $\mu,\tilde\mu \in \mathcal{P}_c(C)$, $c,\tilde c\in C$ and $u,\tilde u\in C^0_b(C;Z)$
\begin{equation*}
\|A(t,\mu,c,u)-A(t,\tilde\mu,\tilde{c},\tilde u)\|_E\leq L\left(\|c-\tilde c\|_E + W_1(\mu,\tilde\mu) + \|u-\tilde u\|_{C^0_b}\right);
\end{equation*}
\item[$(ii)$] there exists $M>0$ such that for every $t\in[0,T]$, $\mu \in \mathcal{P}_c(C)$, $c\in C$ and $u\in U$ there holds
\begin{equation*}\label{WFinBnd}
\|A(t,\mu,c,u)\|_E \leq  M\left( 1+ \|c\|_E + m_1(\mu)\right);
\end{equation*}
\item[$(iii)$] for every $\mu\in\mathcal{P}_c(C)$, $c\in C$ and $u\in U$ the map $t\mapsto A(t,\mu,c,u)$ belongs to $L^1([0,T];E)$;
\item[$(iv)$] for every $R>0$ there exists $\theta>0$ such that for every $t\in [0,T]$, $\mu \in \mathcal{P}_c(C)$ and $u\in U$
\begin{equation*}
c\in C, \|c\|_E \leq R \Rightarrow c+\theta A(t,\mu,c,u)\in C.
\end{equation*}
\end{itemize}
\end{tcolorbox}
Under {\bf (HA1)}, it follows from Theorem 3.3 of \cite{AFMS} that,  for every $u \in \U$, there exists a unique solution $\mu\in AC([0,T]; \PC)$ of \eqref{Wmayeq}. The curve $\mu(t)$ can be represented as
\begin{equation}
\label{solrep}
\mu(t)=\evt \qquad \text{for }t\in [0,T],
\end{equation}
where, for $0\leq s\leq t\leq T$, $\Phi_{(s,t)}^{\mu(s)}\colon C \to C$ denotes the family of non-local flows defined by
\begin{equation}
\label{flow}
\Phi_{(s,t)}^{\mu^s}(c):= c + \int_s^t A\left(t, \mu(\sigma), \Phi_{(s,\sigma)}^{\mu^s}(c), u\left(\sigma, \Phi_{(s,\sigma)}^{\mu^s}(c) \right) \right) \di\sigma, \qquad \text{with }\mu^s:=\mu(s).
\end{equation}
Moreover, by Theorem 1 of \cite{AAMS}, for every $0\leq s\leq t\leq T$ we have
\begin{equation}
\label{flow-cont}
\Phi_{(s,t)}^{\mu^s} \in C_b^0(C;C).
\end{equation}

In order to find necessary conditions for the optimal control problem \eqref{Wmaymin}-\eqref{Wmayeq} we need further assumptions.

\begin{tcolorbox}
{\bf (HA2): Assumptions on $A$ for the well-posedness of \eqref{Wsist}}
\begin{itemize}
\item[$(v)$] for every $t\in [0,T]$, $\mu\in\mathcal{P}_c(C)$ and $c\in C$ the map $u\mapsto A(t,\mu,c,u)$ belongs to $C^1(C^1_b(C;Z);E)$;
\item[$(vi)$] for every $t\in [0,T]$, $\mu\in\mathcal{P}_c(C)$ and $u\in U$ the map $c\mapsto A(t,\mu,c,u(t,c))$ is $C$-differentiable with $C$-differential $\Di_cA$ and the application $\Di A\colon \PC \times C \times U \to \li(E_C;E_C)$ defined by $(\mu,c,u)\mapsto \Di_cA:=\Di_cA(t,\mu,c,u(t,c))$ is continuous for every $t\in[0,T]$;
\item[$(vii)$] for every $t\in [0,T]$, $c\in C$ and $u\in U$ the map $\PC \ni \mu \mapsto A(t,\mu,c,u)\in E_C$ is locally differentiable at any $\mu$ in the sense of the Definition \ref{Wmudiff} with differential $\nabla_\mu A:=\nabla_\mu A(t,\mu,c,u)$ and the application $\PC\times C\times U \times C \ni (\mu,c,u,\tilde c) \mapsto \nabla_\mu A(\tilde c) \in \li(E_C;E_C)$ is continuous for every $t\in[0,T]$.
\end{itemize}
\end{tcolorbox}

With an extended but similar argument to the one used to prove Lemma \ref{ext-bnd}, we can get a boundedness result for $\Das$. We state such result in the following lemma which will be proved in Appendix A.

\begin{lemma}
\label{ext-bnd-inf}
Under the assumptions {\bf (HA1)}-$(i)$ and {\bf (HA2)}-$(vi)$, it holds that
\begin{equation}
\label{Das-bnd}
\left\|\Das\right\|_{\li(E_C;E_C)} \leq L \qquad \text{for every } 0\leq s\leq t\leq T \text{ and } c\in C,
\end{equation}
where $L$ is a positive constant which only depends on the Lipschitz constant of~$A$ and on~$U$. In particular,
$$
\text{the map}\quad (t,c)\mapsto \Das \quad \text{belongs to } L^{\infty}_{\li \times \mu^s}\left([s,T]\times C; \li(E_C;E_C)\right).
$$
\end{lemma}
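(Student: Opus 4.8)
The plan is to reproduce the two-step scheme used in the proof of Lemma~\ref{ext-bnd}, the only substantial difference being that the local constant $L_R$ there must be replaced by the \emph{global} Lipschitz constant $L$ of {\bf (HA1)}-$(i)$; this is precisely what makes the resulting bound uniform in the point $\hat c:=\Phi_{(s,t)}^{\mu^s}(c)$, which the flow may carry arbitrarily far from the origin. First I would fix $0\le s\le t\le T$ and $c\in C$ and estimate $\Di_{\hat c}A$ on the convex cone $E_{\hat c}=\R_+(C-\hat c)$. For $e\in E_{\hat c}$, convexity of $C$ gives $\hat c+he\in C$ for all small $h>0$, so the directional representation of the $C$-differential (the analogue of \eqref{Gat}) holds and
$$
\Di_{\hat c}A[e]=\lim_{h\to 0^+}\frac{A(t,\mu^s,\hat c+he,u(t,\hat c+he))-A(t,\mu^s,\hat c,u(t,\hat c))}{h}.
$$
Here the perturbation $\hat c\mapsto\hat c+he$ enters both through the state slot and, because the control is closed-loop, through the value of $u(t,\cdot)$; estimating the numerator by {\bf (HA1)}-$(i)$ and controlling the second contribution by the uniform $C^1_b$-bound available on the compact set $U$, I obtain $\|\Di_{\hat c}A[e]\|_E\le L\|e\|_E$ with $L$ depending only on the Lipschitz constant of $A$ and on $U$.

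Next I would pass from the cone $E_{\hat c}$ to the full vector space $\R(C-C)$ exactly as in Lemma~\ref{ext-bnd}: writing $v=\alpha(c_1-c_2)=\alpha(c_1-\hat c)-\alpha(c_2-\hat c)$ with $c_1-\hat c,\,c_2-\hat c\in E_{\hat c}$, using linearity of $\Di_{\hat c}A$ together with \eqref{Gat} and the identity $\hat c+h(c_i-\hat c)=(1-h)\hat c+hc_i\in C$, and invoking {\bf (HA1)}-$(i)$ once more, to get $\|\Di_{\hat c}A[v]\|_E\le L|\alpha|\,\|c_1-c_2\|_E=L\|v\|_E$. Since $E_C=\overline{\R(C-C)}$, a density argument then yields $\|\Di_{\hat c}A\|_{\li(E_C;E_C)}\le L$, and as $s,t,c$ were arbitrary this is exactly \eqref{Das-bnd}.

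It remains to deduce membership in $L^\infty_{\li\times\mu^s}([s,T]\times C;\li(E_C;E_C))$. The uniform bound just obtained supplies the essential boundedness, so only measurability of $(t,c)\mapsto\Das$ is left. This map is the composition of the flow $(t,c)\mapsto\Phi_{(s,t)}^{\mu^s}(c)$---measurable in $t$ through the integral representation \eqref{flow} and continuous in $c$ by \eqref{flow-cont}---with the map $\hat c\mapsto\Di_{\hat c}A$, which is continuous by {\bf (HA2)}-$(vi)$, so a Carath\'eodory-type argument gives joint $\li\times\mu^s$-measurability.

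I expect the genuine difficulty to lie not in any single estimate but in the uniformity bookkeeping: one must verify that the constant stays independent of $\hat c$ even though the flow may move $\supp(\mu^s)$ far from the origin---this is exactly where the global, rather than $R$-dependent, Lipschitz bound of {\bf (HA1)}-$(i)$ is indispensable---and one must handle the closed-loop control term, whose point-values vary with $\hat c$, uniformly through the $C^1_b$-compactness of $U$.
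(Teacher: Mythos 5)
Your proof is correct and follows essentially the same route as the paper's: the same two-step decomposition $v=\alpha(c_1-\tilde c)-\alpha(c_2-\tilde c)$ borrowed from Lemma~\ref{ext-bnd}, the global (rather than $R$-local) Lipschitz constant of {\bf (HA1)}-$(i)$, and the uniform $C^1_b$-bound over the compact control set $U$ to absorb the closed-loop contribution (the paper invokes Remark~A.5 of \cite{AFMS} for this constant $L_U$, arriving at $L(1+L_U)$, then renames it $L$). The only immaterial slip is writing $\mu^s$ instead of $\mu(t)$ in the measure slot of the directional-derivative representation (it cancels anyway in the Lipschitz estimate), and your explicit measurability argument is sound, merely spelling out what the paper leaves implicit.
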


Thanks to Lemma \ref{ext-bnd-inf} we can deduce a regularity result for the family of non-local flows defined in \eqref{flow}. In the next lemma we show that this family is $C$-differentiable and characterize its $C$-differential. Since the techniques used in the proof are similar to those seen in Section \ref{s1}, we postpone the proof in Appendix A.

\begin{lemma}
\label{Dphi-bnd}
Under the assumptions {\bf (HA1)} and {\bf (HA2)}-$(vi)$, it holds that for every $0 \leq s \leq t \leq T$ the map $\Phi_{(s,t)}^{\mu^s}\colon C \to C$ is $C$-differentiable with $C$-differential $\Di_c \Phi_{(s,t)}^{\mu^s}$ at $c\in C$ and
$$
\text{the map}\quad (t,c) \mapsto\Di_{c}\Phi_{(s,t)}^{\mu^s} \quad \text{belongs to } L^\infty_{\li\times \mu^s}\left([s,T] \times C;\li(E_C;E_C)\right).
$$
Moreover, for every $c\in C$ and $f\in E_C$, $\Di_{c}\Phi_{(s,t)}^{\mu^s}[f]$ is the unique solution to the linear differential equation
\begin{equation}\label{Dphi-edo}
\begin{cases}
\dd z(t,c)=\Di_{\Phi_{(s,t)}^{\mu^s}(c)}A[z(t,c)]   & \text{ in }(s,T], \\
z(s,c)=f.
\end{cases}
\end{equation}
\end{lemma}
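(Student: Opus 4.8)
The plan is to reproduce the blow-up scheme of Theorem~\ref{teo1}, but replacing the needle variation of the control by a perturbation $c'\to c$ of the starting point of the characteristic, and to identify the limit of the resulting difference quotients with the solution of the linear problem~\eqref{Dphi-edo}. The decisive simplification is that in~\eqref{flow} the curve $\mu(\sigma)$ is kept \emph{fixed} while only the initial point is perturbed, so that the sole $c$-dependence of the integrand sits in $\Phi_{(s,\sigma)}^{\mu^s}(c)$ (both as the state slot of $A$ and through the closed-loop control). Accordingly only the $C$-differentiability of the effective state-velocity $c\mapsto A(t,\mu,c,u(t,c))$, that is {\bf (HA2)}-$(vi)$, will be used, and not the Wasserstein differentiability {\bf (HA2)}-$(vii)$.

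First I would settle the well-posedness of~\eqref{Dphi-edo} and define the candidate differential. For fixed $c\in C$, Lemma~\ref{ext-bnd-inf} ensures that $t\mapsto \Di_{\Phi_{(s,t)}^{\mu^s}(c)}A$ lies in $L^\infty([s,T];\li(E_C;E_C))$ with norm at most $L$; since $E_C$ is a linear space, \eqref{Dphi-edo} is a genuinely linear Cauchy problem and Theorem~I.4 of~\cite{Brezis} yields a unique solution $z(\cdot,c)\in AC([s,T];E_C)$ for every $f\in E_C$, which by Gr\"onwall satisfies $\|z(t,c)\|_E\leq\|f\|_E\,\mathrm{e}^{L(t-s)}$. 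I then set $\Di_c\Phi_{(s,t)}^{\mu^s}[f]:=z(t,c)$: linearity of~\eqref{Dphi-edo} makes this a linear operator in $f$, and the Gr\"onwall bound gives $\|\Di_c\Phi_{(s,t)}^{\mu^s}\|_{\li(E_C;E_C)}\leq\mathrm{e}^{LT}$, which is the asserted essential bound; joint measurability in $(t,c)$, needed for the $L^\infty_{\li\times\mu^s}$ membership, follows from the continuous dependence of $z$ on its coefficient together with the continuity assumption {\bf (HA2)}-$(vi)$.

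The core step is to verify that $z(t,c)$ is indeed the $C$-differential in the sense of Definition~\ref{d:cdiff}. For $c'\in C$ close to $c$ put $e:=(c'-c)/\|c'-c\|_E\in E_c$ and $w(t):=\bigl(\Phi_{(s,t)}^{\mu^s}(c')-\Phi_{(s,t)}^{\mu^s}(c)\bigr)/\|c'-c\|_E$; reading off~\eqref{flow} gives
\begin{equation*}
\dd w(t)=\Di_{\Phi_{(s,t)}^{\mu^s}(c)}A[w(t)]+R(t),\qquad w(s)=e,
\end{equation*}
where $\|c'-c\|_E\,R(t)$ is the first-order remainder of $A$ at $\Phi_{(s,t)}^{\mu^s}(c)$ along $\Phi_{(s,t)}^{\mu^s}(c')-\Phi_{(s,t)}^{\mu^s}(c)$. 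A Gr\"onwall estimate based on {\bf (HA1)}-$(i)$ and the $C^1_b$-regularity of the closed-loop control (cf. Theorem~1 of~\cite{AAMS}) first shows that the flow is Lipschitz in its initial datum, $\|\Phi_{(s,t)}^{\mu^s}(c')-\Phi_{(s,t)}^{\mu^s}(c)\|_E\leq\tilde L\|c'-c\|_E$ uniformly in $t\in[s,T]$, whence $\|w(t)\|_E\leq\mathrm{e}^{\tilde LT}$ and $\Phi_{(s,t)}^{\mu^s}(c')\to\Phi_{(s,t)}^{\mu^s}(c)$ as $c'\to c$. Writing $\|R(t)\|_E$ as the $C$-differentiability quotient of $A$ at $\Phi_{(s,t)}^{\mu^s}(c)$ times $\|w(t)\|_E$, assumption {\bf (HA2)}-$(vi)$ forces $R(t)\to0$ pointwise in $t$, while $\|R(t)\|_E$ stays dominated by a constant independent of $c'$ (by {\bf (HA1)}-$(i)$ and Lemma~\ref{ext-bnd-inf}); dominated convergence then gives $\|R\|_{L^1([s,T];E_C)}\to0$ along any sequence $c'\to c$.

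Finally, $w$ and $z_e:=\Di_c\Phi_{(s,\cdot)}^{\mu^s}[e]$ solve the same linear equation up to the source $R$ and share the datum $e$, so subtracting and invoking Gr\"onwall yields $\|w(t)-z_e(t)\|_E\leq\|R\|_{L^1([s,T];E_C)}\,\mathrm{e}^{L(t-s)}\to0$. Since $\Di_c\Phi_{(s,t)}^{\mu^s}[c'-c]=\|c'-c\|_E\,z_e(t)$, this reads
\begin{equation*}
\lim_{C\ni c'\to c}\frac{\bigl\|\Phi_{(s,t)}^{\mu^s}(c')-\Phi_{(s,t)}^{\mu^s}(c)-\Di_c\Phi_{(s,t)}^{\mu^s}[c'-c]\bigr\|_E}{\|c'-c\|_E}=0,
\end{equation*}
which is the claimed $C$-differentiability. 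The main obstacle is precisely the passage from these directional limits to a genuine $C$-differential uniform over directions $e$: the modulus of $C$-differentiability of $A$ at the moving point $\Phi_{(s,\sigma)}^{\mu^s}(c)$ may depend on $\sigma$, so no modulus uniform in $\sigma$ is available. The remedy is to control the remainder in the $L^1$-in-time norm and to use dominated convergence along sequences $c'\to c$ rather than attempting a uniform-in-$\sigma$ estimate.
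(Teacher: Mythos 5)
Your proposal is correct and follows essentially the same route as the paper's proof: you define the candidate differential as the solution of \eqref{Dphi-edo} (well-posed, linear in $f$, and bounded in $\li(E_C;E_C)$ by $\mathrm{e}^{LT}$ via Lemma \ref{ext-bnd-inf} and Gr\"onwall), then show the blow-up quotient solves the same linear equation up to a remainder which vanishes in $L^1([s,T];E_C)$ by combining the Lipschitz dependence of the flow on its initial datum, pointwise convergence from {\bf (HA2)}-$(vi)$, the uniform bound from {\bf (HA1)}-$(i)$, and dominated convergence, before concluding with a final Gr\"onwall estimate — precisely the paper's decomposition into $z_h$, $r_h$, and $\delta_h$. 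The only differences are notational (your normalized $w$, $e$, $R$ versus the paper's $z_h$, $f$, $r_h$).
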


To adapt the needle variation technique to the infinite-dimensional case we need another lemma describing how the flow \eqref{flow} and the solution of \eqref{Wmayeq} behave when we do an infinitesimal variation of the control. Also in this case we postpone the proof to Appendix A. For simplicity of notation we give the result for initial time $s=0$.

\begin{lemma}
\label{conv-flussi-mis}
Assume {\bf (HA1)} and {\bf (HA2)}. Let $\eps>0$. Let $u,u_\eps \in \U$ be such that  $u_\eps \to u$ in $L^1([0,T];(U,\|\cdot\|_{C_b^1}))$ as $\eps \to 0$. Let $\mu^0,\mu^0_\eps \in \mathcal{P}_c(C)$ be such that $W_1(\mu^0_\eps,\mu^0) \to 0$ as $\eps \to 0$. Let $\mu$ be the solution of \eqref{Wmayeq} with initial datum $\mu^0$ and corresponding family of flows $\Phi_{(0,t)}^{\mu^0}$. Let $\mu_\eps$ be the solution of \eqref{Wmayeq} in which $u$ is replaced by~$u_\eps$ with initial datum~$\mu^0_\eps$ and corresponding family of flows $\Phi^{\eps,\mu^0_\eps}_{(0,t)}$. Then the following hold as $\eps \to 0$:
\begin{itemize}
\item[$(a)$] $\Phi^{\eps,\mu^0_\eps}_{(0,t)}\to \Phi^{\mu^0}_{(0,t)}$ in $C_b^0([0,T]\times C;C)$;
\item[$(b)$] $W_1(\mu_\eps(t),\mu(t))\to 0$ uniformly in $[0,T]$;
\item[$(c)$] $\Di_c\Phi^{\eps,\mu^0_\eps}_{(0,t)} \to \Di_c \Phi^{\mu^0}_{(0,t)}$ in $\li(E_C;E_C)$ for every $c\in C$ and $t\in [0,T]$.
\end{itemize}
\end{lemma}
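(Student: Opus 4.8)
The plan is to prove all three claims by comparing the integral (fixed-point) formulations \eqref{flow} of the two families of flows, together with the representation \eqref{solrep} of the measures, and in each case to close a suitable Gr\"onwall loop. Throughout I use that, by {\bf (HA1)} and Theorem~3.3 of \cite{AFMS}, the supports of $\mu_\eps(t)$ and $\mu(t)$ remain in a fixed bounded subset of $C$ uniformly in $t\in[0,T]$ and in $\eps$ small, and that each $\Phi^{\eps,\mu^0_\eps}_{(0,t)}$ is Lipschitz in $c$ with a constant bounded uniformly in $(\eps,t)$ (apply Gr\"onwall to \eqref{flow} using the uniform constant $L$ of {\bf (HA1)}-$(i)$ and the bound $\sup_{w\in U}\|w\|_{C^1_b}<+\infty$ coming from compactness of $U$). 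For $(a)$, fix $c$ and set $\delta_\eps(t,c):=\|\Phi^{\eps,\mu^0_\eps}_{(0,t)}(c)-\Phi^{\mu^0}_{(0,t)}(c)\|_E$. Subtracting the two instances of \eqref{flow} and using {\bf (HA1)}-$(i)$ (the uniform $C^1_b$-bound on $U$ absorbs the closed-loop evaluation of the control) gives
\begin{equation*}
\delta_\eps(t,c)\le L\!\int_0^t\!\! \delta_\eps(\sigma,c)\,\di\sigma + L\!\int_0^t\!\! W_1(\mu_\eps(\sigma),\mu(\sigma))\,\di\sigma + L\!\int_0^T\!\! \|u_\eps-u\|_{C^1_b}\,\di\sigma.
\end{equation*}
The delicate term is the middle one, where the flow and measure convergences are coupled nonlocally. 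Writing $\mu_\eps(\sigma),\mu(\sigma)$ via \eqref{solrep} and using $W_1(T_\#\alpha,T_\#\beta)\le \mathrm{Lip}(T)W_1(\alpha,\beta)$ and $W_1(S_\#\alpha,T_\#\alpha)\le\int\|S-T\|_E\,\di\alpha$, I bound $W_1(\mu_\eps(\sigma),\mu(\sigma))$ by $\mathrm{Lip}(\Phi^{\eps,\mu^0_\eps}_{(0,\sigma)})W_1(\mu^0_\eps,\mu^0)+\sup_{c'\in\supp(\mu^0)}\delta_\eps(\sigma,c')$. Restricting first $c$ to $\supp(\mu^0)$ and taking the supremum closes a single Gr\"onwall inequality for $S_\eps(t):=\sup_{c'\in\supp(\mu^0)}\delta_\eps(t,c')$, whose forcing tends to $0$ (as $u_\eps\to u$ in $L^1$ and $W_1(\mu^0_\eps,\mu^0)\to0$); hence $S_\eps(t)\to0$ uniformly. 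Plugging this back, the forcing in the inequality for $\delta_\eps(t,c)$ becomes a single sequence independent of $c$ tending to $0$, and a second Gr\"onwall yields $\delta_\eps(t,c)\to0$ uniformly in $(t,c)\in[0,T]\times C$, i.e.\ $(a)$.

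Claim $(b)$ is then a direct consequence: by \eqref{solrep} and the same two transport estimates,
\begin{equation*}
W_1(\mu_\eps(t),\mu(t))\le \mathrm{Lip}\big(\Phi^{\eps,\mu^0_\eps}_{(0,t)}\big)\,W_1(\mu^0_\eps,\mu^0)+\sup_{c\in\supp(\mu^0)}\big\|\Phi^{\eps,\mu^0_\eps}_{(0,t)}(c)-\Phi^{\mu^0}_{(0,t)}(c)\big\|_E,
\end{equation*}
and the right-hand side tends to $0$ uniformly in $t$ by the uniform Lipschitz bound, $W_1(\mu^0_\eps,\mu^0)\to0$ and part $(a)$.

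For $(c)$, fix $c$, $t$ and $f\in E_C$, and let $z_\eps(\cdot):=\Di_c\Phi^{\eps,\mu^0_\eps}_{(0,\cdot)}[f]$, $z(\cdot):=\Di_c\Phi^{\mu^0}_{(0,\cdot)}[f]$ be the solutions of \eqref{Dphi-edo} provided by Lemma~\ref{Dphi-bnd}. Subtracting their integral forms and inserting $\pm\,\Di_{\Phi^{\eps,\mu^0_\eps}_{(0,\sigma)}(c)}A[z(\sigma)]$, I split the difference into a part bounded by $L\int_0^t\|z_\eps-z\|_E\,\di\sigma$ (via $\|\Di_{\Phi^{\eps,\mu^0_\eps}_{(0,\sigma)}(c)}A\|_{\mathcal L}\le L$ from Lemma~\ref{ext-bnd-inf}) and a frozen part $\int_0^t\|(\Di_{\Phi^{\eps,\mu^0_\eps}_{(0,\sigma)}(c)}A-\Di_{\Phi^{\mu^0}_{(0,\sigma)}(c)}A)[z(\sigma)]\|_E\,\di\sigma$. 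Since $\|z(\sigma)\|_E\le \mathrm{e}^{L\sigma}\|f\|$ (Gr\"onwall in \eqref{Dphi-edo} with the same bound), the frozen part is at most $\beta_\eps(t)\|f\|$ with $\beta_\eps(t):=\mathrm{e}^{LT}\int_0^t\|\Di_{\Phi^{\eps,\mu^0_\eps}_{(0,\sigma)}(c)}A-\Di_{\Phi^{\mu^0}_{(0,\sigma)}(c)}A\|_{\mathcal L}\,\di\sigma$. By the continuity of $\Di A$ postulated in {\bf (HA2)}-$(vi)$, evaluated along $\mu_\eps(\sigma)\to\mu(\sigma)$ (part $(b)$), $\Phi^{\eps,\mu^0_\eps}_{(0,\sigma)}(c)\to\Phi^{\mu^0}_{(0,\sigma)}(c)$ (part $(a)$) and $u_\eps(\sigma)\to u(\sigma)$ in $C^1_b$ (valid for a.e.\ $\sigma$ along a subsequence, as $u_\eps\to u$ in $L^1$), the integrand $\to0$ for a.e.\ $\sigma$ and is dominated by $2L$; dominated convergence gives $\beta_\eps(t)\to0$, and the standard ``every subsequence has a further subsequence'' argument promotes this to the full sequence. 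Gr\"onwall (with $\beta_\eps$ nondecreasing) then yields $\|z_\eps(t)-z(t)\|_E\le\beta_\eps(t)\,\mathrm{e}^{Lt}\|f\|$, and taking the supremum over $\|f\|_E\le1$ gives $\|\Di_c\Phi^{\eps,\mu^0_\eps}_{(0,t)}-\Di_c\Phi^{\mu^0}_{(0,t)}\|_{\mathcal L}\le\beta_\eps(t)\,\mathrm{e}^{Lt}\to0$, which is $(c)$.

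The main obstacle lies in $(a)$ and $(c)$. In $(a)$ the nonlocality forbids decoupling the flow and measure convergences, which forces the two-stage Gr\"onwall argument (first on $\supp(\mu^0)$, then on all of $C$). In $(c)$ the subtlety is that the statement demands \emph{operator-norm} convergence of the differentials rather than mere pointwise (strong) convergence; this is precisely why one needs the $\li(E_C;E_C)$-valued continuity of $\Di A$ furnished by {\bf (HA2)}-$(vi)$ together with a Gr\"onwall estimate whose constant is uniform in $f$.
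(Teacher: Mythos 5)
Your proof is correct; parts $(b)$ and $(c)$ essentially coincide with the paper's argument, while your treatment of $(a)$ takes a genuinely different route. The paper decouples the two perturbations by introducing the intermediate solution $\hat\mu_\eps$ (control $u_\eps$, unperturbed datum $\mu^0$) with flows $\Phi^{\eps,\mu^0}_{(0,t)}$: the initial-datum part is controlled by the stability estimate of Theorem~3.3 of \cite{AFMS}, $W_1(\mu_\eps(t),\hat\mu_\eps(t))\le \ex^{Mt}W_1(\mu^0_\eps,\mu^0)$, while the control perturbation is handled by a Gr\"onwall inequality for $F_\eps(t)=\sup_C\|\Phi^{\eps,\mu^0}_{(0,t)}(c)-\Phi^{\mu^0}_{(0,t)}(c)\|_E$, in which the nonlocal term is absorbed via $W_1(\hat\mu_\eps(s),\mu(s))\le F_\eps(s)$ --- legitimate precisely because both measures are pushforwards of the \emph{same} $\mu^0$. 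You avoid the intermediate solution altogether and treat both perturbations in one coupled loop, re-expressing $W_1(\mu_\eps(\sigma),\mu(\sigma))$ through the pushforward representation as $\mathrm{Lip}\big(\Phi^{\eps,\mu^0_\eps}_{(0,\sigma)}\big)W_1(\mu^0_\eps,\mu^0)+\sup_{c'\in\supp(\mu^0)}\delta_\eps(\sigma,c')$ and running a two-stage Gr\"onwall, first on $\supp(\mu^0)$ to close the loop for $S_\eps$, then on all of $C$ with a $c$-independent forcing; both routes hinge on the same uniform Lipschitz constant $L_U$ for controls in the compact set $U$ (Remark~A.5 of \cite{AFMS}), which you correctly invoke to absorb the closed-loop evaluation of the control. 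What the paper's splitting buys is modularity --- the initial-datum perturbation is delegated wholesale to an available stability theorem --- whereas your argument is self-contained, needing only the uniform Lipschitz bound on the flows, at the price of the slightly more delicate restrict-to-support-then-extend step. For $(c)$ your proof is the paper's essentially verbatim: same cross-term insertion, same bound $\le L$ from Lemma~\ref{ext-bnd-inf}, same use of the continuity in {\bf (HA2)}-$(vi)$ along the convergences of $(a)$--$(b)$, dominated convergence, Gr\"onwall, and supremum over $\|f\|_E\le 1$ to upgrade to operator-norm convergence; you are in fact a bit more careful than the paper in observing that $L^1$ convergence of $u_\eps$ gives a.e.\ convergence only along subsequences, which you repair by the standard subsequence-of-subsequences argument.
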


As for the final cost $\varphi\colon \PC \to \R $ we assume:
\begin{tcolorbox}
\begin{itemize}
\item[{\bf (H$\varphi$):}]
$\varphi$ is Lipschitz continuous w.r.t. the $W_1$-metric and locally differentiable over $\PC$ with differential $\nabla_\mu\varphi$. Moreover, the map $C\ni c \mapsto \nabla_\mu \varphi(c)\in E_C^*$ is continuous.
\end{itemize}
\end{tcolorbox}

We define the Hamiltonian $\mathcal{H}\colon [0,T]\times \mathcal{P}_b(C\times E_C^*) \times U \to \R$ of the Mayer problem \eqref{Wmaymin}-\eqref{Wmayeq} as
\begin{equation}
\label{Wham}
\mathcal{H}(t,\nu,\omega)=\int_{C\times E_C^*} \langle p, A(t,\pi_{\#}^1\nu,c,\omega(c))\rangle \di \nu(c,p).
\end{equation}
Note that, by {\bf (HA1)}-$(ii)$, $\mathcal{H}(t,\nu,\omega)$ is finite for every $\nu \in \mathcal{P}_b(C\times E_C^*)$. Moreover, thanks to the assumptions {\bf (HA1)}-{\bf (HA2)} we can apply Lemma \ref{calc-diff-Ham} to obtain that $\mathcal H$ is locally differentiable at any $\nu\in \mathcal{P}_c(C\times E_C^*)$ in the sense of Definition \ref{Wmudiff}. The following explicit formula for its differential $\nabla_\nu \mathcal H(t,\nu,u)\colon  C \times E_C^* \to E_C^*\times E_C$ holds:
\begin{equation}
\label{form-diff-H}
\nabla_\nu \mathcal H(t,\nu,u)(c,p) = \left(\begin{array}{l} D_c^{*}A(t,\pi_{\#}^1\nu,c,u)[p] + \displaystyle\int_{C\times E_C^*}\nabla_{\pi_{\#}^1\nu}^*A(t,\pi_{\#}^1\nu,\tilde c,u)(c)[\tilde p] \di\nu(\tilde c,\tilde p) \\
A(t,\pi_{\#}^1\nu,c,u)
\end{array}\right),
\end{equation}
where $D_c^{*}A(t,\pi_{\#}^1\nu,c,u)$ is the adjoint operator of $D_cA(t,\pi_{\#}^1\nu,c,u)$ and $\nabla_{\pi_{\#}^1\nu}^*A(t,\pi_{\#}^1\nu,\tilde c,u)(c)$ is the adjoint operator of $\nabla_{\pi_{\#}^1\nu}A(t,\pi_{\#}^1\nu,\tilde c,u)(c)$. \\

We state the Pontryagin maximum principle for the infinite-dimensional Mayer problem \eqref{Wmaymin}--\eqref{Wmayeq}.
\begin{theorem}
\label{mainres}
Let $(\muo,\uo)\in AC([0,T];\mathcal{P}_c(C))\times\U$ be an optimal pair for \eqref{Wmaymin}-\eqref{Wmayeq}. Then there exists $\nuo\in AC([0,T]; \mathcal{P}_c(C\times E_C^*))$ which solves in distributional sense
\begin{equation}\label{Wsist}
\begin{cases}
\dis\dd \nuo(t) = - \mathrm{div}_{(c,p)}\left(\left(J\nabla_{\nuo(t)}\mathcal{H}(t,\nuo(t),\uo(t))(\cdot,\cdot)\right)\nuo(t)\right)   & \text{ in } [0,T), \\
\pi^1_{\#}\nuo(t)=\muo(t), \\
\nuo(T)= \left(Id , -\nabla_{\muo(T)}\varphi(\muo(T))\right)_{\#}\muo(T),
\end{cases}
\end{equation}
where $\mathcal H$ and $\nabla_\nu \mathcal H$ are defined by \eqref{Wham} and \eqref{form-diff-H} respectively. Moreover the following maximality condition holds
\begin{equation}
\label{Wmaxcond}
\mathcal{H}(t,\nuo(t),\uo(t))=\max_{\omega\in U}\{\mathcal{H}(t,\nuo(t),\omega)\} \qquad \text{for a.e. }t\in [0,T].
\end{equation}
\end{theorem}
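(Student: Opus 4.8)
The plan is to transpose the finite-dimensional argument of Theorem~\ref{teo1} to the Wasserstein setting, replacing the tangent vector $v(t)$ by a linearised field of displacements $z(t,\cdot)$ propagated along the optimal flow, and the scalar duality $\langle p(t),v(t)\rangle$ by an integral pairing against $\mu^0$. First I would fix a control value $\omega\in U$ and a Lebesgue point $\tau\in(0,T]$ of the relevant integrands, made independent of $\omega$ through separability of $U$ exactly as in Theorem~\ref{teo1}, and introduce the needle variation $u_\eps$ equal to $\omega$ on $[\tau-\eps,\tau]$ and to $\uo$ elsewhere. Writing $\mu_\eps$ for the corresponding solution of \eqref{Wmayeq} and $\Phi^{\eps,\mu^0}_{(0,t)}$ for its flow, Lemma~\ref{conv-flussi-mis} gives $W_1(\mu_\eps(t),\muo(t))\to 0$ uniformly together with the convergence of the flows and of their $C$-differentials. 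As in the derivation of \eqref{may53}, the Lebesgue point property identifies the blow-up of the flow at time $\tau$: for $\mu^0$-a.e.\ $c$, setting $y:=\Phi^{\mu^0}_{(0,\tau)}(c)$,
\begin{equation*}
\lim_{\eps\to0^+}\frac1\eps\big(\Phi^{\eps,\mu^0}_{(0,\tau)}(c)-\Phi^{\mu^0}_{(0,\tau)}(c)\big)=A(\tau,\muo(\tau),y,\omega(y))-A(\tau,\muo(\tau),y,\uo(\tau)(y))=:z(\tau,c).
\end{equation*}

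Next I would propagate this first-order perturbation to the terminal time by invoking Proposition~\ref{linearized}: the rescaled differences $\tfrac1\eps(\Phi^{\eps,\mu^0}_{(0,t)}-\Phi^{\mu^0}_{(0,t)})$ converge to the field $z(t,\cdot)$ solving the \emph{nonlocal} linearisation of \eqref{Wmayeq}, whose local part is governed by $\Di_{\Phi^{\mu^0}_{(0,t)}(c)}A$ as in \eqref{Dphi-edo} and whose nonlocal part carries the Wasserstein differential $\nabla_\mu A$ integrated against $\mu^0$. Combining optimality $\varphi(\muo(T))\le\varphi(\mu_\eps(T))$ with the local differentiability of $\varphi$ in {\bf (H$\varphi$)} and the chain rule of Proposition~\ref{Wchainrule}, I divide by $\eps$ and pass to the limit to obtain
\begin{equation*}
0\le\int_C\big\langle \nabla_{\muo(T)}\varphi(\muo(T))\big(\Phi^{\mu^0}_{(0,T)}(c)\big),\,z(T,c)\big\rangle\,\di\mu^0(c).
\end{equation*}

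Then I would construct the costate in Lagrangian form by setting $\nuo(t):=\big(\Phi^{\mu^0}_{(0,t)},\,P_t\big)_{\#}\mu^0$, where $P_t(c)\in E_C^*$ solves backward in time the adjoint of the nonlocal equation satisfied by $z$, with terminal datum $P_T(c)=-\nabla_{\muo(T)}\varphi(\muo(T))(\Phi^{\mu^0}_{(0,T)}(c))$; the transpose of the local and nonlocal parts reproduces precisely the two terms of \eqref{form-diff-H}, so that the characteristics $(\Phi^{\mu^0}_{(0,t)}(c),P_t(c))$ solve the Hamiltonian ODE driven by $J\nabla_\nu\mathcal H$ and $\nuo$ is a distributional solution of the continuity equation in \eqref{Wsist}, with $\pi^1_{\#}\nuo(t)=\muo(t)$ and the prescribed terminal condition. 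Well-posedness of this nonlocal backward equation and the $AC$-regularity and compact support of $\nuo$ follow from the uniform bounds in Lemmas~\ref{ext-bnd-inf} and~\ref{Dphi-bnd} via Grönwall. The decisive computation is that, since $P_t$ solves the adjoint of the equation for $z$, the map $t\mapsto\int_C\langle P_t(c),z(t,c)\rangle\,\di\mu^0(c)$ is constant on $[\tau,T]$; evaluating it at $t=T$ against the inequality above and at $t=\tau$ against the needle jump yields
\begin{equation*}
\int_C\big\langle P_\tau(c),\,A(\tau,\muo(\tau),y,\uo(\tau)(y))-A(\tau,\muo(\tau),y,\omega(y))\big\rangle\,\di\mu^0(c)\ge0,
\end{equation*}
which by \eqref{Wham} is exactly $\mathcal H(\tau,\nuo(\tau),\uo(\tau))\ge\mathcal H(\tau,\nuo(\tau),\omega)$; arbitrariness of $\omega\in U$ and of the Lebesgue point $\tau$ then gives the maximality condition \eqref{Wmaxcond}.

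I expect the main obstacle to be the propagation step of the second paragraph: justifying the first-order expansion of $\mu_\eps(t)$ and the exchange of limit and integral in the absence of a linear structure and of local compactness. This is where the chain rule of Proposition~\ref{Wchainrule} and the uniform bounds of Lemmas~\ref{ext-bnd-inf} and~\ref{Dphi-bnd} are essential to keep the rest terms of order $o(\eps)$, and where the nonlocal character of the linearised equation---absent in the Euclidean treatments \cite{BonFra,BonRos}---must be handled consistently, both in the forward equation for $z$ and in the construction of the dual nonlocal costate $P_t$.
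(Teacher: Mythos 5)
Your proposal is correct and follows essentially the same route as the paper's proof: needle variation at a Lebesgue point, reduction of the control perturbation to a first-order displacement at time $\tau$ propagated by the nonlocal linearised equation of Proposition~\ref{linearized}, the chain rule of Proposition~\ref{Wchainrule} to differentiate $\varphi(\mu_\eps(T))$, a backward nonlocal adjoint whose characteristics realise $\nuo$ as a graph measure solving \eqref{Wsist}, and constancy of the duality pairing (the paper's $\mathcal{K}(t)$) to transport the terminal inequality back to $\tau$ — your Lagrangian parametrisation of the costate over $\mu^0$ at time $0$ is merely a cosmetic variant of the paper's parametrisation over $\muo(T)$ via the backward flow $\Phi^{\muo(T)}_{(T,t)}$. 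The one technical point you gloss over is that the needle perturbation map $\mathcal{G}(\eps,\tau,\cdot)$ is a priori defined only for $\eps\ge 0$, whereas Proposition~\ref{linearized} requires Fr\'echet-differentiability at $\eps=0$ on a two-sided interval; the paper handles this by extending $\mathcal{G}$ to $(-\bar\eps,\bar\eps)$ via Lemma~2.11 of \cite{Shva} before invoking the linearisation.
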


In what follows we are concerned with the proof of Theorem \ref{mainres}. \\
First we give a boundedness result for $\nabla_\mu A$ which will be useful to derive the linearised non-local equation.
\begin{lemma}
\label{grad-mu-bnd}
Let $A$ satisfy {\bf (HA1)}-$(i)$ and {\bf (HA2)}-$(vii)$. Then
$$
\|\nabla_\mu A(\tilde c)\|_{\li(E_C;E_C)} \leq L
$$
for every $(\mu,c,u)\in \mathcal{P}_c(C) \times C \times \G$ and a.e. $t\in[0,T]$ and $\mu$-a.e. $\tilde c\in \supp(\mu)$, where $L$ is the Lipschitz constant of $A$. In particular,
$$
\nabla_\mu A\in L^\infty_{\mathcal L\times \mu \times \mu}([0,T]\times C\times \supp(\mu);\li(E_C;E_C)).
$$
\end{lemma}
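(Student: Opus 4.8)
The plan is to fix $t\in[0,T]$, $c\in C$ and $u\in\G$ and to work with the single functional $\phi\colon\PC\to E_C$, $\phi(\mu):=A(t,\mu,c,u)$, which by {\bf (HA2)}-$(vii)$ is locally differentiable at every $\mu\in\PC$ with differential $\nabla_\mu\phi(\mu)=\nabla_\mu A$, and which by {\bf (HA1)}-$(i)$ (freezing $c,\tilde c=c$ and $u=\tilde u$) is $L$-Lipschitz with respect to $W_1$. Fixing $\mu\in\PC$, the goal reduces to showing $\|\nabla_\mu A(\bar c)[v]\|_E\le L\|v\|_E$ for every $\bar c\in\supp(\mu)$ and every $v$ in the dense subspace $\R(C-C)$ of $E_C$; since $\nabla_\mu A(\bar c)\in\li(E_C;E_C)$ is a bounded operator, this yields $\|\nabla_\mu A(\bar c)\|_{\li(E_C;E_C)}\le L$ by density, and the uniformity in $t$ (the constant $L$ is $t$-independent) then gives the claimed $L^\infty$ membership.

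The key point, and where the argument must cope with the lack of a linear structure, is to pick admissible perturbations of $\mu$ that remain in $C$. For fixed $a\in C$, a bounded nonnegative weight $\psi$, and small $h>0$ with $h\|\psi\|_\infty\le 1$, I set $T^a_h(c_1):=(1-h\psi(c_1))c_1+h\psi(c_1)a$, which lies in $C$ by convexity and satisfies $T^a_h(c_1)-c_1=h\psi(c_1)(a-c_1)$. Using the plan ${\bm\gamma}^a:=(\mathrm{id},T^a_h)_\#\mu$, for which $T^a_{h\#}\mu\in\mathcal{P}(B_\mu(R))$ and $W_{2,{\bm\gamma}^a}(\mu,T^a_{h\#}\mu)=O(h)$ (as $\supp(\mu)$ is compact), Definition \ref{Wmudiff} gives $\phi(T^a_{h\#}\mu)-\phi(\mu)=h\int_C\nabla_\mu A(c_1)[\psi(c_1)(a-c_1)]\di\mu(c_1)+o(h)$.

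Mimicking the device used in the proof of Lemma \ref{ext-bnd}, I then subtract the analogous expansion for a second target $b\in C$. The first-order terms combine by linearity into $h\int_C\psi(c_1)\nabla_\mu A(c_1)[a-b]\di\mu(c_1)$, since $(a-c_1)-(b-c_1)=a-b$ is independent of $c_1$, while the left-hand side is controlled through the common plan $(T^a_h,T^b_h)_\#\mu$ by $\|\phi(T^a_{h\#}\mu)-\phi(T^b_{h\#}\mu)\|_E\le L\,W_1(T^a_{h\#}\mu,T^b_{h\#}\mu)\le Lh\|a-b\|_E\int_C\psi\di\mu$. The crucial gain is that the two displacements differ exactly by $h\psi(c_1)(a-b)$, so the distance is linear in $\|a-b\|_E$ rather than in $\|a-\bar c\|_E+\|b-\bar c\|_E$. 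Dividing by $h$ and letting $h\to0^+$ yields $\big\|\int_C\psi(c_1)\nabla_\mu A(c_1)[a-b]\di\mu(c_1)\big\|_E\le L\|a-b\|_E\int_C\psi\di\mu$ for every such $\psi$.

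Finally I localise: taking $\psi=\mathbf 1_{B_r(\bar c)}$ with $\bar c\in\supp(\mu)$ (so $\mu(B_r(\bar c))>0$), dividing by $\mu(B_r(\bar c))$ and letting $r\to0^+$, the continuity of $c_1\mapsto\nabla_\mu A(c_1)$ granted by {\bf (HA2)}-$(vii)$ forces the average of the continuous integrand to converge to its value at $\bar c$, giving $\|\nabla_\mu A(\bar c)[a-b]\|_E\le L\|a-b\|_E$ for all $a,b\in C$. By homogeneity this holds for every $v\in\R(C-C)$, and by density of $\R(C-C)$ in $E_C$ together with boundedness of the operator, for every $v\in E_C$, which is the assertion. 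The main obstacle is precisely the middle step: producing, within the convex constraint, a family of perturbations whose difference isolates an arbitrary direction $a-b$ while preserving the sharp constant $L$; the subsequent localisation is painless thanks to the continuity assumption, which spares us any Lebesgue-differentiation argument on $C$.
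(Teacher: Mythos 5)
Your proof is correct and follows essentially the same strategy as the paper's: perturb $\mu$ by convex pushforwards of mass near a support point toward two targets, subtract the two first-order expansions from Definition \ref{Wmudiff} so that only the fixed direction $a-b$ survives, control the difference via the $W_1$-Lipschitz bound of \textbf{(HA1)}-$(i)$, and localise using the continuity of $\tilde c \mapsto \nabla_\mu A(\tilde c)$ from \textbf{(HA2)}-$(vii)$, finishing by homogeneity and density of $\R(C-C)$ in $E_C$. The only deviation is technical rather than conceptual: the paper uses continuous cutoffs (constant on $B_r(c_0)\cap C$, interpolated on an annulus, equal to a fixed $\bar c$ outside) and removes the annulus by an extra dominated-convergence limit, whereas your indicator weight with the identity outside the ball is equally admissible (Borel pushforwards of compactly supported measures stay in $\mathcal{P}_c(C)$, since the image of $\supp(\mu)$ under your map lies in a compact set, and Definition \ref{Wmudiff} allows any plan), which spares that step.
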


\begin{proof}
Let $\mu\in \PC$, $c\in C$ and $u\in \G$. Fix $c_0\in \supp(\mu)$ and $c_1,c_2,\bar{c}\in C$. We define for $r>0$, $0<\eps<1$ and $i=1,2$
\begin{equation}
\label{Wapp22bis}
\varphi_\eps^i(\tilde c):=\begin{cases}
c_i & \text{ for } \tilde c\in B_r(c_0)\cap C,\\
{\left(1-\tfrac{\|\tilde c\|-r}{\eps}\right)}c_i + \left(\tfrac{\|\tilde c\|-r}{\eps}\right)\bar{c} & \text{ for }\tilde c\in \left(B_{r+\eps}(c_0)\setminus B_r(c_0)\right)\cap C, \\
\bar{c} & \text{ otherwise}.
\end{cases}
\end{equation}
Then $\varphi_\eps^{i}$ are continuous functions from $C$ to $C$ for $i=1,2$. Since $C$ is convex, we deduce that $(1-\eps)Id+\eps\varphi_\eps^{i}\colon C \to C$, and therefore $\mu_\eps^i:=\left((1-\eps)Id+\eps\varphi_\eps^i\right)_{\#}\mu$ are in $\PC$. It follows choosing in Definition \ref{Wmudiff}
$$
\bm\gamma_i= \left((1-\eps)Id+\eps\varphi_\eps^i ,  Id\right)_{\#}\mu \in \Gamma(\mu_\eps^i,\mu)
$$
that
\begin{eqnarray*}
 && A(t,\mu_\eps^2,c,u)-A(t,\mu_\eps^1,c,u) = A(t,\mu_\eps^2,c,u)-A(t,\mu,c,u) -\left(A(t,\mu_\eps^1,c,u)-A(t,\mu,c,u)\right) \\
&& = \eps\int_C \nabla_{\mu}A{\scriptstyle\left(t,\mu,c,u\right)}(\tilde c)\left[\varphi_\eps^2(\tilde c)- \varphi_\eps^1(\tilde c) \right] \di \mu(\tilde c) \nonumber \\
&& + o_R\left(\eps\left(\int_C \|\varphi_\eps^2(\tilde c)-\tilde c\|_E^2 \di \mu(\tilde c)\right)^\frac12\right) + o_R\left(\eps\left(\int_C \|\varphi_\eps^1(\tilde c)-\tilde c\|_E^2 \di \mu(\tilde c)\right)^\frac12\right).
\end{eqnarray*}
Hence, by \eqref{Wapp22bis}, using {\bf(HA1)}-$(i)$ and recalling that $\supp(\mu)$ is compact in $C$, we have
\begin{eqnarray}
\label{Wapp22}
&& \left\| \int_{B_{r+\eps}(c_0)\cap C} \nabla_{\mu}A{\scriptstyle\left(t,\mu,c,u\right)}(\tilde c)\left[\varphi_\eps^2(\tilde c)-\varphi_\eps^1(\tilde c) \right] \di \mu(\tilde c) \right\|_E \nonumber \\
&& \leq \frac{1}{\eps}\left\| A(t,\mu_\eps^2,c,u)-A(t,\mu_\eps^1,c,u) \right\|_E + \left\|\frac{1}{\eps}o_R\left(M\eps\right)\right\|_E \nonumber \\
&& \leq \frac{L}{\eps} W_1(\mu_\eps^1,\mu_\eps^2) + \left\|\frac{1}{\eps}o_R\left(M\eps\right)\right\|_E \nonumber \\
&& \leq L \|c_1-c_2\|_E \mu\left(B_{r+\eps}(c_0)\cap C\right) + \left\|\frac{1}{\eps}o_R\left(M\eps\right)\right\|_E,
\end{eqnarray}
where $M$ is a positive constant dependent on $\bar c, c_1, c_2$ and $\supp(\mu)$.
Now, by Definition~\ref{Wmudiff}, $\nabla_{\mu}A{\scriptstyle\left(t,\mu,c,u\right)}\in L_\mu^2(C;\li(E_C;E_C))$. Moreover,
$$
\|\varphi_\eps^2(\tilde c)-\varphi_\eps^1(\tilde c)\|_E\leq \left(\|c_1\|_E+\|c_2\|_E+\|\bar c\|_E\right)\in L^2_\mu(C;E_C)
$$
and
$$
\varphi_\eps^2(\tilde c)-\varphi_\eps^1(\tilde c) \to (c_2-c_1)\chi_{\{B_r(c_0)\cap C\}} \quad\text{pointwise in }E.
$$
Therefore, after an application of the Lebesgue theorem for general measures, we infer that as $\eps \to 0^+$
\begin{equation}
\label{Wapp23}
\left\| \frac{1}{\mu(B_{r}(c_0)\cap C)}\int_{B_{r}(c_0)\cap C} \nabla_{\mu}A{\scriptstyle\left(t,\mu,c,u\right)}(\tilde c)\left[c_2-c_1\right] \di \mu(\tilde c) \right\|_E \leq L\|c_2-c_1\|_E.
\end{equation}
Finally, by {\bf (HA2)}-$(vii)$, $\nabla_{\mu}A{\scriptstyle\left(t,\mu,c,u\right)}\in C^0(C ;\li(E_C;E_C))$ for a.e. $t\in[0,T]$. Then, applying Lebesgue differentiation theorem to \eqref{Wapp23}, we deduce as $r\to 0^+$ that
$$
\|\nabla_{\mu}A{\scriptstyle\left(t,\mu,c,u\right)}(c_0)\left[c_2-c_1\right]\|_E \leq L\|c_2-c_1\|_E
$$
for any $c,c_1,c_2\in C, \mu\in \PC, u\in \G$ and $c_0\in\supp(\mu)$. Thanks to the arbitrariness of $c_2-c_1\in E_C$, the result follows.
\end{proof}

Now we derive the expression of the linearised Cauchy problem associated to the non-local continuity equation \eqref{Wmayeq}.

\begin{proposition}
\label{linearized}
Let $V$ be a closed and bounded subset of $C$ and $\mu^0 \in \mathcal{P}_c(V)$. Assume {\bf (HA1)}-{\bf (HA2)}. Fix $u\in\U$ and let $\mu(t)$ and $\Phi_{(0,t)}^{\mu^0}$ be the solution of \eqref{Wmayeq} and the associated non-local flow defined in \eqref{flow} respectively. \\
Let $\overline\varepsilon >0 $ and $\mathcal{G}\colon (-\overline\varepsilon,\overline\varepsilon)\times V \to C$ be a continuous map such that:
\begin{itemize}
\item[$(i)$] $\mathcal{G}(0,\cdot)=Id_V$ and the map $\eps \mapsto \mathcal{G}(\eps,c)\in C\subset E$ is Fr\'echet-differentiable at $\eps=0$ uniformly in $V$;
\item[$(ii)$] the map $\mathcal{F}\colon  V\to E_C$ which is defined by $c\mapsto \mathcal{F}(c):=\frac{\di}{\di\varepsilon} \mathcal{G}(\varepsilon,c)\big|_{\eps=0}$ belongs to $L^{\infty}_{\mu^0}(V;E_C)$;
\item[$(iii)$] $\supp((\Phi_{(0,t)}^{\mu^0} \circ\mathcal{G}(\eps,\cdot))_{\#}\mu^0)\subset V$ for every $\eps\in(-\overline\varepsilon,\overline\eps)$.
\end{itemize}
Set $\mu_\eps^0:=\mathcal{G}(\eps,\cdot)_{\#}\mu^0$. Then, the map $(-\overline{\varepsilon},\overline\varepsilon)\ni \varepsilon \mapsto \Phi_{(0,t)}^{\mu_\eps^0}(c)\in C$ is Fr\'echet-differentiable at $\eps=0$ for all $(t,c)\in [0,T] \times V$. Moreover, its differential $v(t,c)$ is continuous with respect to $c$ and it is the unique solution of the non-local Cauchy problem
\begin{equation}
\label{lineq}
\begin{cases}
\frac{\di}{\di t} v(t,c) = \Di_{\Phi_{(0,t)}^{\mu^0}(c)} A {\scriptstyle \left(t,\mu(t),\Phi_{(0,t)}^{\mu^0}(c),u\left(t,\Phi_{(0,t)}^{\mu^0}(c)\right)\right)}[v(t,c)] \\
\,\,\,\,\,\,\,\,+\displaystyle \int_C \nabla_{\mu(t)}A{\scriptstyle \left(t,\mu(t),\Phi_{(0,t)}^{\mu^0}(c),u\left(t,\Phi_{(0,t)}^{\mu^0}(c)\right)\right)}\left(\Phi_{(0,t)}^{\mu^0}(\tilde c)\right)\left[\Di_{\tilde c} \Phi_{(0,t)}^{\mu^0}(\tilde c)\mathcal{F}(\tilde c) + v(t,\tilde c) \right] \di\mu^0(\tilde c),  \\
v(0,c)=0.
\end{cases}
\end{equation}
\end{proposition}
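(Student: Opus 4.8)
The plan is to prove Fréchet-differentiability by showing that the incremental quotients
$$
v_\eps(t,c):=\frac{\Phi_{(0,t)}^{\mu_\eps^0}(c)-\Phi_{(0,t)}^{\mu^0}(c)}{\eps}
$$
converge, uniformly for $(t,c)\in[0,T]\times V$, to the unique solution $v$ of the linear non-local problem \eqref{lineq}; the identification of the differential and the initial condition $v(0,c)=0$ (since $\Phi_{(0,0)}^{\mu_\eps^0}(c)=c$ for every $\eps$) then come for free. The $\eps$-dependence of $\Phi_{(0,t)}^{\mu_\eps^0}(c)$ enters through two channels: the velocity field is evaluated along the perturbed curve $\mu_\eps(t)=(\Phi_{(0,t)}^{\mu_\eps^0})_{\#}\mu_\eps^0$, and the perturbed initial datum $\mu_\eps^0=\mathcal{G}(\eps,\cdot)_{\#}\mu^0$ displaces the Lagrangian trajectories. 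I would linearise these channels separately: the measure channel through the local differentiability of $A$ in $\mu$ (Definition \ref{Wmudiff}), and the initial-datum channel through the first-order expansion $\mathcal{G}(\eps,c)=c+\eps\mathcal{F}(c)+o(\eps)$ of hypotheses $(i)$--$(ii)$ combined with the $C$-differentiability of the flow established in Lemma \ref{Dphi-bnd}.

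First I would establish well-posedness of \eqref{lineq}. In integral form, $v$ is a fixed point of the affine map sending $w$ to
$$
(t,c)\mapsto \int_0^t \Di_{\Phi_{(0,s)}^{\mu^0}(c)}A\,[w(s,c)]\,\di s + \int_0^t\!\!\int_C \nabla_{\mu(s)}A\big(\Phi_{(0,s)}^{\mu^0}(\tilde c)\big)\big[\Di_{\tilde c}\Phi_{(0,s)}^{\mu^0}[\mathcal{F}(\tilde c)] + w(s,\tilde c)\big]\di\mu^0(\tilde c)\,\di s,
$$
which I would set up on bounded continuous $E_C$-valued functions of $c$ over $\supp(\mu^0)$. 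The operator norms $\|\Di_{c}A\|$ and $\|\nabla_\mu A(\tilde c)\|$ are both controlled by the Lipschitz constant $L$ thanks to Lemmas \ref{ext-bnd-inf} and \ref{grad-mu-bnd}, the coefficients are continuous in $c$ and $\tilde c$ by {\bf (HA2)}-$(vi)$--$(vii)$ and by the continuity of the flow \eqref{flow-cont}, and $\Di_{\tilde c}\Phi_{(0,s)}^{\mu^0}[\mathcal{F}(\tilde c)]$ is $\mu^0$-integrable by Lemma \ref{Dphi-bnd} and hypothesis $(ii)$. A Grönwall/Picard argument then yields existence, uniqueness and the continuity of $v(t,\cdot)$ claimed in the statement.

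Next I would derive the integral identity approximately satisfied by $v_\eps$. Subtracting the Duhamel representations \eqref{flow} for $\mu_\eps^0$ and for $\mu^0$ and dividing by $\eps$, I split the velocity increment into a \emph{local} part, where only the state argument moves from $\Phi_{(0,s)}^{\mu^0}(c)$ to $\Phi_{(0,s)}^{\mu_\eps^0}(c)$, and a \emph{non-local} part, where only the measure moves from $\mu(s)$ to $\mu_\eps(s)$. For the local part I would use the $C$-differentiability of $c\mapsto A(s,\mu,c,u(s,c))$ from {\bf (HA2)}-$(vi)$, producing $\Di_{\Phi_{(0,s)}^{\mu^0}(c)}A[v_\eps(s,c)]$ up to an infinitesimal remainder, after replacing $\mu_\eps(s)$ by $\mu(s)$ via the continuity of $\Di A$ in its $\mu$-entry and the convergence $W_1(\mu_\eps(s),\mu(s))\to0$ of Lemma \ref{conv-flussi-mis}$(b)$. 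For the non-local part I would apply Definition \ref{Wmudiff} along the transport plan
$$
\bm\gamma_\eps:=\big(\Phi_{(0,s)}^{\mu^0},\ \Phi_{(0,s)}^{\mu_\eps^0}\!\circ\mathcal{G}(\eps,\cdot)\big)_{\#}\mu^0\in\Gamma\big(\mu(s),\mu_\eps(s)\big)
$$
and push the resulting integral forward through $\mu^0$, which leaves, up to the error $\eps^{-1}o_R(W_{2,\bm\gamma_\eps}(\mu(s),\mu_\eps(s)))$, a term governed by $\eps^{-1}\big(\Phi_{(0,s)}^{\mu_\eps^0}(\mathcal{G}(\eps,\tilde c))-\Phi_{(0,s)}^{\mu^0}(\tilde c)\big)$. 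The decisive algebraic step is the chain-rule decomposition
$$
\tfrac1\eps\big(\Phi_{(0,s)}^{\mu_\eps^0}(\mathcal{G}(\eps,\tilde c))-\Phi_{(0,s)}^{\mu^0}(\tilde c)\big)= v_\eps(s,\tilde c) + \tfrac1\eps\big(\Phi_{(0,s)}^{\mu_\eps^0}(\mathcal{G}(\eps,\tilde c))-\Phi_{(0,s)}^{\mu_\eps^0}(\tilde c)\big),
$$
whose second summand converges to $\Di_{\tilde c}\Phi_{(0,s)}^{\mu^0}[\mathcal{F}(\tilde c)]$ by the $C$-differentiability of the flow together with the convergence $\Di_{\tilde c}\Phi_{(0,s)}^{\mu_\eps^0}\to\Di_{\tilde c}\Phi_{(0,s)}^{\mu^0}$ from Lemma \ref{conv-flussi-mis}$(c)$, and realises the chain-rule expansion of Proposition \ref{Wchainrule} for the variation of $\mathcal{G}$ transported by the frozen flow; the first summand is exactly $v_\eps(s,\tilde c)$ and records the perturbation of the flow itself, so that $v_\eps$ reproduces the bracket of \eqref{lineq} with $v_\eps$ still present on the right-hand side.

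The main obstacle is the uniform control of all the remainders, which must vanish after division by $\eps$ uniformly in $c\in V$ and in the $L^1_{\mu^0}$-average over $\tilde c$. The key quantitative input is $W_{2,\bm\gamma_\eps}(\mu(s),\mu_\eps(s))=O(\eps)$, which I would obtain from the uniform-in-$\eps$ Lipschitz dependence of the flows on their argument (Lemma \ref{Dphi-bnd}) and from $\mathcal{G}(\eps,\tilde c)=\tilde c+\eps\mathcal{F}(\tilde c)+o(\eps)$, so that $\eps^{-1}o_R(W_{2,\bm\gamma_\eps})\to0$; the $C$-differentiability remainders, the $\mu$-continuity corrections of $\Di A$ and $\nabla_\mu A$, and the flow-quotient error are dominated uniformly by Lemmas \ref{ext-bnd-inf}, \ref{Dphi-bnd}, \ref{grad-mu-bnd} and by compactness of $\supp(\mu^0)$, and are killed by dominated convergence. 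Collecting everything, $v_\eps$ solves the integral form of \eqref{lineq} up to a remainder $R_\eps$ with $\|R_\eps\|\to0$; subtracting the equation for $v$ and estimating both coefficients by $L$ gives
$$
\|v_\eps(t,\cdot)-v(t,\cdot)\|_{\infty}\le \|R_\eps\|_{\infty} + 2L\int_0^t \|v_\eps(s,\cdot)-v(s,\cdot)\|_{\infty}\,\di s,
$$
so that Grönwall forces $v_\eps\to v$ uniformly. This establishes Fréchet-differentiability of $\eps\mapsto\Phi_{(0,t)}^{\mu_\eps^0}(c)$ at $\eps=0$ with differential $v(t,c)$, whose continuity in $c$ was obtained in the well-posedness step.
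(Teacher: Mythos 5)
Your proposal is correct and follows essentially the same route as the paper's proof: the blow-up quotients $v_\eps(t,c)$, an a priori Gr\"onwall bound, a local/measure splitting of the velocity increment hinging on the same key identity $\tfrac1\eps\bigl(\Phi_{(0,s)}^{\mu_\eps^0}(\mathcal{G}(\eps,\tilde c))-\Phi_{(0,s)}^{\mu^0}(\tilde c)\bigr)=v_\eps(s,\tilde c)+\Di_{\tilde c}\Phi_{(0,s)}^{\mu_\eps^0}[\mathcal{F}(\tilde c)]+o(1)$, domination of the remainders via Lemmas \ref{ext-bnd-inf}, \ref{Dphi-bnd}, \ref{grad-mu-bnd} and \ref{conv-flussi-mis}, a concluding Gr\"onwall argument, and well-posedness of \eqref{lineq} by the same Picard scheme as Lemma \ref{ODE-continua}. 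The only differences are organisational: you linearise the measure channel in a single application of Definition \ref{Wmudiff} along the plan $\bigl(\Phi_{(0,s)}^{\mu^0},\Phi_{(0,s)}^{\mu_\eps^0}\!\circ\mathcal{G}(\eps,\cdot)\bigr)_{\#}\mu^0$ anchored directly at $\mu(s)$, where the paper inserts the intermediate measure $\bigl(\Phi_{(0,s)}^{\mu^0}\!\circ\mathcal{G}(\eps,\cdot)\bigr)_{\#}\mu^0$ and treats the resulting pieces II and III (with its continuity correction III$'$) separately via Proposition \ref{Wchainrule} — just note that your claimed bound $W_{2,\bm\gamma_\eps}=O(\eps)$ also requires the flow-stability estimate $\|\Phi_{(0,s)}^{\mu_\eps^0}(\tilde c)-\Phi_{(0,s)}^{\mu^0}(\tilde c)\|_E=O(\eps)$ (the paper's Step~1, from {\bf (HA1)}-$(i)$ and $W_1(\mu_\eps^0,\mu^0)=O(\eps)$), not merely the Lipschitz dependence of the flows on their argument.
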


\begin{proof}
For notational convenience we define $\mathcal{G}^\varepsilon(c):=\mathcal{G}(\varepsilon,c)$ for every $(\eps,c)\in (-\overline\varepsilon,\overline\varepsilon)\times V$ and $\mu_\eps(t):=\left(\Phi_{(0,t)}^{\mu^0_\eps}\right)_{\#}\mu^0_\eps$. Note that $\mu_\eps(t)$ is the unique solution of \eqref{Wmayeq} with initial datum $\mu^0_\eps$. Moreover, by {\bf (HA2)}, we can apply Lemma \ref{ODE-continua} obtaining that there exists a unique distributional solution $v(t,c)$ of \eqref{lineq} which is continuous with respect to $c$. \\
Let $v_\eps\colon  [0,T]\times V \to E_C$ be
\begin{equation}
\label{Wapp3}
v_\eps(t,c):= \frac{\Phi_{(0,t)}^{\mu_\eps^0}(c) - \Phi_{(0,t)}^{\mu^0}(c)}{\eps} \qquad \mbox{for } \eps \in (-\overline\varepsilon,\overline\varepsilon).
\end{equation}
Then the statement is equivalent to
\begin{equation}
\label{goalin}
\lim_{\eps\to 0} v_\eps(t,c)=v(t,c) \qquad \mbox{for any }(t,c)\in [0,T] \times V.
\end{equation}
To prove \eqref{goalin} we divide the proof into 4 steps. \\

{\bf Step 1.} In this step we prove that $v_\eps(t,c)$ is uniformly bounded in $[0,T] \times V$ for $|\eps|$ small enough. \\
By \eqref{flow} we have for a positive constant $L_V$ only dependent on $V$ that
\begin{eqnarray*}
\|v_\eps(t,c)\|_E &\leq & \frac{1}{|\eps|}\int_0^t \left\|A{\scriptstyle \left(s, \mu_\eps(s), \Phi_{(0,s)}^{\mu_\eps^0}(c), u\left(s, \Phi_{(0,s)}^{\mu_\eps^0}(c)\right)\right)}-A{\scriptstyle\left(s, \mu(s), \Phi_{(0,s)}^{\mu^0}(c), u\left(s, \Phi_{(0,s)}^{\mu^0}(c)\right) \right)}\right\|_E \di s \nonumber \\
& \stackrel{{(\bf HA1)}-(i)}\leq & \frac{L}{|\eps|} \int_0^t \left\{ \left\|\Phi_{(0,s)}^{\mu_\eps^0}(c) - \Phi_{(0,s)}^{\mu^0}(c)\right\|_E + W_1\left(\mu_\eps(s),\mu(s)\right) \right\}\di s \nonumber \\
& \stackrel{\eqref{solrep}}\leq & L \int_0^t \|v_\eps(s,c)\|_E \, \di s + \frac{L}{|\eps|}\int_0^t W_1\left(\left(\Phi_{(0,s)}^{\mu_\eps^0}\right)_{\#}\mu_\eps^0,\left(\Phi_{(0,s)}^{\mu^0}\right)_{\#}\mu^0\right) \di s \\
& \leq & L \int_0^t \|v_\eps(s,c)\|_E \, \di s + \frac{L W_1\left(\mu_\eps^0,\mu^0 \right)}{|\eps|}\int_0^t \ex^{2L_V s} \di s.
\end{eqnarray*}
where in the last inequality we have applied Theorem 4.1 of \cite{AFMS}. Therefore we deduce that
\begin{equation}
\label{Wapp1}
\|v_\eps(t,c)\|_E \leq L \int_0^t \|v_\eps(s,c)\|_E \, \di s \, +\, \left(\ex^{2LT}-1\right)\frac{W_1\left(\mu_\eps^0,\mu^0 \right)}{2|\eps|}.
\end{equation}
To estimate the second term on the right-hand side of \eqref{Wapp1}, we use the definition of $1$-Wasserstain distance recalling that $(\mathcal{G}^\eps , Id)_{\#}\mu^0\in\Gamma(\mu_\eps^0,\mu^0)$ to obtain
\begin{eqnarray*}
W_1\left(\mu_\eps^0,\mu^0 \right) & = & W_1\left(\mathcal{G}^\eps_{\#}\mu^0,\mu^0 \right) \leq \int_V \left\|\mathcal{G}(\eps,\tilde c)-\tilde c\right\|_E \di\mu^0(\tilde c) \\
& \stackrel{(i)-(ii)}= &\int_V \|\eps\mathcal{F}(\tilde c) +o(\eps)\|_E \di \mu^0(\tilde c) \stackrel{(ii)}\leq 2|\eps| \|\mathcal{F}\|_{L^{\infty}_{\mu^0}(V;E_C)}.
\end{eqnarray*}
Hence, in view of \eqref{Wapp1}, we infer that
$$
\|v_\eps(t,c)\|_E \leq L \int_0^t \|v_\eps(s,c)\|_E \, \di s \, +\,\left(\ex^{2LT}-1\right)\|\mathcal{F}\|_{L^{\infty}_{\mu^0}(V;E_C)},
$$
which implies, applying the Gr\"onwall inequality, that for $|\eps|$ small enough
\begin{equation}
\label{Wapp2}
\|v_\eps(t,c)\|_E \leq \ex^{3LT}\|\mathcal{F}\|_{L^{\infty}_{\mu^0}(V;E_C)} \qquad \mbox{uniformly in } [0,T]\times V.
\end{equation}

{\bf Step 2.} In this step we explicitly write $v_\eps(t,c)-v(t,c)$. \\
By \eqref{Wapp3} and \eqref{lineq}, we have, noting that $v_\eps(0,c)=0$, that for every $(t,c)\in [0,T]\times V$
\begin{eqnarray}
\label{Wapp4}
&&v_\eps(t,c) - v(t,c) = \int_0^t \Di_{\Phi_{(0,s)}^{\mu^0}(c)} A {\scriptstyle \left(s,\mu(s),\Phi_{(0,s)}^{\mu^0}(c),u\left(s,\Phi_{(0,s)}^{\mu^0}(c)\right)\right)}[v_\eps(s,c)-v(s,c)] \di s \nonumber \\
&&\,\,\,\,\,\,\,\,+ \int_0^t \!\!\int_C \nabla_{\mu(s)}A{\scriptstyle \left(s,\mu(s),\Phi_{(0,s)}^{\mu^0}(c),u\left(s,\Phi_{(0,s)}^{\mu^0}(c)\right)\right)}\left(\Phi_{(0,s)}^{\mu^0}(\tilde c)\right)\left[v_\eps(s,\tilde c) - v(s,\tilde c) \right] \di\mu^0(\tilde c) \di s \nonumber \\
&&\,\,\,\,\,\,\,\, + \int_0^t r_\eps(s,c) \di s,
\end{eqnarray}
where
\begin{eqnarray}
\label{Wapp5}
&&r_\eps(t,c)  = \frac{1}{\eps}\left\{A{\scriptstyle \left(t, \mu_\eps(t), \Phi_{(0,t)}^{\mu_\eps^0}(c), u\left(t, \Phi_{(0,t)}^{\mu_\eps^0}(c)\right)\right)}-A{\scriptstyle\left(t, \mu(t), \Phi_{(0,t)}^{\mu^0}(c), u\left(t, \Phi_{(0,t)}^{\mu^0}(c)\right) \right)}\right\} \nonumber \\
&& - \Di_{\Phi_{(0,t)}^{\mu^0}(c)} A {\scriptstyle \left(t,\mu(t),\Phi_{(0,t)}^{\mu^0}(c),u\left(t,\Phi_{(0,t)}^{\mu^0}(c)\right)\right)}[v_\eps(t,c)]  \\
&& - \int_C \nabla_{\mu(t)}A{\scriptstyle \left(t,\mu(t),\Phi_{(0,t)}^{\mu^0}(c),u\left(t,\Phi_{(0,t)}^{\mu^0}(c)\right)\right)}\left(\Phi_{(0,t)}^{\mu^0}(\tilde c)\right)\left[\Di_{\tilde c} \Phi_{(0,t)}^{\mu^0}(\tilde c)\mathcal{F}(\tilde c) +v_\eps(s,\tilde c)\right] \di\mu^0(\tilde c). \nonumber
\end{eqnarray}
By adding and subtracting to \eqref{Wapp5} the term
$$
A{\scriptstyle \left(t, \mu(t), \Phi_{(0,t)}^{\mu_\eps^0}(c), u\left(t, \Phi_{(0,t)}^{\mu_\eps^0}(c)\right)\right)} + A{\scriptstyle \left(t, \left(\Phi_{(0,t)}^{\mu^0}\circ \mathcal{G}^\eps\right)_{\#}\mu^0, \Phi_{(0,t)}^{\mu_\eps^0}(c), u\left(t, \Phi_{(0,t)}^{\mu_\eps^0}(c)\right)\right)}
$$
we obtain
\begin{equation}
\label{Wapp6}
\|r_\eps(t,c)\|_E \leq \mbox{I} + \mbox{II} + \mbox{III},
\end{equation}
where
\begin{eqnarray}
\label{Wapp7}
\mbox{I} &:= &\left\|\frac{1}{\eps}\left\{A{\scriptstyle \left(t, \mu(t), \Phi_{(0,t)}^{\mu_\eps^0}(c), u\left(t, \Phi_{(0,t)}^{\mu_\eps^0}(c)\right)\right)}-A{\scriptstyle\left(t, \mu(t), \Phi_{(0,t)}^{\mu^0}(c), u\left(t, \Phi_{(0,t)}^{\mu^0}(c)\right) \right)}\right\}\right. \nonumber \\
&& \,\,\, - \left.\Di_{\Phi_{(0,t)}^{\mu^0}(c)} A {\scriptstyle \left(t,\mu(t),\Phi_{(0,t)}^{\mu^0}(c),u\left(t,\Phi_{(0,t)}^{\mu^0}(c)\right)\right)}[v_\eps(t,c)]
      \right\|_E,
\end{eqnarray}
\begin{eqnarray}
\label{Wapp8}
\mbox{II} &:= & \left\|\frac{1}{\eps}\left\{A{\scriptstyle \left(t, \left(\Phi_{(0,t)}^{\mu^0}\circ \mathcal{G}^\eps\right)_{\#}\mu^0, \Phi_{(0,t)}^{\mu_\eps^0}(c), u\left(t, \Phi_{(0,t)}^{\mu_\eps^0}(c)\right)\right)} - A{\scriptstyle \left(t, \mu(t), \Phi_{(0,t)}^{\mu_\eps^0}(c), u\left(t, \Phi_{(0,t)}^{\mu_\eps^0}(c)\right)\right)}\right\}\right. \nonumber \\
&& \!\!\!\!\!\! - \left.\int_C\nabla_{\mu(t)}A{\scriptstyle\left(t,\mu(t),\Phi_{(0,t)}^{\mu^0}(c),u\left(t,\Phi_{(0,t)}^{\mu^0}(c)\right)\right)}\left(\Phi_{(0,t)}^{\mu^0}(\tilde c)\right)\left[\Di_{\tilde c}\Phi_{(0,t)}^{\mu^0}(\tilde c)\mathcal{F}(\tilde c)\right] \di\mu^0(\tilde c) \right\|_E,
\end{eqnarray}
and
\begin{eqnarray}
\label{Wapp9}
\mbox{III} &:= & \left\|\frac{1}{\eps}\left\{A{\scriptstyle \left(t, \mu_\eps(t), \Phi_{(0,t)}^{\mu_\eps^0}(c), u\left(t, \Phi_{(0,t)}^{\mu_\eps^0}(c)\right)\right)}-A{\scriptstyle \left(t, \left(\Phi_{(0,t)}^{\mu^0}\circ \mathcal{G}^\eps\right)_{\#}\mu^0, \Phi_{(0,t)}^{\mu_\eps^0}(c), u\left(t, \Phi_{(0,t)}^{\mu_\eps^0}(c)\right)\right)}\right\}\right. \nonumber \\
&& \,\,\, - \left.\int_C \nabla_{\mu(t)}A{\scriptstyle \left(t,\mu(t),\Phi_{(0,t)}^{\mu^0}(c),u\left(t,\Phi_{(0,t)}^{\mu^0}(c)\right)\right)}\left(\Phi_{(0,t)}^{\mu^0}(\tilde c)\right)\left[v_\eps(t,\tilde c)\right] \di\mu^0(\tilde c) \right\|_E.
\end{eqnarray}

{\bf Step 3.} In this step we prove that $r_\eps(t,c)\to 0$ as $\eps \to 0$ in $E$ for any $(t,c)\in[0,T]\times V$. \\
To estimate I, we rewrite the right-hand side of \eqref{Wapp7} in the following way
\begin{eqnarray*}
I &=& \frac{1}{\|\Phi_{(0,t)}^{\mu_\eps^0}(c)-\Phi_{(0,t)}^{\mu^0}(c)\|_E}\left\{\left\|A{\scriptstyle \left(t, \mu(t), \Phi_{(0,t)}^{\mu_\eps^0}(c), u\left(t, \Phi_{(0,t)}^{\mu_\eps^0}(c)\right)\right)}-A{\scriptstyle\left(t, \mu(t), \Phi_{(0,t)}^{\mu^0}(c), u\left(t, \Phi_{(0,t)}^{\mu^0}(c)\right) \right)}\right.\right. \nonumber \\
&& \,\,\, - \left.\left.\Di_{\Phi_{(0,t)}^{\mu^0}(c)} A {\scriptstyle \left(t,\mu(t),\Phi_{(0,t)}^{\mu^0}(c),u\left(t,\Phi_{(0,t)}^{\mu^0}(c)\right)\right)}[\Phi_{(0,t)}^{\mu_\eps^0}(c)-\Phi_{(0,t)}^{\mu^0}(c)]
      \right\|_E\right\}\|v_\eps(t,c)\|_E,
\end{eqnarray*}
then, it follows from {\bf (HA2)}-$(vi)$ and using \eqref{Wapp2} that $I\to 0$ as $\eps \to 0$ for any $(t,c)\in [0,T]\times V$.

The fact of II $\to 0$ as $\eps \to 0$ follows from Lemma \ref{Dphi-bnd} and assumption {\bf (HA2)}-$(vii)$ which allow us to apply Proposition \ref{Wchainrule} in \eqref{Wapp8} with $\phi(\mu)=A(t,\mu,c,u)$ and $\psi=\Phi_{(0,t)}^{\mu^0}$. Thus we have
\begin{equation}
\label{Wapp20bis}
\lim_{\eps\to 0} \left\{\mbox{I}+\mbox{II}\right\}=0.
\end{equation}

The estimate of term III is trickier. By {\bf (HA2)}-$(vii)$, recalling Definition \ref{Wmudiff} and that $\mu_\eps(t)=\left(\Phi_{(0,t)}^{\mu_\eps^0}\circ \mathcal{G}^\eps\right)_{\#}\mu^0$, we have
\begin{align}
\label{Wapp10}
& A{\scriptstyle \left(t,\mu_\eps(t), \Phi_{(0,t)}^{\mu_\eps^0}(c), u\left(t, \Phi_{(0,t)}^{\mu_\eps^0}(c)\right)\right)} = A{\scriptstyle \left(t, \left(\Phi_{(0,t)}^{\mu^0}\circ \mathcal{G}^\eps\right)_{\#}\mu^0, \Phi_{(0,t)}^{\mu_\eps^0}(c), u\left(t, \Phi_{(0,t)}^{\mu_\eps^0}(c)\right)\right)} \nonumber \\
& \,\,\, + \int_{C\times C} \nabla_{\left(\Phi_{(0,t)}^{\mu^0}\circ \mathcal{G}^\eps\right)_{\#}\mu^0}A{\scriptstyle \left(t, \left(\Phi_{(0,t)}^{\mu^0}\circ \mathcal{G}^\eps\right)_{\#}\mu^0, \Phi_{(0,t)}^{\mu_\eps^0}(c), u\left(t, \Phi_{(0,t)}^{\mu_\eps^0}(c)\right)\right)}       (c_1)[c_2-c_1] \di{\bm\gamma}(c_1,c_2) \nonumber \\
& \,\,\, + o\left(W_{2,{\bm\gamma}}\left(\left(\Phi_{(0,t)}^{\mu_\eps^0}\circ \mathcal{G}^\eps\right)_{\#}\mu^0, \left(\Phi_{(0,t)}^{\mu^0}\circ \mathcal{G}^\eps\right)_{\#}\mu^0    \right)\right) \nonumber \\
&\stackrel{\eqref{Wapp3}}= A{\scriptstyle \left(t, \left(\Phi_{(0,t)}^{\mu^0}\circ \mathcal{G}^\eps\right)_{\#}\mu^0, \Phi_{(0,t)}^{\mu_\eps^0}(c), u\left(t, \Phi_{(0,t)}^{\mu_\eps^0}(c)\right)\right)} \nonumber \\
&\,\,\, + \eps\int_C \nabla_{\left(\Phi_{(0,t)}^{\mu^0}\circ \mathcal{G}^\eps\right)_{\#}\mu^0}A{\scriptstyle \left(t, \left(\Phi_{(0,t)}^{\mu^0}\circ \mathcal{G}^\eps\right)_{\#}\mu^0, \Phi_{(0,t)}^{\mu_\eps^0}(c), u\left(t, \Phi_{(0,t)}^{\mu_\eps^0}(c)\right)\right)}\left(\Phi_{(0,t)}^{\mu^0}(\mathcal{G}^\eps(\tilde c))\right)\left[v_\eps(t,\mathcal{G}^\eps(\tilde c))\right] \di \mu^0(\tilde c) \nonumber \\
&\,\,\, + o\left(|\eps|\left(\int_C \|v_\eps(t,\mathcal{G}^\eps(\tilde c))\|_E^2 \di \mu^0(\tilde c)\right)^{\frac12}\right),
\end{align}
where in the last equality we have chosen $\bm\gamma=\left(\Phi_{(0,t)}^{\mu_\eps^0}\circ \mathcal{G}^\eps , \Phi_{(0,t)}^{\mu^0}\circ \mathcal{G}^\eps  \right)_{\#}\mu^0$. Combining~\eqref{Wapp9} and~\eqref{Wapp10}, we deduce
\begin{align}
\label{Wapp11}
\nonumber\mbox{III} & \stackrel{\eqref{solrep}}= \left\|\int_C \nabla_{\left(\Phi_{(0,t)}^{\mu^0}\circ \mathcal{G}^\eps\right)_{\#}\mu^0}A{\scriptstyle \left(t, \left(\Phi_{(0,t)}^{\mu^0}\circ \mathcal{G}^\eps\right)_{\#}\mu^0, \Phi_{(0,t)}^{\mu_\eps^0}(c), u\left(t, \Phi_{(0,t)}^{\mu_\eps^0}(c)\right)\right)}\left(\Phi_{(0,t)}^{\mu^0}(\mathcal{G}^\eps(\tilde c))\right)\left[v_\eps(t,\mathcal{G}^\eps(\tilde c))\right] \di \mu^0(\tilde c) \right. \nonumber \\
\nonumber& \,\,\, -\int_C\nabla_{\left(\Phi_{(0,t)}^{\mu^0}\right)_{\#}\mu^0}A{\scriptstyle\left(t,\left(\Phi_{(0,t)}^{\mu^0}\right)_{\#}\mu^0,\Phi_{(0,t)}^{\mu^0}(c),u\left(t,\Phi_{(0,t)}^{\mu^0}(c)\right)\right)}\left(\Phi_{(0,t)}^{\mu^0}(\tilde c)\right)\left[v_\eps(t,\tilde c)\right] \di\mu^0(\tilde c) \nonumber \\
& \,\,\, \left.+\frac{1}{\eps}o\left(|\eps|\left(\int_C \|v_\eps(t,\mathcal{G}^\eps(\tilde c))\|_E^2 \di \mu^0(\tilde c)\right)^{\frac12}\right) \right\|_E.
\end{align}
By adding and subtracting to the terms within the norm in \eqref{Wapp11} the term
$$
\int_C\nabla_{\left(\Phi_{(0,t)}^{\mu^0}\right)_{\#}\mu^0}A{\scriptstyle\left(t,\left(\Phi_{(0,t)}^{\mu^0}\right)_{\#}\mu^0,\Phi_{(0,t)}^{\mu^0}(c),u\left(t,\Phi_{(0,t)}^{\mu^0}(c)\right)\right)}\left(\Phi_{(0,t)}^{\mu^0}(\tilde c)\right)\left[v_\eps(t,\mathcal{G}^\eps(\tilde c))\right] \di\mu^0(\tilde c)
$$
we obtain
\begin{equation}
\label{Wapp12}
\mbox{III}\leq \mbox{III}' + \mbox{III}'' + \left\|\frac{1}{\eps}o\left(|\eps|\left(\int_C \|v_\eps(t,\mathcal{G}^\eps(\tilde c))\|_E^2 \di \mu^0(\tilde c)\right)^{\frac12}\right) \right\|_E,
\end{equation}
where
\begin{align*}
\mbox{III}' &:= \left\|\int_C \left(\nabla_{\left(\Phi_{(0,t)}^{\mu^0}\circ \mathcal{G}^\eps\right)_{\#}\mu^0}A{\scriptstyle \left(t, \left(\Phi_{(0,t)}^{\mu^0}\circ \mathcal{G}^\eps\right)_{\#}\mu^0, \Phi_{(0,t)}^{\mu_\eps^0}(c), u\left(t, \Phi_{(0,t)}^{\mu_\eps^0}(c)\right)\right)}\left(\Phi_{(0,t)}^{\mu^0}(\mathcal{G}^\eps(\tilde c))\right)\right.\right.  \\
& \,\,\,\,\,\left.\left.-\nabla_{\left(\Phi_{(0,t)}^{\mu^0}\right)_{\#}\mu^0}A{\scriptstyle\left(t,\left(\Phi_{(0,t)}^{\mu^0}\right)_{\#}\mu^0,\Phi_{(0,t)}^{\mu^0}(c),u\left(t,\Phi_{(0,t)}^{\mu^0}(c)\right)\right)}\left(\Phi_{(0,t)}^{\mu^0}(\tilde c)\right)\right)\left[v_\eps(t,\mathcal{G}^\eps(\tilde c))\right] \di\mu^0(\tilde c)\right\|_E, \nonumber
\end{align*}
and
\begin{align*}
&\mbox{III}'' :=
\\
&
\left\|\int_C\nabla_{\left(\Phi_{(0,t)}^{\mu^0}\right)_{\#}\mu^0}A{\scriptstyle\left(t,\left(\Phi_{(0,t)}^{\mu^0}\right)_{\#}\mu^0,\Phi_{(0,t)}^{\mu^0}(c),u\left(t,\Phi_{(0,t)}^{\mu^0}(c)\right)\right)}\left(\Phi_{(0,t)}^{\mu^0}(\tilde c)\right)\left[v_\eps(t,\mathcal{G}^\eps(\tilde c))-v_\eps(t,\tilde c)\right] \di\mu^0(\tilde c) \right\|_E. \nonumber
\end{align*}

First we focus on III$'$. Using \eqref{Wapp2} and applying H\"older inequality, we obtain for a positive constant $M$
\begin{align}
\label{Wapp15}
\mbox{III}' &\leq M\left(\int_C \left\| \nabla_{\left(\Phi_{(0,t)}^{\mu^0}\circ \mathcal{G}^\eps\right)_{\#}\mu^0}A{\scriptstyle \left(t, \left(\Phi_{(0,t)}^{\mu^0}\circ \mathcal{G}^\eps\right)_{\#}\mu^0, \Phi_{(0,t)}^{\mu_\eps^0}(c), u\left(t, \Phi_{(0,t)}^{\mu_\eps^0}(c)\right)\right)}\left(\Phi_{(0,t)}^{\mu^0}(\mathcal{G}^\eps(\tilde c))\right) \right.\right. \nonumber \\
& \left.\left.-\nabla_{\left(\Phi_{(0,t)}^{\mu^0}\right)_{\#}\mu^0}A{\scriptstyle\left(t,\left(\Phi_{(0,t)}^{\mu^0}\right)_{\#}\mu^0,\Phi_{(0,t)}^{\mu^0}(c),u\left(t,\Phi_{(0,t)}^{\mu^0}(c)\right)\right)}\left(\Phi_{(0,t)}^{\mu^0}(\tilde c)\right)\right\|_{\li(E_C;E_C)}^2 \!\!\!\!\!\!\! \di\mu^0(\tilde c)   \right)^{\frac12}.
\end{align}
By the continuity assumption {\bf (HA2)}-$(vii)$ and recalling that in Step 1 we have seen that $W_1\left(\left(\Phi_{(0,t)}^{\mu^0}\circ \mathcal{G}^\eps\right)_{\#}\mu^0, \left(\Phi_{(0,t)}^{\mu^0}\right)_{\#}\mu^0 \right)\to 0$ as $\eps \to 0$, we deduce
\begin{eqnarray*}
&& \lim_{\eps\to 0} \left\{\nabla_{\left(\Phi_{(0,t)}^{\mu^0}\circ \mathcal{G}^\eps\right)_{\#}\mu^0}A{\scriptstyle \left(t, \left(\Phi_{(0,t)}^{\mu^0}\circ \mathcal{G}^\eps\right)_{\#}\mu^0,\Phi_{(0,t)}^{\mu_\eps^0}(c),u\left(t,\Phi_{(0,t)}^{\mu_\eps^0}(c)\right)\right)}\left(\Phi_{(0,t)}^{\mu^0}(\mathcal{G}^\eps(\tilde c))\right)\right\} \\
&&\stackrel{\li(E_C;E_C)}=\nabla_{\left(\Phi_{(0,t)}^{\mu^0}\right)_{\#}\mu^0}A{\scriptstyle\left(t,\left(\Phi_{(0,t)}^{\mu^0}\right)_{\#}\mu^0,\Phi_{(0,t)}^{\mu^0}(c),u\left(t,\Phi_{(0,t)}^{\mu^0}(c)\right)\right)}\left(\Phi_{(0,t)}^{\mu^0}(\tilde c)\right)
\end{eqnarray*}
uniformly for $\tilde c\in \supp(\mu^0)$ (recall that $\supp(\mu^0)$ is compact in $C$). Moreover, by Lemma~\ref{grad-mu-bnd}, it follows that
\begin{eqnarray*}
&&\left\| \nabla_{\left(\Phi_{(0,t)}^{\mu^0}\circ \mathcal{G}^\eps\right)_{\#}\mu^0}A{\scriptstyle \left(t, \left(\Phi_{(0,t)}^{\mu^0}\circ \mathcal{G}^\eps\right)_{\#}\mu^0, \Phi_{(0,t)}^{\mu_\eps^0}(c), u\left(t, \Phi_{(0,t)}^{\mu_\eps^0}(c)\right)\right)}\left(\Phi_{(0,t)}^{\mu^0}(\mathcal{G}^\eps(\tilde c))\right) \right. \nonumber \\
&&\,\,\,\left.-\nabla_{\left(\Phi_{(0,t)}^{\mu^0}\right)_{\#}\mu^0}A{\scriptstyle\left(t,\left(\Phi_{(0,t)}^{\mu^0}\right)_{\#}\mu^0,\Phi_{(0,t)}^{\mu^0}(c),u\left(t,\Phi_{(0,t)}^{\mu^0}(c)\right)\right)}\left(\Phi_{(0,t)}^{\mu^0}(\tilde c)\right) \right\|_{\li(E_C;E_C)} \leq 2L.
\end{eqnarray*}
Thus, after an application of the Lebesgue theorem for general measures to the right-hand side of \eqref{Wapp15}, we infer that
\begin{equation}
\label{Wapp16}
\lim_{\eps\to 0} \mbox{III}'=0.
\end{equation}

As for III$''$, using again Lemma \ref{grad-mu-bnd}, we infer
\begin{equation}
\label{Wapp18}
\mbox{III}''\leq M \int_C \|v_\eps(t,\mathcal{G}^\eps(\tilde c))-v_\eps(t,\tilde c)\|_E \di\mu^0(\tilde c),
\end{equation}
for a positive constant $M$. Moreover, by assumption $(i)$-$(ii)$ and thanks to the property of uniform $C$-differentiability of $\Phi_{(0,t)}^{\mu^0}$ and $\Phi_{(0,t)}^{\mu^0_\eps}$ on $\supp(\mu^0)$ given by Lemma \ref{Dphi-bnd}, we have as $\eps \to 0$
\begin{eqnarray*}
v_\eps(t,\mathcal{G}^\eps(\tilde c))-v_\eps(t,\tilde c) &\stackrel{\eqref{Wapp3}}=& \frac{\Phi_{(0,t)}^{\mu_\eps^0}(\mathcal{G}^\eps(\tilde c)) - \Phi_{(0,t)}^{\mu^0}(\mathcal{G}^\eps(\tilde c))}{\eps}-\frac{\Phi_{(0,t)}^{\mu_\eps^0}(\tilde c) - \Phi_{(0,t)}^{\mu^0}(\tilde c)}{\eps} \\
&= &\frac{1}{\eps}\left\{\Phi_{(0,t)}^{\mu_\eps^0}(\mathcal{G}^\eps(\tilde c)) - \Phi_{(0,t)}^{\mu_\eps^0}(\tilde c)-\Phi_{(0,t)}^{\mu^0}(\mathcal{G}^\eps(\tilde c)) + \Phi_{(0,t)}^{\mu^0}(\tilde c)\right\} \\
& = & \frac{1}{\eps}\left\{\eps\Di_{\tilde c}\Phi_{(0,t)}^{\mu_\eps^0}\left[\mathcal{F}(\tilde c)\right] - \eps\Di_{\tilde c}\Phi_{(0,t)}^{\mu^0}\left[\mathcal{F}(\tilde c)\right] + o(\eps) \right\} \\
& = & \left(\Di_{\tilde c}\Phi_{(0,t)}^{\mu_\eps^0}-\Di_{\tilde c}\Phi_{(0,t)}^{\mu^0}\right)\left[\mathcal{F}(\tilde c)\right] + \frac{o(\eps)}{\eps} \stackrel{E}\longrightarrow 0,
\end{eqnarray*}
where in the limit we have applied Lemma \ref{conv-flussi-mis}. Thanks to \eqref{Wapp2} and applying the Lebesgue theorem to the right-hand side of \eqref{Wapp18}, we obtain
\begin{equation}
\label{Wapp19}
\lim_{\eps\to 0} \mbox{III}''=0.
\end{equation}

Finally, using again \eqref{Wapp2}, we infer that
$$
\lim_{\eps \to 0} \left\{\frac{1}{\eps}o\left(|\eps|\left(\int_C \|v_\eps(t,\mathcal{G}^\eps(\tilde c))\|_E^2 \di \mu^0(\tilde c)\right)^{\frac12}\right)\right\} = 0.
$$
Hence, combining the last limit with \eqref{Wapp12}, \eqref{Wapp16} and \eqref{Wapp19}, we conclude that
$$
\lim_{\eps\to 0} \mbox{III} = 0.
$$
This, together with \eqref{Wapp6} and \eqref{Wapp20bis}, implies that
\begin{equation}
\label{Wapp20}
\lim_{\eps\to 0} r_\eps(t,c) = 0 \qquad \forall (t,c)\in [0,T]\times V.
\end{equation}

{\bf Step 4.} In this last step we prove the result. \\
Using {\bf(HA1)}-$(i)$, the fact that $W_1(\mu_\eps(t),\mu(t))\leq M|\eps|$ (as seen in Step 1) and \eqref{Wapp3} in the first term of the right-hand side of \eqref{Wapp5}, inequalities \eqref{Das-bnd} and \eqref{Wapp2} in the second term and Lemma \ref{grad-mu-bnd}, Lemma \ref{Dphi-bnd}, assumption $(ii)$ and again \eqref{Wapp2} in the third term  respectively, we deduce for a positive constant $M$ that
$$
\|r_\eps(t,c)\|_E\leq M \qquad \forall (t,c)\in [0,T]\times V.
$$
Therefore, by \eqref{Wapp20} and applying the Lebesgue theorem, we infer that
\begin{equation}
\label{Wapp21}
r_\eps(t,c)\to 0 \qquad \mbox{in }L^1_{\mathcal{L}\times \mu^0}([0,T]\times C;E) \qquad\mbox{as }\eps\to 0.
\end{equation}
Finally, we estimate \eqref{Wapp4}. By \eqref{Das-bnd} and Lemma \ref{grad-mu-bnd}, we deduce
\begin{eqnarray}
\label{Wapp24}
\|v_\eps(t,c) - v(t,c)\|_E  &\leq&  L\int_0^t \|v_\eps(s,c)-v(s,c)\|_E \di s \nonumber \\
&&+L \int_0^t \int_C \|v_\eps(s,\tilde c) - v(s,\tilde c)\|_E \di\mu^0(\tilde c) \di s  \nonumber\\
&&+ \int_0^t \|r_\eps(s,c)\|_E \di s.
\end{eqnarray}
Integrating over $C$, recalling that $\mu^0(C)=1$ and applying Fubini theorem, we have
\begin{eqnarray*}
\int_C\|v_\eps(t,c) - v(t,c)\|_E \di \mu^0(c) &\leq&  2L\int_0^t \int_C \|v_\eps(s,c)-v(s,c)\|_E \di \mu^0(c)\di s \nonumber \\
&&+ \int_0^t \int_C\|r_\eps(s,c)\|_E \di\mu^0(c) \di s,
\end{eqnarray*}
which implies, applying Gr\"onwall inequality and using \eqref{Wapp21}, that
$$
\int_C\|v_\eps(t,c) - v(t,c)\|_E \di \mu^0(c) \leq \delta_\eps \ex^{2Lt}, \qquad \delta_\eps:=\int_0^T \int_C\|r_\eps(s,c)\|_E \di\mu^0(c) \di s \rightarrow  0 \text{ as }\eps \to 0.
$$
Inserting this last inequality in \eqref{Wapp24}, we obtain for every $(t,c)\in [0,T]\times V$
$$
\|v_\eps(t,c) - v(t,c)\|_E \leq  L\int_0^t \|v_\eps(s,c)-v(s,c)\|_E \di s + \delta_\eps \ex^{2LT} + \int_0^t \|r_\eps(s,c)\|_E \di s.
$$
Hence, setting
$$
\hat\delta_\eps:= \delta_\eps \ex^{2LT} + \int_0^T \|r_\eps(s,c)\|_E \di s,
$$
recalling that, by \eqref{Wapp21}, $\hat\delta_\eps \to 0$ uniformly in $V$ as $\eps \to 0$, and applying again Gr\"onwall inequality, we infer
$$
\|v_\eps(t,c) - v(t,c)\|_E \leq \hat\delta_\eps \ex^{Lt},
$$
which gives \eqref{goalin}.
\end{proof}

\begin{proof}[Proof of Theorem \ref{mainres}]
We divide the proof into 4 steps. \\

{\bf Step 1: Needle variations.} Fix any time $\tau\in (0,T]$ and any admissible control value $\omega\in U$. Up to a null set we can assume that $\tau$ is a Lebesgue point of $t \mapsto A(t,\muo(t),c,z)$ for all $z\in Z$. Notice that this is possible by separability of $Z$ and the uniform Lipschitz continuity of $A$ in the last variable. For $\varepsilon\in [0,\bar{\varepsilon})$ with $\bar{\varepsilon}>0$ sufficiently small, we consider the following needle variation:
\begin{equation}
\label{Wapp32}
u_\eps(t)=\begin{cases} \omega & \text{ if }t\in[\tau-\eps,\tau], \\
\uo(t) & \text{ otherwise}.
\end{cases}
\end{equation}
Thanks to {\bf (HA1)}, by Theorem 3.3 of \cite{AFMS}, there exists a unique solution $\mu_\eps\in AC([0,T]; \PC)$ of
$$
\begin{cases}
\dd \mu(t) +\mathrm{div}\left(A(t,\mu(t),\cdot,u_\eps(t,\cdot))\mu(t)\right)=0 & \text{ in }(0,T], \\
\mu(0)=\mu^0\in \mathcal{P}_c(C).
\end{cases}
$$
The curve $\mu_\eps(t)$ can be represented using the associated family of non-local flows as
$$
\mu_\eps(t)= \left(\Phi_{(0,t)}^{\varepsilon,\mu^0}\right)_{\#}\mu^0 \qquad \text{for }t\in [0,T].
$$
Moreover, by Proposition 4 of \cite{AAMS}, $R>0$ exists such that $\supp(\muo(t)) \cup \supp(\mu_\eps(t)) \subset B_{R,C}$ and $\|\Phi^{\varepsilon, \mu^{0}}_{(0, t)}\|_{C^0_b(C;C)} \leq R$ for every $(\varepsilon,t)\in [0,\bar{\varepsilon})\times [0,T]$.

Let $t\in(\tau,T]$. We want to compute for every $(t,c)\in (\tau, T]\times B_{R,C}$
\begin{equation}
\label{Wapp31}
\lim_{\varepsilon\to 0^+} \frac{\Phi_{(0,t)}^{\varepsilon,\mu^0} \circ   \Phi_{(t,0)}^{\muo(t)}(c) - \text{Id}(c)}{\varepsilon},
\end{equation}
where $\text{Id}\colon C \to C$ is the identity function. It follows from \eqref{Wapp32} and from the definition of $\Phi_{(s,t)}^{\varepsilon,\mu_\eps(s)}$ and $\Phi_{(s,t)}^{\muo(s)}$ ($0\leq s<t\leq T$) that
\begin{equation}
\label{Wapp33}
\Phi_{(0,t)}^{\varepsilon,\mu^0} \circ   \Phi_{(t,0)}^{\muo (t)}(c)  = \Phi_{(\tau,t)}^{\mu_\eps(\tau)}\circ \Phi_{(\tau-\eps,\tau)}^{\varepsilon,\muo(\tau-\eps)}\circ\Phi_{(\tau,\tau-\eps)}^{\muo(\tau)}(c_\tau)
\end{equation}
where $c_\tau:=\Phi_{(t,\tau)}^{\muo(t)}(c)$. Using again {\bf (HA1)}, we deduce that the map
$$
\sigma\mapsto A{\scriptstyle\left(\sigma,\muo(\sigma), \Phi_{(\sigma,\tau-\eps)}^{\muo(\tau)}(c_\tau),\uo\left(\sigma,\Phi_{(\sigma,\tau-\eps)}^{\muo(\tau)}(c_\tau) \right)\right)} \in L^\infty([0,T];E).
$$
Then, after an application of the Lebesgue differentiation theorem for vector-valued functions, we obtain
\begin{eqnarray*}
\Phi_{(\tau,\tau-\eps)}^{\muo(\tau)}(c_\tau) &\stackrel{\eqref{flow}}=& c_\tau - \int_{\tau-\eps}^\tau A{\scriptstyle\left(\sigma,\muo(\sigma),\Phi_{(\sigma,\tau-\eps)}^{\muo(\tau)}(c_\tau),\uo\left(\sigma,\Phi_{(\sigma,\tau-\eps)}^{\muo(\tau)}(c_\tau) \right)\right)} \di\sigma  \nonumber \\
&=& c_\tau -\eps A\left(\tau,\muo(\tau),c_\tau,\uo\left(\tau,c_\tau\right)\right) + \delta_\eps^1,
\end{eqnarray*}
with $\frac{\delta_\eps^1}{\eps}\to 0$ in $E$ as $\eps \to 0^+$.  Now, choose $d=c_\tau -\eps A\left(\tau,\muo(\tau),c_\tau,\uo\left(\tau,c_\tau\right)\right) + \delta_\eps^1$. Observe that
\[
\frac1{\eps}\int_{\tau-\eps}^\tau\left[W_1(\mu_\eps(\sigma), \muo(\tau))+\left\|\Phi_{(\tau-\eps,\sigma)}^{\eps,\muo(\tau-\eps)}(d)-c_\tau\right\|_E+ \left\|\omega\left(\Phi_{(\tau-\eps,\sigma)}^{\eps,\muo(\tau-\eps)}(d)\right)-\omega(c_\tau)\right\|_Z \right]\di\sigma \to 0\,.
\]
Using {\bf (HA1)}-$(i)$ we deduce that
\[
\frac1{\eps}\int_{\tau-\eps}^\tau A{\scriptstyle\left(\sigma,\mu_\eps(\sigma),\Phi_{(\tau-\eps,\sigma)}^{\eps,\muo(\tau-\eps)}(d), \omega\left(\Phi_{(\tau-\eps,\sigma)}^{\eps,\muo(\tau-\eps)}(d)\right)\right)}- A{\scriptstyle\left(\sigma,\mu_\eps(\sigma),\Phi_{(\tau-\eps,\sigma)}^{\eps,\muo(\tau-\eps)}(d),\omega\left(c_\tau\right)\right)}  \di\sigma \to 0\,.
\]
With the above equalities and the Lebesgue differentiation theorem we get
\begin{eqnarray*}
\Phi_{(\tau-\eps,\tau)}^{\varepsilon,\muo(\tau-\eps)}(d) &\stackrel{\eqref{flow}}=& d + \int_{\tau-\eps}^\tau A{\scriptstyle\left(\sigma,\mu_\eps(\sigma),\Phi_{(\tau-\eps,\sigma)}^{\eps,\muo(\tau-\eps)}(d),\omega\left(\Phi_{(\tau-\eps,\sigma)}^{\eps,\muo(\tau-\eps)}(d)\right)\right)} \di\sigma  \nonumber \\
&=& c_\tau -\eps A\left(\tau,\muo(\tau),c_\tau,\uo\left(\tau,c_\tau\right)\right) +\eps A\left(\tau,\muo(\tau),c_\tau,\omega(c_\tau)\right) + \delta_\eps^2,
\end{eqnarray*}
with $\frac{\delta_\eps^2}{\eps}\to 0$ in $E$ as $\eps \to 0^+$. Recalling the definition of $d$, this amounts to
\begin{equation}
\label{Wapp34}
\Phi_{(\tau-\eps,\tau)}^{\varepsilon,\muo(\tau-\eps)}\circ\Phi_{(\tau,\tau-\eps)}^{\muo( \tau)}(c_\tau) = c_\tau +\eps\left( A\left(\tau,\muo(\tau),c_\tau,\omega(c_\tau)\right) - A\left(\tau,\muo(\tau),c_\tau,\uo\left(\tau,c_\tau\right)\right)\right) + \delta_\eps,
\end{equation}
with $\frac{\delta_\eps}{\eps}\to 0$ in $E$ as $\eps \to 0^+$. Recalling that $\muo(t)$ and $\mu_\eps(t)$ can be represented using the associated non-local flows, we have the following expression which links $\mu_\eps(\tau)$ and $\muo(\tau)$:
\begin{eqnarray}
\label{Wapp36}
\mu_\eps(\tau) &=& \left(\Phi_{(\tau-\eps,\tau)}^{\varepsilon,\muo(\tau-\eps )}\right)_{\#}\muo(\tau-\eps) = \left(\Phi_{(\tau-\eps,\tau)}^{\varepsilon,\muo(\tau-\eps)} \circ \Phi_{(\tau,\tau-\eps)}^{\muo(\tau)}\right)_{\#}\muo(\tau) \nonumber \\
&\stackrel{\eqref{Wapp34}}=& \left( \text{Id} +\eps\left(A\left(\tau,\muo(\tau),\cdot,\omega(\cdot)\right) - A\left(\tau,\muo(\tau),\cdot,\uo\left(\tau,\cdot\right)\right)\right) + \delta_\eps  \right)_{\#}\muo(\tau).
\end{eqnarray}
We then define for fixed $c\in B_{R,C}$ the function $\mathcal{G}\colon  [0,\bar\eps) \to E$ as
\begin{equation}
\label{Wapp37tris}
\mathcal{G}(\eps,\tau,c):= \text{Id} +\eps\left( A\left(\tau,\muo(\tau),c,\omega(c)\right) - A\left(\tau,\muo(\tau),c,\uo\left(\tau,c\right)\right)\right) + \delta_\eps,
\end{equation}
and, since $\mathcal{G}$ is Fr\'echet-differentiable from the right with differential
\begin{equation}
\label{Wapp37bis}
\mathcal{F}(\tau,c):= A\left(\tau,\muo(\tau),c,\omega(c)\right) - A\left(\tau,\muo(\tau),c,\uo\left(\tau,c\right)\right),
\end{equation}
applying Lemma 2.11 of \cite{Shva} we can extend $\mathcal{G}$ from $[0,\bar\eps)$ to $(-\bar\eps,\bar\eps)$ preserving the Fr\'echet-differentiability at $\eps=0$. Consequently, by {\bf (HA1)}-$(ii)$, we have for $\eps\in(-\bar\eps,\bar\eps)$
\begin{equation}
\label{Wapp37}
\begin{cases}
\mathcal{G}(0,\tau,\cdot)=\text{Id},\\
\frac{\di}{\di\eps}\mathcal{G}(\eps,\tau,c)\big|_{\eps=0}=\mathcal{F}(\tau,c)\in L^{\infty}_{\muo(\tau)}(B_{R,C};E), \\
\mu_\eps(\tau)\stackrel{\eqref{Wapp36}}= \mathcal{G}(\eps,\tau,\cdot)_{\#}\muo(\tau).
\end{cases}
\end{equation}
Therefore, combining \eqref{Wapp31}, \eqref{Wapp33}, \eqref{Wapp34} and \eqref{Wapp37tris}, we infer that
\begin{equation}
\label{Wapp38}
\frac{\Phi_{(0,t)}^{\varepsilon,\mu^0} \circ  \Phi_{(t,0)}^{\muo(t)}(c)  - \text{Id}(c)}{\varepsilon} = \frac{\Phi_{(\tau,t)}^{\mu_\eps(\tau)}\left(\mathcal{G}(\eps,\tau,c_\tau)\right) - \Phi_{(\tau,t)}^{\muo(\tau)}(c_\tau)}{\eps}
\end{equation}
for every $(\eps,t,c)\in (-\bar\eps,\bar\eps)\times (\tau,T]\times B_{R,C}$. Thanks to \eqref{Wapp37} we can apply Proposition \ref{linearized} and Lemma \ref{Dphi-bnd} obtaining
\begin{eqnarray}
\label{Wapp39}
\!\!\!\!\!\!\Phi_{(\tau,t)}^{\mu_\eps(\tau)}\left(\mathcal{G}(\eps,\tau,c_\tau)\right) &=& \Phi_{(\tau,t)}^{\muo(\tau)}\left(\mathcal{G}(\eps,\tau,c_\tau)\right) + \eps v(t,\mathcal{G}(\eps,\tau,c_\tau)) + \hat{\delta}_\eps \nonumber \\
&=& \Phi_{(\tau,t)}^{\muo(\tau)}(c_\tau) + \eps \left(\Di_{c_\tau}\Phi_{(\tau,t)}^{\muo(\tau)}[\mathcal{F}(\tau,c_\tau)] + v(t,\mathcal{G}(\eps,\tau,c_\tau))\right) + \tilde{\delta}_\eps,
\end{eqnarray}
where $v(t,\cdot)$ is the Fr\'echet derivative at $\eps=0$ of $\Phi_{(\tau,t)}^{\mu_\eps(\tau)}(\cdot)$ and $\frac{\hat{\delta}_\eps}{\eps},\frac{\tilde{\delta}_\eps}{\eps}\to 0$ as $\eps\to 0^+$. Hence, by the continuity of $\mathcal{G}$ with respect to $\eps$ and $v$ with respect to $c$ given by Lemma~\ref{ODE-continua}, we conclude that for every $(t,c)\in (\tau,T] \times B_{R,C}$
\begin{equation*}
\lim_{\varepsilon\to 0^+} \frac{\Phi_{(0,t)}^{\varepsilon,\mu^0} \circ  \Phi_{(t,0)}^{\muo( t) }(c)  - \text{Id}(c)}{\varepsilon} \stackrel{\eqref{Wapp38},\eqref{Wapp39}} = \Di_{c_\tau}\Phi_{(\tau,t)}^{\muo(\tau)}[\mathcal{F}(\tau,c_\tau)] + v(t,c_\tau).
\end{equation*}
We define $\mathcal{F}\colon (\tau,T]\times B_{R,C} \to E_C$ as
\begin{eqnarray}
\label{Wapp41}
\mathcal{F}(t,c) &:=& \Di_{c}\Phi_{(\tau,t)}^{\muo(\tau)}[\mathcal{F}(\tau,c)] + v(t,c) \nonumber \\
&\stackrel{\eqref{Wapp37bis}}=& \Di_{c}\Phi_{(\tau,t)}^{\muo(\tau)}[A\left(\tau,\muo(\tau),c,\omega(c)\right) - A\left(\tau,\muo(\tau),c,\uo\left(\tau,c\right)\right)] + v(t,c),
\end{eqnarray}
thus obtaining
\begin{equation}
\label{Wapp40}
\lim_{\varepsilon\to 0^+} \frac{\Phi_{(0,t)}^{\varepsilon,\mu^0} \circ  \Phi_{(t,0)}^{\muo(t)}(c)  - \text{Id}(c)}{\varepsilon}
\stackrel{\eqref{Wapp41}} = \mathcal{F}(t,  \Phi_{(t,\tau)}^{\muo(t)}(c)  ).
\end{equation}
Finally, again applying Lemma \ref{Dphi-bnd} and Proposition \ref{linearized}, we characterize $\mathcal{F}(t,c)$ as the unique solution of the following linear ODE defined in $E_C$ for $t\in (\tau,T]$:
\begin{equation}
\label{Wapp42}
\begin{cases}
\dd \mathcal{F}(t,c) = \Di_{\Phi_{(\tau,t)}^{\muo(\tau)}(c)} A{\scriptstyle \left(t,\muo(t),\Phi_{(\tau,t)}^{\muo(\tau)}(c),\uo\left(t,\Phi_{(\tau,t)}^{\muo(\tau)}(c)\right)\right)}[\mathcal{F}(t,c)] \\
\,\,\,\,\,\,\,\,+\displaystyle \int_C \nabla_{\muo(t)}A{\scriptstyle \left(t,\muo(t),\Phi_{(\tau,t)}^{\muo(\tau)}(c),\uo\left(t,\Phi_{(\tau,t)}^{\muo(\tau)}(c)\right)\right)}\left(\Phi_{(\tau,t)}^{\muo(\tau)}(\tilde c)\right)\left[\mathcal{F}(t,\tilde c) \right] \di\muo(\tau)(\tilde c),  \\
\mathcal{F}(\tau,c)\stackrel{\eqref{Wapp37bis}}= A\left(\tau,\muo(\tau),c,\omega(c)\right) - A\left(\tau,\muo(\tau),c,\uo\left(\tau,c\right)\right).
\end{cases}
\end{equation}
\\

{\bf Step 2: Optimality condition.} Note that, by the definitions of $\muo(t)$ and $\mu_\eps(t)$ and using \eqref{Wapp40}, we deduce
\begin{equation}
\label{Wapp43}
\mu_\eps(t) = \left(\Phi_{(0,t)}^{\varepsilon,\mu^0} \circ  \Phi_{(t,0)}^{\muo(t) }  \right)_{\#}\muo(t) = \left(\text{Id}+\eps\mathcal{F}(t,  \Phi_{(t,\tau)}^{\muo( t) }  (\cdot))+\delta_\eps\right)_{\#}\muo(t),
\end{equation}
for every $t\in (\tau,T]$, where $\frac{\delta_\eps}{\eps}\to 0$ as $\eps \to 0^+$. Thus, it follows from the assumption {\bf (H$\varphi$)} and using \eqref{Wapp43} with $t=T$ that we can apply Proposition \ref{Wchainrule} obtaining
\begin{equation}
\label{Wapp44}
\varphi(\mu_\eps(T)) = \varphi(\muo(T)) + \eps \int_C \left\langle \nabla_{\muo(T)}\varphi(\muo(T))(c), \mathcal{F}(T,  \Phi_{(T,\tau)}^{\muo(T)}(c)) \right\rangle \di\muo(T) (c) + \delta_\eps^1,
\end{equation}
where $\frac{\delta_\eps^1}{\eps}\to 0$ as $\eps \to 0^+$. By the optimality condition on $\uo\in \mathcal{U}$, we know that
$$
\frac{\varphi(\mu_\eps(T)) - \varphi(\muo(T))}{\eps} \geq 0 \qquad \text{for every }\eps>0,
$$
hence, using \eqref{Wapp44} and letting $\eps$ go to $0$, we infer
\begin{equation}
\label{Wapp45}
\int_C \left\langle \nabla_{\muo(T)}\varphi(\muo(T))(c), \mathcal{F}(T,  \Phi_{(T,\tau)}^{\muo( T)}  (c)) \right\rangle \di\muo(T) (c) \geq 0.
\end{equation}
\\

{\bf Step 3: Adjoint problem.} In this Step we follow Step 3 of Section 3.2 in \cite{BonRos}. \\
We consider the unique distributional solution $\psi_t(c,p)\colon  [0,T]\times C \times E_C^* \to E_C^*$ of the adjoint equation of \eqref{Wapp42} (which is a linear ODE defined on $E_C^*$):
\begin{equation}
\label{Wapp46}
\begin{cases}
\dd w(t,c,p) = -\Di_{\Phi_{(T,t)}^{\muo(T)}(c)}^* A {\scriptstyle \left(t,\muo(t),\Phi_{(T,t)}^{\muo(T)}(c),\uo\left(t,\Phi_{(T,t)}^{\muo(T)}(c)\right)\right)}[w(t,c,p)] \\
\,\,\,-\displaystyle \int_{C\times E_C^*} \nabla_{\muo(t)}^*A{\scriptstyle \left(t,\muo(t),\Phi_{(T,t)}^{\muo(T)}(\tilde c),\uo\left(t,\Phi_{(T,t)}^{\muo(T)}(\tilde c)\right)\right)}\left(\Phi_{(T,t)}^{\muo(T)}(c)\right)\left[w(t,\tilde c,\tilde p) \right] \di\left(\scriptstyle{\left(\text{Id}, -\nabla_{\muo(T)}\varphi(\muo(T))\right)_{\#}\muo(T)}\right)(\tilde c,\tilde p),  \\
w(T,c,p) = p\in E_C^*.
\end{cases}
\end{equation}
Note that, since $D_cA(t,\mu,c,u)$ and $\nabla_{\mu}A(t,\mu,\tilde c,u)(c)$ are uniformly bounded operators in $\li(E_C;E_C)$, then their adjoint operators are uniformly bounded in $\li(E_C^*;E_C^*)$. Hence, applying Lemma \ref{ODE-continua}, there exists a unique distributional solution $\psi_t(c,p)\in AC([0,T];E_C^*)$ of \eqref{Wapp46}. Moreover, $\psi_t(c,p)$ is continuous with respect to $c$ and $p$. Thus, we define a curve of measures in $E_C^*$ as
\begin{equation}
\label{Wapp47}
\sigma_c : t\mapsto \psi_t(c,\cdot)_{\#}\delta_{-\nabla_{\muo(T)}\varphi(\muo(T))(c)}.
\end{equation}
Since $\delta_{-\nabla_{\muo(T)}\varphi(\muo(T))(c)}\in \mathcal{P}_c(E_C^*)$, we have $\sigma_c \in AC([0,T];\mathcal{P}_c(E_C^*))$. Now we extend this curve of measures on $C\times E_C^*$. First we define
\begin{equation}
\label{Wapp48}
\nu_T: t\mapsto \int_C \sigma_c(t) \di \muo(T)(c),
\end{equation}
which belongs to $AC([0,T];\mathcal{P}_c(C\times E_C^*))$ since $\mu(T)$ and $\sigma_c(t)$ have compact support in~$C$ and $E_C^*$ respectively. Then we set
\begin{equation}
\label{Wapp49}
\nuo: t\mapsto \left(\Phi_{(T,t)}^{\muo(T)}\circ\pi^1 , \pi^2\right)_{\#}\nu_T(t),
\end{equation}
thus obtaining $\nuo \in AC([0,T];\mathcal{P}_c(C\times E_C^*))$. By its definition we deduce that
\begin{eqnarray}
\label{Wapp53}
\di\nuo(T)(c,p) &\stackrel{\eqref{Wapp49}}= &\di\nu_T(T)(c,p) \stackrel{\eqref{Wapp48}}= \di \sigma_c(T)(p)\di\muo(T)(c) \nonumber \\ &\stackrel{\eqref{Wapp47},\eqref{Wapp46}}= & \di \delta_{-\nabla_{\muo(T)}\varphi(\muo(T))(c)}(p)\di\muo(T)(c) \nonumber \\
& = & \di\left(\left(Id , -\nabla_{\muo(T)}\varphi(\muo(T))\right)_{\#}\muo(T)\right)(c,p).
\end{eqnarray}
Moreover, we have
\begin{eqnarray}
\label{Wapp53bis}
\pi^1_{\#}\nuo(t) \stackrel{\eqref{Wapp49},\eqref{Wapp48}}= (\Phi_{(T,t)}^{\muo(T)})_{\#}\muo(T)=\muo(t) \quad \text{for every }t\in [0,T].
\end{eqnarray}
Now we prove that $\nuo$ is a distributional solution of
\begin{equation}
\label{Wapp50}
\begin{cases}
\dis\dd \nu(t) = - \mathrm{div}_{(c,p)}\left(\Gamma\left(t,\nu(t),c,p,\uo(t,c)\right)\nu(t)\right)   & \text{ in } [0,T), \\
\pi^1_{\#}\nu(t)=\muo(t), \\
\nu(T)= \left(Id , -\nabla_{\muo(T)}\varphi(\muo(T))\right)_{\#}\muo(T),
\end{cases}
\end{equation}
where $\Gamma = (\Gamma_{1}, \Gamma_{2}) \colon  [0,T]\times \mathcal{P}_b(C\times E_C^*) \times C \times E_C^* \times U \to E_C \times E_C^*$ is defined componentwise by
\begin{eqnarray}
\label{Wapp51}
&&\Gamma_1(t,\nu,c,p,u) := A(t,\pi_{\#}^1\nu,c,u) \\
&& \Gamma_2(t,\nu,c,p,u): =-D_c^{*}A(t,\pi_{\#}^1\nu,c,u)[p] - \displaystyle\int_{C\times E_C^*}\nabla_{\pi_{\#}^1\nu}^*A(t,\pi_{\#}^1\nu,\tilde c,u)(c)[\tilde p] \di\nu(\tilde c,\tilde p). \nonumber
\end{eqnarray}

Let $\xi\in C_c^{\infty}(E \times E_C^*;\R)$ with differential $(\Di_E\xi,\Di_{E_C^*}\xi)$. Then
\begin{eqnarray*}
\label{Wapp52}
&& \dd\left[ \int_{C\times E_C^*}\xi(c,p)\di \nuo(t)(c,p) \right] \stackrel{\eqref{Wapp49}}=  \dd\left[ \int_{C\times E_C^*}\xi(\Phi_{(T,t)}^{\muo^T}(c),p)\di \nu_T(t)(c,p) \right]  \\
&& \stackrel{\eqref{Wapp48}} = \dd\left[\int_{C\times E_C^*} \xi(\Phi_{(T,t)}^{\muo^T}(c),p)\di\sigma_c(t)(p)\di\muo(T)(c)\right] \nonumber \\
&& \stackrel{\eqref{Wapp47}}=\dd\left[\int_{C\times E_C^*} \xi(\Phi_{(T,t)}^{\muo^T}(c),\psi_t(c,p)) \di\delta_{-\nabla_{\muo(T)}\varphi(\muo(T))(c)}(p)    \di\muo(T)(c)\right] \nonumber \\
&& \stackrel{\eqref{Wapp53}}=\dd\left[\int_{C\times E_C^*} \xi(\Phi_{(T,t)}^{\muo^T}(c),\psi_t(c,p)) \di\nuo(T)(c,p) \right] \nonumber \\
&& \stackrel{\eqref{flow},\eqref{Wapp46}}= \int_{C\times E_C*} \left\langle \Di_E \xi(\Phi_{(T,t)}^{\muo^T}(c),\psi_t(c,p)), A{\scriptstyle \left(t,\muo(t),\Phi_{(T,t)}^{\muo^T}(c),\uo\left(t,\Phi_{(T,t)}^{\muo^T}(c)\right)\right)}\right\rangle \di\nuo(T)(c,p) \nonumber \\
&& -\int_{C\times E_C*} \left\langle \Di_{E_C^*}\xi(\Phi_{(T,t)}^{\muo^T}(c),\psi_t(c,p)), \Di_{\Phi_{(T,t)}^{\muo^T}(c)}^* A {\scriptstyle \left(t,\muo(t),\Phi_{(T,t)}^{\muo^T}(c),\uo\left(t,\Phi_{(T,t)}^{\muo^T}(c)\right)\right)}[\psi_t(c,p)] \right. \nonumber \\
&& \phantom{\int_{C\times E_C*}} \left.+\int_{C\times E_C^*} \nabla_{\muo(t)}^*A{\scriptstyle \left(t,\muo(t),\Phi_{(T,t)}^{\muo^T}(\tilde c),\uo\left(t,\Phi_{(T,t)}^{\muo^T}(\tilde c)\right)\right)}\left(\Phi_{(T,t)}^{\muo^T}(c)\right)\left[\psi_t(\tilde c,\tilde p) \right] \di\nuo(T)(\tilde c,\tilde p) \right\rangle \di\nuo(T)(c,p) \nonumber \\
&& \stackrel{\eqref{Wapp53},\eqref{Wapp47},\eqref{Wapp48},\eqref{Wapp49}}=\int_{C\times E_C*} \left\langle \Di_E \xi(c,p), A{\scriptstyle\left(t,\muo(t),c,\uo(t,c)\right)}\right\rangle \di\nuo(t)(c,p) \nonumber \\
&& -\int_{C\times E_C*} \left\langle \Di_{E_C^*}\xi(c,p), \Di_{c}^* A {\scriptstyle \left(t,\muo(t),c,\uo\left(t,c\right)\right)}[p] +\int_{C\times E_C^*} \nabla_{\muo(t)}^*A{\scriptstyle \left(t,\muo(t),\tilde c,\uo\left(t,\tilde c\right)\right)}\left(c\right)\left[\tilde p\right] \di\nuo(t)(\tilde c,\tilde p) \right\rangle \di\nuo(t)(c,p) \nonumber \\
&& \stackrel{\eqref{Wapp53bis},\eqref{Wapp51}}= \int_{C\times E_C*} \left\langle \Di_E \xi(c,p), \Gamma_1(t,\nuo(t),c,p,\uo(t,c))\right\rangle \di\nuo(t)(c,p) \nonumber \\
&& \phantom{\stackrel{\eqref{Wapp53bis},\eqref{Wapp51}}=} +\int_{C\times E_C^*} \left\langle \Di_{E_C^*}\xi(c,p), \Gamma_2(t,\nuo(t),c,p,\uo(t,c))\right\rangle \di\nuo(t)(c,p). \nonumber
\end{eqnarray*}
It follows, recalling \eqref{Wapp53bis} and \eqref{Wapp53}, that $\nuo$ is a distributional solution of \eqref{Wapp50}. \\
Finally we define $\K\colon [\tau,T]\to \R$ as
\begin{eqnarray}
\label{Wapp54}
\K(t) &:=& \int_{C\times E_C^*} \left\langle p,\mathcal{F}\left(t,\Phi_{(t,\tau)}^{\muo(t)}(c)\right)\right\rangle \di\nuo(t)(c,p) \nonumber \\ &\stackrel{\eqref{Wapp49},\eqref{Wapp48},\eqref{Wapp47},\eqref{Wapp53}}=& \int_{C\times E_C^*} \left\langle \psi_t(c,p),\mathcal{F}\left(t,\Phi_{(T,\tau)}^{\muo(T)}(c)\right)\right\rangle \di\nuo(T)(c,p),
\end{eqnarray}
where $\mathcal{F}$ is defined by \eqref{Wapp41}. Using that $\psi_t(c,p)$ and $\mathcal{F}(t,c)$ are weak solutions of \eqref{Wapp46} and \eqref{Wapp42} respectively, by the same density argument used in the proof of Theorem \ref{teo1} (cf.~the comments after~\eqref{may6}), recalling the definition of adjoint operator and applying Fubini's Theorem, we obtain
\begin{equation}
\label{Wapp55}
\dd \K(t) = 0 \quad \text{a.e }t\in [\tau,T].
\end{equation}

{\bf Step 4: Conclusion of the proof.} Note that, by \eqref{Wapp55} and since
$$
\K(T) \stackrel{\eqref{Wapp54},\eqref{Wapp53}}= \int_C \left\langle -\nabla_{\muo(T)}\varphi(\muo(T))(c), \mathcal{F}(T,  \Phi_{(T,\tau)}^{\muo(T)} (c)) \right\rangle \di\muo(T)(c) \stackrel{\eqref{Wapp45}}\leq 0,
$$
we deduce that $\K(t) \leq 0$ for $t\in [\tau,T]$. In particular, it holds for every $\omega\in U$ and a.e. $\tau\in [0,T]$ that
\begin{eqnarray*}
\label{Wapp56}
&&\K(\tau) \stackrel{\eqref{Wapp54}}= \int_{C\times E_C^*} \langle p,\mathcal{F}(\tau,c) \rangle \di\nuo(\tau)(c,p)   \\
&& \stackrel{\eqref{Wapp37bis}}= \int_{C\times E_C^*} \langle p,A\left(\tau,\muo(\tau),c,\omega(c)\right) - A\left(\tau,\muo(\tau),c,\uo\left(\tau,c\right)\right) \rangle \di\nuo(\tau)(c,p) \leq 0. \nonumber
\end{eqnarray*}
Thus, recalling \eqref{Wham}, we get \eqref{Wmaxcond}. Finally, it follows from \eqref{form-diff-H} and \eqref{Wapp51} that
$$
\Gamma(t,\nu,c,p,u) = J\nabla_\nu \mathcal H(t,\nu,u)(c,p).
$$
Hence, by \eqref{Wapp50}, $\nuo$ is a distributional solution of \eqref{Wsist}.
\end{proof}

Now we focus on the Bolza problem and we give the infinite-dimensional version of Theorem \ref{PMP}.
Let $L\colon [0,T]\times \mathcal{P}_b(C) \times  C^1_b(C;Z)\to \R$. We consider
\begin{equation}
\label{Bolinf-min}
\min_{u\in\mathcal U} \left\{\int_0^T L(t,\mu(t),u(t))\di t  \right\},
\end{equation}
subject to \eqref{Wmayeq}.
We assume for the running cost $L$ that a function $l\colon [0,T]\times \mathcal{P}_b(C) \times C \times  C^1_b(C;Z)\to \R$ exists such that
$$
L(t,\mu,u)=\int_C l(t,\mu,c,u(c)) \di \mu(c).
$$
Moreover, the following hold.
\begin{tcolorbox}
{\bf (HL)}:
\begin{itemize}
\item[$(a)$] there exists a constant $L> 0$ such that for every $t\in [0,T]$, $\mu,\tilde\mu \in \mathcal{P}_c(C)$ and $u\in U$
\begin{equation*}
|L(t,\mu,u)-L(t,\tilde\mu,u)| \leq L W_1(\mu,\tilde\mu);
\end{equation*}
\item[$(b)$] there exists $M>0$ such that for every $t\in[0,T]$, $\mu \in \mathcal{P}_c(C)$ and $u\in U$ there holds
\begin{equation*}
|L(t,\mu,u)| \leq  M\left( 1 + m_1(\mu)\right);
\end{equation*}
\item[$(c)$] for every $\mu\in\mathcal{P}_c(C)$ and $u\in U$ the map $t\mapsto L(t,\mu,u)$ belongs to $L^1([0,T];\R)$;
\item[$(d)$] for every $t\in [0,T]$ and $\mu\in\mathcal{P}_c(C)$ the map $u\mapsto L(t,\mu,u)$ is such that
$$
\|L(t,\mu,u)-L(t,\mu,\tilde{u})\|_E \leq \omega(\|u-\tilde{u}\|_{C_b^0}),
$$
where $\omega \colon [0,+\infty) \to [0,+\infty)$ is not dependent on $t$ and with $\lim_{s\to 0^+} \omega(s)=0$;
\item[$(e)$] for every $t\in [0,T]$ and $u\in U$ the map $\PC \ni \mu \mapsto L(t,\mu,u)\in \R$ is locally differentiable at any $\mu$ in the sense of the Definition \ref{Wmudiff} with differential $\nabla_\mu L:=\nabla_\mu L(t,\mu,u)$ and the application $\PC\times U\times C \ni (\mu,u,\tilde c) \mapsto \nabla_\mu L(\tilde c) \in E_C^*$ is continuous for every $t\in[0,T]$.
\end{itemize}
\end{tcolorbox}

We define the Hamiltonian $\mathcal{H}\colon [0,T]\times \mathcal{P}_b(C\times E_C^*) \times U \to \R$ for the problem \eqref{Bolinf-min}-\eqref{Wmayeq} as
\begin{equation}
\label{WhamL}
\mathcal{H}(t,\nu,\omega)=\int_{C\times E_C^*} \langle p, A(t,\pi_{\#}^1\nu,c,\omega(c))\rangle \di \nu(c,p) - L(t,\pi_{\#}^1\nu,\omega).
\end{equation}
As seen for the Mayer problem we can compute explicitly $\nabla_\nu \mathcal H(t,\nu,u)\colon  C \times E_C^* \to E_C^*\times E_C$:
\begin{eqnarray}
\label{form-diff-HL}
&&\nabla_\nu \mathcal H(t,\nu,u)(c,p) =  \\
&&\left(\begin{array}{l} D_c^{*}A(t,\pi_{\#}^1\nu,c,u)[p] + \displaystyle\int_{C\times E_C^*}\nabla_{\pi_{\#}^1\nu}^*A(t,\pi_{\#}^1\nu,\tilde c,u)(c)[\tilde p] \di\nu(\tilde c,\tilde p) - \nabla_{\pi_{\#}^1\nu}L(t,\pi_{\#}^1\nu,u)(c) \\
A(t,\pi_{\#}^1\nu,c,u)
\end{array}\right). \nonumber
\end{eqnarray}

Finally we state the Pontryagin maximum principle for the Bolza problem.

\begin{theorem}
\label{mainresL}
Let $(\muo,\uo)\in AC([0,T];\mathcal{P}_c(C))\times\U$ be an optimal pair for \eqref{Bolinf-min}-\eqref{Wmayeq}. Then there exists $\nuo\in AC([0,T]; \mathcal{P}_c(C\times E_C^*))$ which solves in distributional sense
\begin{equation*}\label{WsistL}
\begin{cases}
\dis\dd \nuo(t) = - \mathrm{div}_{(c,p)}\left(\left(J\nabla_{\nuo(t)}\mathcal{H}(t,\nuo(t),\uo(t))(\cdot,\cdot)\right)\nuo(t)\right)    & \text{ in } [0,T), \\
\pi^1_{\#}\nuo(t)=\muo(t), \\
\nuo(T)=\mu(T) \times \delta_{\mathbf{0}} \in \mathcal{P}_c(C\times E_C^*),
\end{cases}
\end{equation*}
where $\mathcal H$ and $\nabla_\nu \mathcal H$ are defined by \eqref{WhamL} and \eqref{form-diff-HL} respectively. Moreover the following maximality condition holds
\begin{equation*}
\label{WmaxcondL}
\mathcal{H}(t,\nuo(t),\uo(t))=\max_{\omega\in U}\{\mathcal{H}(t,\nuo(t),\omega)\} \qquad \text{for a.e. }t\in [0,T].
\end{equation*}
\end{theorem}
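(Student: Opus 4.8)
The plan is to reduce the Bolza problem \eqref{Bolinf-min}--\eqref{Wmayeq} to a Mayer problem of the type treated in Theorem~\ref{mainres}, exactly as Theorem~\ref{PMP} was obtained from Theorem~\ref{teo1} in the finite-dimensional setting. I would augment the state space by one scalar variable accounting for the running cost, setting $\widetilde C:=C\times\R$ (a closed convex subset of the separable Banach space $E\times\R$, with $E_{\widetilde C}=E_C\times\R$ reflexive and $(E_{\widetilde C})^*=E_C^*\times\R$). On $\widetilde C$ I would consider the augmented velocity field
\begin{equation*}
\widetilde A\bigl(t,\widetilde\mu,(c,y),u\bigr):=\bigl(A(t,\pi^1_\#\widetilde\mu,c,u),\,l(t,\pi^1_\#\widetilde\mu,c,u)\bigr)\in E_C\times\R,
\end{equation*}
where $\pi^1$ now denotes the projection of $\widetilde C$ onto its $C$-factor, together with the Mayer cost $\widetilde\varphi(\widetilde\mu):=\int_{C\times\R} y\,\di\widetilde\mu(c,y)$. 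The augmented continuity equation with initial datum $\widetilde\mu^0:=\mu^0\times\delta_0$ then has first marginal solving \eqref{Wmayeq} and second coordinate accumulating $\int_0^t l\,\di s$, so that $\widetilde\varphi(\widetilde\mu(T))=\int_0^T L(t,\mu(t),u(t))\,\di t$ and the two problems are equivalent.

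First I would verify that the augmented data fall under the hypotheses of Theorem~\ref{mainres}. Since $\widetilde A$ does not depend on $y$ and $\R$ is unconstrained, the sub-tangentiality condition {\bf (HA1)}-$(iv)$ for $\widetilde A$ reduces to the one for $A$, while the Lipschitz, growth and differentiability requirements {\bf (HA1)}--{\bf (HA2)} follow from the corresponding properties of $A$ together with those of the integrand $l$ entering the representation of $L$; here one also checks that the $y$-marginal of $\widetilde\mu(t)$ stays bounded (by the growth bound on $l$), so that $\widetilde\mu\in AC([0,T];\mathcal P_c(\widetilde C))$. The cost $\widetilde\varphi$ is linear, hence it trivially satisfies {\bf (H$\varphi$)} with the constant differential $\nabla_{\widetilde\mu}\widetilde\varphi\equiv(0,1)\in E_C^*\times\R$.

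I would then apply Theorem~\ref{mainres} to the augmented problem, obtaining $\widetilde\nu\in AC([0,T];\mathcal P_c(\widetilde C\times E_{\widetilde C}^*))$ solving the augmented Hamiltonian flow with terminal datum $\widetilde\nu(T)=(Id,(0,-1))_\#\widetilde\mu(T)$ and satisfying the maximality condition for the augmented Hamiltonian $\widetilde{\mathcal H}$. Writing the costate as $(p,p_y)\in E_C^*\times\R$, the crucial observation---mirroring the identity $\dd\po_{au}=0$ in the proof of Theorem~\ref{PMP}---is that, because $\widetilde A$ is independent of $y$, the $p_y$-component of $J\nabla_{\widetilde\nu}\widetilde{\mathcal H}$ vanishes, so the adjoint dynamics force $p_y\equiv-1$ along the whole interval. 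Defining $\nuo$ as the push-forward of $\widetilde\nu$ under $(c,y,p,p_y)\mapsto(c,p)$ and specialising $p_y=-1$, the augmented terminal condition collapses to $\nuo(T)=\muo(T)\times\delta_{\mathbf 0}$, the term $\int p_y\,l\,\di\widetilde\nu$ in $\widetilde{\mathcal H}$ becomes $-L(t,\pi^1_\#\nu,\omega)$, recovering \eqref{WhamL}, and the maximality condition for $\widetilde{\mathcal H}$ becomes \eqref{WmaxcondL}.

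The main obstacle I anticipate is the bookkeeping that reconciles the reduced differential with the prescribed formula \eqref{form-diff-HL}. In the augmented Mayer Hamiltonian the running-cost contribution enters through the separate objects $\Di_c\bigl[l(t,\mu,c,u(t,c))\bigr]$ (from $\Di^*_{(c,y)}\widetilde A$) and $\nabla_\mu l$ (from the nonlocal term), whereas \eqref{form-diff-HL} is written solely in terms of $\nabla_\mu L$, the Wasserstein differential of the integral cost supplied by {\bf (HL)}-$(e)$. I would therefore establish the product-rule identity
\begin{equation*}
\nabla_\mu L(t,\mu,u)(c)=\Di_c\bigl[l(t,\mu,c,u(t,c))\bigr]+\int_C\nabla_\mu l\bigl(t,\mu,c',u(t,c')\bigr)(c)\,\di\mu(c'),
\end{equation*}
which, once inserted after setting $p_y=-1$, makes the reduced first component of $J\nabla_{\widetilde\nu}\widetilde{\mathcal H}$ coincide exactly with \eqref{form-diff-HL}. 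Verifying this identity, together with the measurability and continuity of the integrand-level differentials $\Di_c l$ and $\nabla_\mu l$ needed to invoke Theorem~\ref{mainres}, is the only genuinely technical point; the remaining passages are the verbatim analogue of the Mayer-to-Bolza reduction already carried out for the particle system.
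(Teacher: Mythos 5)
Your reduction of the Bolza problem \eqref{Bolinf-min}--\eqref{Wmayeq} to a Mayer problem via the augmented state $(c,y)\in C\times\R$, with the auxiliary costate forced to equal $-1$ and the pair then projected back to $C\times E_C^*$, is exactly the route the paper takes: its proof of Theorem~\ref{mainresL} is precisely the remark that one repeats the Bolza-to-Mayer reduction of Theorem~\ref{PMP} in the infinite-dimensional setting, following \cite[Section 4.2]{BonFra}. The product-rule identity you isolate as the technical crux is already available as Lemma~\ref{calc-diff-Ham}, applied with $H(\mu,c)=l(t,\mu,c,u(t,c))$, which supplies $\nabla_\mu L$ as the sum of $\Di_c\bigl[l(t,\mu,c,u(t,c))\bigr]$ and the nonlocal term, so your plan matches the paper's intended argument in all essentials.
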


\begin{proof}
The proof is an infinite-dimensional adaptation of the proof of Theorem \ref{PMP}. We do not report the details here as they are very similar of \cite[Section 4.2]{BonFra}.
\end{proof}

\section{Model examples}
\label{s:example}

In this section we discuss some examples that fit into and justify the theoretical framework presented in Section~\ref{s3}. The general setting we consider is that of multi-agent multi-label systems, introduced in~\cite{AFMS, MS2020}. Control problems for such systems were studied in~\cite{AAMS}, focusing on existence and finite-particle approximations. As mentioned in the introduction, the state space used to model multi-agent multi-label systems accounts for agents' position $x \in \R^{d}$ and label $\lambda \in \mathcal{P} (K)$, for a metric space $(K, d_{K})$ of pure strategies. We briefly recall the functional setting of~\cite[Section~2.1]{AFMS}. For a Lipschitz function $\varphi:K \to \R$ we define respectively
$$
\mathrm{Lip}(\varphi):= \sup_{k_1\neq k_2\in K} \frac{|\varphi(k_1)-\varphi(k_2)|}{d_K(k_1,k_2)} \qquad\text{and}\qquad \|\varphi\|_{\mathrm{Lip}}:= \sup_{k\in K}|\varphi(k)| + \mathrm{Lip}(\varphi).
$$
We denote by
\begin{equation}\label{P1}
\mathcal{F} (K) :=  \overline{ {\rm span} ( \mathcal{P} (K) ) }^{\|\cdot\|_{\mathrm{BL}}} \subset \mathrm{Lip}(K)^*\,,
\end{equation}
where the closure in~\eqref{P1} is taken with respect to the Bounded Lipschitz norm
\begin{equation*}
\lVert \mu \rVert_{\mathrm{BL}}:= \sup \big\{\langle \mu,\varphi\rangle:  \varphi\in {\rm Lip} (K), \ \|\varphi\|_{\rm Lip} \leq 1\big\} \qquad \text{for $\mu\in {\rm Lip} (K)^*$}\,.
\end{equation*}
This space, also called Arens-Eells space in the literature, is a separable Banach space. In particular, $E=\R^d \times \mathcal{F}(K)$ is a separable Banach space, endowed with the norm $\| c\|_{E} := | x| + \| \lambda\|_{\rm BL}$ for $c = (x, \lambda) \in E$, where $| \cdot |$ denotes the standard Euclidean norm of~$\R^{d}$. This setting makes the state space $C:= \R^{d} \times \mathcal{P}(K)$ a closed and convex subset of $E$. The formulation of mean-field control problems and of optimality conditions thus face the lack of linear structure and smoothness of~$C$, which we addressed in the previous sections. We remark that the Euclidean-type of results of~\cite{BonFra, BonRos, Burger-Tse} cannot be applied.

We start by adapting the examples presented in~\cite{AAMS}. In the Section~\ref{s:entropic} we focus instead on entropy regularised systems, introduced in~\cite{ADEMS, BFS} for multi-agent multi-label systems~\cite{AFMS, MS2020}.

\subsection{Leader-follower dynamics}

In a leader-follower dynamics, the label variable $\lambda$ identifies to which group (e.g., leaders or followers) an agent belongs. We recall here the functional setting described in~\cite[Section~5]{Mor1} (see also \cite{Maas, Mielke}), which describes transition process from one group to another by means of reversible Markov chains on $n$ states. The set of labels $K$ is written as $K = \{ e_{1}, \ldots, e_{n}\}$, where $e_{i}$ are the elements of the canonical base of~$\R^{n}$, endowed with the distance~$d_{K}$
\begin{equation*}
0= d_{K} (e_{i}, e_{i}) \quad \text{for $i = 1, \ldots, n$}, \qquad  1=d_{K}(e_{i} , e_{j}) \quad \text{for $i \neq j$}.
\end{equation*}
The space of probability measures $\cP(K)$ is identified with the closed $(n-1)$-simplex 
\begin{align}
\Lambda_{n}:= \bigg\{ \lambda = (\lambda_{1}, \ldots, \lambda_{n}) \in \R^{n}: \lambda_{i} \geq 0\,, \ \sum_{i=1}^{n} \lambda_{i} = 1 \bigg\}\,.
\end{align}
The state space is represented by the convex subset $C= \R^{d} \times \mathcal{P}(K) \sim \R^{d} \times \Lambda_{n}$ of $E= \R^{d} \times \mathcal{F} (K)$, where $\mathcal{F}(K)$ is defined by \eqref{P1}. We further have that
\begin{displaymath}
E_{C} = \overline{ \R(C - C)} =\R^{d} \times  \{ \mu \in \mathcal{M} (K): \, \mu(K) = 0\}\,.
\end{displaymath}
As $\mathcal{P} (K)$ is identified with~$\Lambda_{n}$, we notice that $E_{C}$ may be identified with $\R^{d} \times \R^{n}_{0}$, where
\begin{displaymath}
\R^{n}_{0} := \bigg\{ \xi \in \R^{n}: \, \sum_{i=1}^{n} \xi_{i} = 0 \bigg\}\,.
\end{displaymath}
Hence,~$E_{C}$ is a reflexive Banach space. We fix $\mathcal{U} : = L^{1}([0, T]; U)$ for $U$ a not empty compact subset of $(C^{1}_{b} (C ; \R^{d}), \| \cdot \|_{C^{1}_{b}})$. For $\Psi \in \cP_{1}(C)$ and $u \in U$ the velocity field~$A(\Psi, c, u)$ in~\eqref{Wmayeq} takes the form
\begin{equation}
\label{e:AAA1000}
A(\Psi, c, u) := ( v_{\Psi} (c) + h_{\Psi}(c)  u \ , \ \cT_{\Psi} (c) )\,.
\end{equation}
We present below an explicit example of the velocity~$v_{\Psi} \colon C \to \R^{d}$ and of the transition operator $\cT_{\Psi} \colon C \to \R^{n}_{0}$ in the special case $n=2$. Here we remark that $\cT_{\Psi}$ takes the form
\begin{displaymath}
\cT_{\Psi} (c) := \mathcal{Q} (x, \Psi) \lambda \qquad \text{for $c = (x, \lambda) \in C$,}
\end{displaymath}
for a matrix-valued map $\mathcal{Q}\colon \R^{d} \times \mathcal{P}_{1} (C)  \to \R^{n \times n}$ satisfying the following:
\begin{itemize}
\item [$(\mathcal{Q}_{0})$] for every $(x, \Psi) \in \R^{d} \times \mathcal{P}_{1}(C)$ and every $i, j = 1, \ldots, n$, $\mathcal{Q}_{ij}(x, \Psi) \geq 0$ for~$i \neq j$, and $\mathcal{Q}_{ii}(x, \Psi) = - \sum_{j \neq i} \mathcal{Q}_{j i} (x, \Psi)$; 

\item[$(\mathcal{Q}_{1})$] for every $(x, \Psi) \in \R^{d} \times \mathcal{P}_{1}(C)$, $\mathcal{Q} ( x, \Psi)$ is \emph{reversible}, that is, there exists a unique $\sigma= \sigma (x, \Psi) \in \Lambda_{n}$ such that
\begin{displaymath}
\mathcal{Q}_{ij} ( x, \Psi ) \sigma_{j} = \mathcal{Q}_{ji} (x, \Psi) \sigma_{i} \qquad \text{for every $i, j = 1, \ldots, n$}\,,
\end{displaymath}
%\item[$(\mathcal{Q}_{2})$] $\Qq$ is locally Lipschitz, that is, for every $R>0$ there exists $L_{\Qq, R}>0$ such that for every $x_{1}, x_{2} \in B_{R}$ and every $\Psi_{1}, \Psi_{2} \in \Pp (\B^{Y}_{R})$
%\begin{displaymath}
%| \Qq( x_{1}, \Psi_{1}) - \Qq (x_{2}, \Psi_{2}) | \leq L_{\Qq, R} \big( | x_{1} - x_{2} | + W_{1} (\Psi_{1}, \Psi_{2}) \big)\,;
%\end{displaymath}
%
%\item[$(\Qq_{3})$] there exists $M_{\Qq} > 0$ such that for every $x \in \R^{d}$ and every $\Psi \in \Pp_{1}(Y)$
%\begin{displaymath}
%| \Qq ( x, \Psi) | \leq M_{\Qq} \big( 1 + |x| + m_{1} ( \Psi ) \big) \,.
%\end{displaymath}
\end{itemize}
together with local Lipschitz continuity and linear growth conditions (see also {\bf(HA1)}).

For the sake of simplicity, we now consider the case $n=2$ and present an explicit example of $ v_{\Psi}$, $h_{\Psi}$, and $\cT_{\Psi}$, following the lines of~\cite[Section~5.1]{AAMS}. We notice that $\Lambda_{2} = [0, 1]$, where $\lambda \sim 0$ indicates a strong leadership. As $K = \{ e_{1}, e_{2}\}$, this means that~$e_{1}$ is the label given to leaders, while $e_{2}$ represents followers. From now on, we will denote such labels with $L$ and~$F$, respectively. To tune the influence of the control $u$, the easiest choice is to fix $h_\Psi(x,\lambda ) = h ( \lambda )$ for a non-negative $C^{1}_{b}$-function $h\colon[0,1]\to\R$. Having in mind applications where a policy maker steers the whole systems by actively controlling only the leaders' population, $h$ may be assumed non-increasing and equal to zero if $\lambda$ is close to~$1$.

In order to define $v_{\Psi}$, we first partition the total population into leaders and followers, according to the value of~$\lambda$. Let us consider a $C^{1}$-function  $g\colon [0,1]\to [0,1]$. For $\Psi\in\cP(\R^d\times[0,1])$ we define the followers and leaders distributions as
\begin{equation}\label{I7}
\begin{split}
& \mu^F_\Psi(B):=\int_{B \times[0,1]} g(\lambda) \,\mathrm{d}\Psi(x, \lambda), \qquad 
 \mu^L_\Psi(B):=\int_{B \times [0,1]} (1-g(\lambda)) \,\mathrm{d}\Psi(x, \lambda),
\end{split}
\end{equation}
for each Borel set $B \subset \mathbb{R}^d$. 
%In particular, the sum $\mu_\Psi^F(B)+\mu_\Psi^L(B)$ coincides with the first marginal of $\Psi$ and therefore it counts the total population contained in $B$.
%In the discrete setting, the leaders and followers distributions in \eqref{I7} are given by
%\begin{equation}\label{I7-discrete}
%\begin{split}
%& \mu_{\Psi^N}^F(B)=\frac1N\sum_{i\,:\,x_i\in B}g(\lambda_i),
%\\
%&
% \mu_{\Psi^N}^L(B)=\frac1N\#\{i:x_i\in B\}-\mu_{\Psi^N}^F(B)=\frac1N\sum_{i\,:\,x_i\in B}(1-g(\lambda_i)).
%\end{split}
%\end{equation}
A rather standard choice for~$g$ is any smooth regularisation of the indicator function of the set $\{\lambda \ge m\}$, for a small threshold $m \ge 0$. In this way, we classify individuals with $\lambda \sim 0$ as strong leaders, while the remaining agents are treated as followers. Different choices of~$g$ may be considered as well, such as the easier $g(\lambda) = \lambda$, measuring the average influence of an agent in~$B$ on the surrounding agents.

The velocity $v_{\Psi}$ combines the effects of leaders and followers. One possibility is to define
\begin{align}
\label{e:vel-ex}
v_{\Psi}(x, \lambda) = & \   g (\lambda)  \big ( K^{FF} * \mu^{F}_{\Psi} (x)  + K^{LF} * \mu^{L}_{\Psi} (x) \big)  
\\
&
+ ( 1 - g (\lambda)) \big ( K^{FL}* \mu^{F}_{\Psi} (x) + K^{LL} * \mu^{L}_{\Psi} (x) \big) \,,\nonumber
\end{align}
where 
\begin{itemize}
\item[$(K)$] $K^{\star \bullet} \colon \R^{d} \to \R^{d}$ for $\star,\bullet\in\{F,L\}$ are $C^{1}_{b}$-kernels of interaction.
\end{itemize}
Clearly, different type of velocities may be designed starting from~\eqref{e:vel-ex}. One may consider weights different from~$g$, as well as convolutions with distributions different from~$\mu^{F}_{\Psi}$ and~$\mu^{L}_{\Psi}$. We refer to~\cite[formula (76)]{AAMS} for more general examples.

Finally, we introduce the transition operator~$\cT_{\Psi}$. Thanks to the identification of $\cP_1(K)$ with $[0,1]$, $\cT_\Psi(x,\lambda)$ is actually identified with a scalar, instead of taking values  in the 2-dimensional space $\mathcal{F} (K)$. We notice, indeed, that the convex constraint~$\lambda \in \mathcal{P} (K)$ requires $\mathcal{T}_{\Psi}$ to have a nontrivial kernel containing all constants. Hence, the first component of $\cT_\Psi(x,\lambda)$ uniquely determines the second one. Thus, we define
\begin{equation}\label{transition}
\cT_\Psi(x,\lambda)=-\alpha_F (x,\Psi)g_1(\lambda) + \alpha_L(x,\Psi)(1-g_1(\lambda)),
\end{equation}
with $\alpha_\bullet$ having the typical form
$$\alpha_\bullet(x,\Psi)=\int_{\R^d\times[0,1]} H_\bullet(x' - x ) \ell_\bullet(\lambda')\,\de\Psi(x',\lambda'),\quad\text{for $\bullet\in\{F,L\}$,}$$
and where 
\begin{itemize}
\item[$(\alpha)$] $g_1\colon[0,1]\to[0,1]$, $H_\bullet\colon\R^d\to\R_+$, and $\ell_\bullet\colon[0,1]\to[0,1]$ are $C^{1}$-functions with
\begin{equation}\label{e:alpha}
\alpha_\bullet\ge 0\,,\quad g_1(0)=0\,,\quad g_1(1)=1\,.
\end{equation}
\end{itemize}
The condition~$(\alpha)$ guarantees that the evolution of  $\lambda$ is confined into $[0,1]$ (see~\cite{MS2020}). Particular choices of~$g_{1}$ are $g_1(\lambda)=\lambda$, leading to  a linear master equation for~$\lambda$ for fixed~$x$ and~$\Psi$, or $g_1=g$, meaning that the switching rates $\alpha_F$ and $\alpha_L$ are activated depending on the population to which an agent belongs. The function $H_\bullet$ can be used to localize the effect of the overall distribution on the transition rates, while $\ell_\bullet$ tunes the influence of the surrounding agents according to their probability of belonging to the populations of followers or leaders.

In the setting described above, it was shown in~\cite{AAMS} that the $A(\Psi, c, u)$ defined in~\eqref{e:AAA1000} complies with {\bf (HA1)}. We now show that also {\bf (HA2)} are satisfied. Condition {\em (v)} of {\bf (HA2)} is trivial, as $A$ is linear with respect to the control variable. Let us show {\em (vi)--(vii)}. To this purpose, we have to compute the C-differential of the map $c \mapsto A(\Psi, c, u(t, c))$ for $c \in C$ and $u \in \mathcal{U}$ fixed. We write it piece-by-piece:
\begin{align}
D_{c} v_{\Psi} (x, \lambda) [c_{1}- c_{2}] = &\ ( \lambda_{1} - \lambda_{2}) g'(\lambda)  \big(  K^{FF} * \mu^{F}_{\Psi} (x)  + K^{LF} * \mu^{L}_{\Psi} (x) \big)  
\\
&
- (\lambda_{1} - \lambda_{2}) g'(\lambda)  \big( K^{FL}* \mu^{F}_{\Psi} (x) + K^{LL} * \mu^{L}_{\Psi} (x) \big)  \nonumber
\\
&
- g(\lambda)  \big( \nabla_{x} K^{FF} * \mu^{F}_{\Psi} (x) + \nabla_{x}K^{LF} * \mu^{L}_{\Psi} (x) \big) \cdot (x_{1} - x_{2}) \nonumber
\\
&
- ( 1 - g(\lambda) ) \big( \nabla_{x} K^{FL}* \mu^{F}_{\Psi} (x) + \nabla_{x} K^{LL} * \mu^{L}_{\Psi} (x)\big) \cdot (x_{1} - x_{2}) \nonumber
\\
D_{c} \mathcal{T}_{\Psi} (x, \lambda) [c_{1}- c_{2}] = &\ \nabla_{x} \alpha_{F} (x, \Psi) g_{1} (\lambda) \cdot (x_{1} - x_{2}) 
\\
&
+ \nabla_{x} \alpha_{L} (x, \Psi) (1 - g_{1} (\lambda)) \cdot (x_{1} - x_{2}) \nonumber \\
&
+ (\lambda_{1} - \lambda_{2}) g'_{1} (\lambda)  \big( \alpha_{L} (x, \Psi) - \alpha_{F} ( x . \Psi) \big) \nonumber 
\\
\nabla_{x} \alpha_{\bullet} (x,\Psi) = & \ - \int_{\R^d\times[0,1]} \nabla_{x} H_\bullet ( x' - x ) \ell_\bullet(\lambda')\,\de\Psi(x',\lambda'),\quad\text{for $\bullet\in\{F,L\}$,}
\end{align}
In particular, the map $(\Psi, c, u) \mapsto D_{c} A ( \Psi, c, u(t, c))$ is continuous on~$\mathcal{P}_{c} (C)  \times C \times U$. 

We now show that {\em (vii)} of {\bf (HA2)} is satisfied. We write the local Wasserstein differential $\nabla_{\Psi} A(\Psi, c, u)$ componentwise, according to Definition~\ref{Wmudiff}. For every $c, c_{1}, c_{2} \in C$ and for $\bullet, \star \in \{F, L\}$ we have
\begin{align}
\label{e:some-grad1000}
& \big[  \nabla_{\Psi}  g(\lambda) K^{\bullet\star} * \mu^{\bullet}_{\Psi} (x) \big](c_{1}) [c_{2} - c_{1}]
\\
&
\qquad = g(\lambda) \big[ g(\lambda_{1}) \nabla_{x} K^{\bullet\star} ( x_{1} - x) [x_{2} - x_{1}] + g'(\lambda_{1} ) (\lambda_{2} - \lambda_{1}) K^{\bullet\star} (x_{1} - x) \big]\,,\nonumber
\end{align}
from which we recover the expression of $\nabla_{\Psi} v_{\Psi} (x, \lambda)$. With the choice $h_{\Psi} (x, \lambda) = h(\lambda)$ we clearly have $\nabla_{\Psi}( h u) = 0$. As for $\nabla_{\Psi} \mathcal{T}_{\Psi}$, we notice that for $\bullet \in \{F, L\}$ and $c, c_{1}, c_{2} \in C$ it holds
\begin{align}
\label{e:some-grad2000}
& \big[\nabla_{\Psi}  g_{1} (\lambda) \alpha_{\bullet} (x, \Psi) \big] (c_{1}) [c_{2} - c_{1}] = g_{1} (\lambda) \big[ \nabla_{\Psi} \alpha_{\bullet} (x, \Psi) \big] (c_{1}) [c_{2} - c_{1}] 
\\
& \qquad = g_{1} (\lambda) \ell_{\bullet} (\lambda_{1} ) \nabla_{x} H_{\bullet} (x_{1} - x) [x_{2} - x_{1}] + g_{1} (\lambda) (\lambda_{2} - \lambda_{1}) \ell'_{\bullet} (\lambda_{1} )  H_{\bullet} (x_{1} - x) \,.\nonumber
\end{align}
The expression of $\nabla_{\Psi} \mathcal{T}_{\Psi}$ can be recovered from~\eqref{e:some-grad2000}. We infer from~\eqref{e:some-grad1000} and~\eqref{e:some-grad2000} that $A$ also satisfies {\em (vii)} of {\bf (HA2)}. Hence, the velocity field~$A$ in~\eqref{e:AAA1000} fits into our theoretical framework.

We conclude with an example of cost functional $L$. Besides a control cost, the cost functional given below features the competition between two effects. The first one represents the distance of the whole system from a desired goal $\bar{x} \in \R^{d}$. The second contribution translates the fact that leaders want to stay close to the followers' population. Hence, we take for $u \in \mathcal{U}$, $t \in [0, T]$, and $\Psi \in \mathcal{P}_{1} (C)$
\begin{align}
\label{e:cost1000}
L(\Psi, u) = & \ \int_{\R^{d} \times [0, 1]}\bigg(  |x-\bar x|^2 + \theta(\lambda) \Big|x - \int_{\R^{d}} x'\,\de\mu_\Psi^F(x') \Big|^2\bigg)  \, \di \Psi(x, \lambda)
\\
&
\nonumber + \int_{\R^{d} \times [0, 1]} | u(t , x, \lambda) |^{p} \, \di \Psi(x, \lambda) \,,
\end{align} 
where the $C^{1}_{b}$-function $\theta\colon[0,1]\to[0,1]$ selects leaders among the agents, so that $\theta(\lambda)=0$ when $\lambda$ is above a given threshold. A possible choice is $\theta(\lambda)=1-g(\lambda)$. Then, $L$ satisfies {\bf (HL)}.

The above scenario may as well be generalised to include any finite number of labels. Examples with two populations of competing leaders and one of followers have been provided in~\cite[Section~5.2]{AAMS}.

\subsection{Replicator-type models with entropy regularisation}
\label{s:entropic}

We consider here another example fitting into our theoretical framework, namely that of entropy regularised multi-label systems~\cite{ADEMS}. We fix a compact metric space $(V, d_{V})$ of pure strategies, a probability measure $\eta \in \mathcal{P}(V)$ with $\supp(\eta) = V$, and $p \in (1, +\infty)$. We define as ambient space $E := \R^{d} \times L^{p} (V, \eta)$, where
\begin{displaymath}
L^{p}(V, \eta) := \bigg\{ \ell \colon V \to \R: \, \int_{V} | \ell(v)|^{p} \, \di \eta(v) <+\infty \bigg\}.
\end{displaymath}
Denoting by $| \cdot |$ the Euclidian norm in $\R^d$ and by~$\| \cdot\|_{p}$ the $L^{p}$-norm in~$L^{p}(V, \eta)$, we endow~$E$ with the norm $\| \cdot\|_{E} = | \cdot| + \| \cdot\|_{p}$. With such choice,~$E$ is a separable and reflexive Banach space. For $0 < r < R <+\infty$ we set
\begin{displaymath}
C_{r, R} := \R^{d} \times \{ \ell \in L^{p} (V, \eta) \cap \mathcal{P}(V) : \, r \leq \ell(v) \leq R \ \text{for $\eta$-a.e.~$v \in V$}\}.
\end{displaymath}
In particular, $C_{r, R}$ is a convex and closed subset of~$E$. To shorten the notation, we denote by $c =(x, \ell)$ the generic element of~$E$.

Let us fix $0 < r < R < +\infty$. As in Section \ref{s3}, we consider $\mathcal{U} = L^{1}([0, T]; U)$ for $U$ a not empty compact subset of $(C^{1}_{b} (C_{r, R}; \R^{d}), \| \cdot \|_{C^{1}_{b}})$. As velocity field, for $\Psi \in \cP_{1}(C_{r, R})$ and $u \in U$ we consider
\begin{equation}
\label{e:AAA}
A(\Psi, c, u) := ( v_{\Psi} (c) + u , \cT_{\Psi} (c) + \varepsilon \mathcal{S} (\ell)),
\end{equation}
for some $\varepsilon>0$. Here, $v_{\Psi} \colon E \mapsto \R^{d}$ takes the form
\begin{displaymath}
v_{\Psi} (c) :=  \int_{C_{r,R}} K(x' - x) \, \di \Psi(x', \ell'),
\end{displaymath}
for a suitable interaction kernel $K \colon \R^{d} \to \R^{d}$ such that
\begin{enumerate}[label=(KE), ref=KE]
\item \label{KKK} $K \in C^{1}_{b} (\R^{d}; \R^{d})$.
\end{enumerate}
As for the operators $\cT_{\Psi} \colon C_{r, R} \to L^{p} (V, \eta)$ and $\mathcal{S} \colon C_{r, R} \to L^{0} (V, \eta)$ we consider
\begin{align*}
&
 \cT_{\Psi} (c) := \bigg( \int_{C_{r,R}} J(x, \cdot , x') \, \di \Psi(x', \ell') - \int_{V} \int_{C_{r,R}}  J(x, v' , x') \, \ell(v')\, \di \Psi(x', \ell')  \, \di \eta(v')\bigg) \ell, \\
 & \mathcal{S} (\ell) := \bigg( \int_{V} \ell(v)\,\log(\ell (v)) \,\de\eta(v) - \log (\ell) \bigg) \ell,
\end{align*}
for a payoff function $J \colon \R^{d} \times V \times \R^{d} \to \R$ satisfying
\begin{enumerate}[label=(J.\arabic*), ref=J.\arabic*]
\item \label{J1} $J$ is bounded on $\R^{d} \times V \times \R^{d}$ and there exists a constant $L> 0$ such that for every $x_{1}, x'_{1}, x_{2}, x'_{2} \in \R^{d}$ and every $v \in V$
\begin{equation*}
| J (x_{1} , v , x'_{1} ) - J (x_{2} , v , x'_{2} )| \leq L \big( | x_{1} - x_{2}| + | x'_{1} - x'_{2}| \big);
\end{equation*}
\item \label{J2} for every $v \in V$, the map $(x, x') \mapsto J(x, v, x')$ is differentiable with $(x, x') \mapsto (\nabla_{x} J(x, v, x'), \nabla_{x'} J(x, v, x'))$ continuous in  $\R^{d}\times \R^{d}$.
\end{enumerate}
The operator~$\cT_{\Psi}$ introduced above fits in the theoretical framework developed in~\cite{ADEMS} and covers the paramount example of the replicator dynamics, described in~\cite{BFS} in the entropy regularised setting. Under the assumptions~\eqref{KKK} and \eqref{J1}--\eqref{J2}, it has been shown in~\cite[Proposition~3.2]{ADEMS} and in~\cite[Proposition~5.8]{MS2020} that for every $\varepsilon>0$ there exists $0 < r_{\varepsilon} < R_{\varepsilon} <+\infty$ such that, setting $C := C_{r_{\varepsilon}, R_{\varepsilon}}$, the velocity field $A$ defined in~\eqref{e:AAA} complies with assumptions  {\em(i)}--{\em (iv)} in {\bf (HA1)}.
Moreover, we have that
\begin{displaymath}
E_{C} = \overline{\R( C - C)} = \R^{d} \times \bigg\{ \ell \in L^{p} (V, \eta): \, \int_{V} \ell (v) \, \di \eta(v) = 0\bigg\}\,.
\end{displaymath}
Hence, $E_{C}$ is reflexive.

We now verify that $A$ also satisfies the set of assumptions~{\bf (HA2)}. As before, condition~{\em (v)} of {\bf (HA2)} is trivial in view of the linear dependence of~$A$ on the control variable~$u$. As for the $C$-differentiability of $A$, we write the $C$-differential piece-by-piece as
\begin{align*}
& D_{c} v_{\Psi} (c) [c_{1} - c_{2}] =  - \int_{C} \nabla K(x' - x)(x_{1} - x_{2}) \, \di \Psi(x', \ell'), \\
& D_{c} \cT_{\Psi} (c) [c_{1} - c_{2}] =  \bigg( \int_{C} \nabla_{x} J(x, \cdot , x') \cdot (x_{1} - x_{2})  \, \di \Psi(x', \ell')
 \\
 &
\qquad - \int_{V} \int_{C}  \nabla_{x} J(x, v' , x')\cdot (x_{1} - x_{2}) \, \ell(v')\, \di \Psi(x', \ell')  \, \di \eta(v')\bigg) \ell
 \\
 &
 \qquad + \bigg( \int_{C} J(x, \cdot , x') \, \di \Psi(x', \ell') - \int_{V} \int_{C}  J(x, v' , x') \, \ell(v')\, \di \Psi(x', \ell')  \, \di \eta(v')\bigg) ( \ell_{1} - \ell_{2})
 \\
 &
\qquad  - \ell \int_{V} \int_{C}  J(x, v' , x') \, (\ell_{1}(v') - \ell_{2}(v') ) \, \di \Psi(x', \ell')  \, \di \eta(v'),
\\
&
D_{c} \mathcal{S} (c) [c_{1} - c_{2}] = \bigg( \int_{V} \ell(v)\,\log(\ell (v)) \,\de\eta(v) - \log (\ell) -1 \bigg) (\ell_{1} - \ell_{2})
\\
&
\qquad +  \ell  \int_{V} (\ell_{1} (v) - \ell_{2} (v)) ( 1 + \log(\ell(v)))\, \di \eta(v),
\end{align*}
for every $\Psi \in \cP(C)$ and every $c_{1}, c_{2} \in C$ (recall that $c_{i} = (x_{i}, \ell_{i})$ for $i = 1, 2$). In particular,~\eqref{KKK} and~\eqref{J1}--\eqref{J2} imply that~$A$ satisfies {\em (vi)} of {\bf (HA2)}.

Finally, the map $\PC \ni \Psi \mapsto A(\Psi,c,u)\in E_C$ is locally differentiable at any $\Psi$ in the sense of the Definition~\ref{Wmudiff} with differential
\begin{align*}
& \big[ \nabla_{\Psi} A(\Psi, c , u)\big] (c_{1}) [c_{2} - c_{1}]
\\
&
\qquad = \left(
\begin{array}{cc}
\nabla K(x_{1} - x) [x_{2} - x_{1}] \\[2mm]
 \big[ \nabla_{x'} J(x, \cdot , x_{1}) \cdot( x_{2} - x_{1} ) - \int_{V}  \nabla_{x'} J(x, v, x_{1}) \cdot (x_{2} - x_{1}) \ell(v) \, \di \eta(v) \big]\, \ell(\cdot)
\end{array}\right),
\end{align*}
for every $c, c_{1}, c_{2} \in C$. In particular,~\eqref{KKK} and \eqref{J1}--\eqref{J2} yield~{\em (vii)} in {\bf (HA2)}.

\section*{Appendix}

We first prove the results whose proof we have postponed, then we give two technical results that we used in Section \ref{s3}.

\subsection*{A: Proofs of Lemma \ref{ext-bnd-inf}, Lemma \ref{Dphi-bnd} and Lemma \ref{conv-flussi-mis}}
\setcounter{definition}{0}
\setcounter{equation}{0}
\renewcommand{\theequation}{A.\arabic{equation}}
\renewcommand{\thedefinition}{A.\arabic{definition}}
\renewcommand{\thetheorem}{A.\arabic{theorem}}
\renewcommand{\theremark}{A.\arabic{remark}}
\renewcommand{\thelemma}{A.\arabic{lemma}}

\begin{proof}[Proof of Lemma \ref{ext-bnd-inf}]
Fix $c\in C$. We recall that $\Phi_{(s,t)}^{\mu^s}\in C_b^0(C;C)$. For brevity of notation we set $\tilde c:=\Phi_{(s,t)}^{\mu^s}(c)$. Let $v\in \mathbb{R}(C-C)$. Then $v=\alpha(c_1-c_2)=\alpha(c_1-\tilde c)-\alpha(c_2-\tilde c)$. It follows by definition of $C$-differentiability (see \eqref{ACdiff}) that
\begin{eqnarray*}
&& \|\Di_{\tilde c}A[v]\|_E = |\alpha|\|\Di_{\tilde c}A[c_1-\tilde c] - \Di_{\tilde c}A[c_2-\tilde c]\|_E \\
&&\stackrel{\eqref{Gat}}= |\alpha| \lim_{h\to 0^+} \left\|\frac{1}{h}\left(A{\scriptstyle\left(t,\mu(t),\tilde c+h(c_1-\tilde c),u(t,\tilde c+h(c_1-\tilde c))\right)} - A{\scriptstyle\left(t,\mu(t),\tilde c+h(c_2-\tilde c),u(t,\tilde c+h(c_2-\tilde c))\right)}\right) \right\|_E \\
&& \stackrel{\eqref{flow-cont},{\bf (HA1)}-(i)}\leq |\alpha|L \lim_{h\to 0^+} \left\{ h\|c_1-c_2\|_E + \|u(t,\tilde c+h(c_1-\tilde c))- u(t,\tilde c+h(c_2-\tilde c))\|_{C_b^0} \right\}.
\end{eqnarray*}
Since $u\in U$ which is compact in $C^1_b(C;Z)$, by Remark A.5 of \cite{AFMS}, there exists a positive constant $L_U$ such that, starting from the last inequality, we deduce
$$
\|\Di_{\tilde c}A[v]\|_E \leq L(1+L_U)\|v\|_E.
$$
By an abuse of notation we use the symbol $L$ to denote $L(1+L_U)$, hence, the result follows.
\end{proof}

\begin{proof}[Proof of Lemma \ref{Dphi-bnd}]
Let us first note that, by \eqref{Das-bnd}, the Cauchy problem \eqref{Dphi-edo} is well-defined and admits a unique solution $z_f(t,c)\in AC([s,T];E_C)$. In particular, we can define the family of diffeomorphisms $L_{(s,t)}^c\colon E_C\to E_C$ as $L_{(s,t)}^c[f]:=z_f(t,c)$. Since the differential equation in \eqref{Dphi-edo} is linear, we have that $L_{(s,t)}^c$ is a linear operator for every $(t,c)\in [s,T]\times C$. Moreover, by its definition
\begin{equation}
\label{Wapp29bis}
z_f(t,c)=f + \int_s^t \Di_{\Phi_{(s,\sigma)}^{\mu^s}(c)}A[z_f(\sigma,c)] \di\sigma,
\end{equation}
which, using \eqref{Das-bnd}, implies
$$
\|z_f(t,c)\|_{E}\leq \|f\|_{E} + L\int_s^t \|z_f(\sigma,c)\|_{E} \di\sigma.
$$
Then, after an application of the Gr\"onwall inequality, we obtain
$$
\|L_{(s,t)}^c[f]\|_E = \|z_f(t,c)\|_E \leq \|f\|_E\ex^{LT}.
$$
Hence $L_{(s,t)}^c\in \li(E_C;E_C)$ and $\|L_{(s,t)}^c\|_{\li(E_C;E_C)} \leq \ex^{LT}$,
consequently we deduce that
$$
\text{the map}\quad (t,c)\mapsto L_{(s,t)}^{c} \quad \text{belongs to } L^{\infty}_{\li \times \mu^s}\left([s,T]\times C; \li(E_C;E_C)\right).
$$
Let us now show that $L_{(s,t)}^c$ is exactly the $C$-differential of $\Phi_{(s,t)}^{\mu^s}$ at $c\in C$, thus proving the statement. By definition of $C$-differential, defining
$$
h:=\|c'-c\|_E, \quad z_h(t):=\frac{\Phi_{(s,t)}^{\mu^s}(c')-\Phi_{(s,t)}^{\mu^s}(c)}{h} \quad\text{and}\quad f:=\frac{c'-c}{h},
$$
we just have to prove that
\begin{equation}
\label{Wapp26}
\lim_{c'\to c} \left\|z_h(t) - z_f(t,c)\right\|_E = 0.
\end{equation}
Note that, by its definition, $z_{h}(t)$ solves
$$
\begin{cases}
\dd z_h(t) = \frac{1}{h}\left\{A\left(t,\mu(t),\Phi_{(s,t)}^{\mu^s}(c'),u(t,\Phi_{(s,t)}^{\mu^s}(c'))\right) - A\left(t,\mu(t),\Phi_{(s,t)}^{\mu^s}(c),u(t,\Phi_{(s,t)}^{\mu^s}(c))\right)\right\}   & \text{ in }(s,T], \\
z_h(s)=\frac{c'-c}{h}.
\end{cases}
$$
We can therefore rewrite the equation for $z_h(t)$ as
$$
\dd z_h(t) = \Di_{\Phi_{(s,t)}^{\mu^s}(c)}A[z_h(t)] + r_h(t),
$$
where
\begin{eqnarray*}
&& r_h(t):= \frac{\|\Phi_{(s,t)}^{\mu^s}(c')-\Phi_{(s,t)}^{\mu^s}(c)\|_E}{h} \times \\
&&\frac{A{\scriptstyle\left(t,\mu(t),\Phi_{(s,t)}^{\mu^s}(c'),u(t,\Phi_{(s,t)}^{\mu^s}(c'))\right)} - A{\scriptstyle\left(t,\mu(t),\Phi_{(s,t)}^{\mu^s}(c),u(t,\Phi_{(s,t)}^{\mu^s}(c))\right)} - \Di_{\Phi_{(s,t)}^{\mu^s}(c)}A[\Phi_{(s,t)}^{\mu^s}(c')-\Phi_{(s,t)}^{\mu^s}(c)]}{\|\Phi_t^{\mu^s}(c')-\Phi_t^{\mu^s}(c)\|_E},
\end{eqnarray*}
leading to
\begin{equation}
\label{Wapp29}
z_h(t)=f + \int_s^t \Di_{\Phi_{(s,\sigma)}^{\mu^s}(c)}A[z_h(\sigma)] \di\sigma + \int_s^t r_h(\sigma)\di\sigma.
\end{equation}
By \eqref{flow}, {\bf (HA1)}-$(i)$, and again applying the Gr\"onwall inequality, we obtain
\begin{equation}
\label{Wapp27}
\|\Phi_{(s,t)}^{\mu^s}(c')-\Phi_{(s,t)}^{\mu^s}(c)\|_E \leq h \ex^{LT},
\end{equation}
which, together to {\bf (HA2)}-$(vi)$, implies that
$$
\lim_{h\to 0} \|r_h(t)\|_E = 0 \qquad \text{for }t\in [s,T].
$$
Moreover, using again {\bf (HA1)}-$(i)$, \eqref{Das-bnd} and \eqref{Wapp27}, it easy to check that
$$
\|r_h(t)\|_E\leq 2L\ex^{LT} \qquad \text{for }t\in [s,T],
$$
so that, from an application of the Lebesgue dominated convergence theorem, we have
\begin{equation}
\label{Wapp28}
r_h(t)\to 0 \quad \text{in} \quad L^1([s,T];E_C) \qquad \text{as }h\to 0.
\end{equation}
Finally, by \eqref{Wapp29bis} and \eqref{Wapp29}, we deduce that
\begin{eqnarray*}
\|z_h(t)-z_f(t,c)\|_E &\leq& \int_s^t \|\Di_{\Phi_{(s,\sigma)}^{\mu^s}(c)}A[z_h(\sigma)-z_f(\sigma,c)]\|_E \di\sigma + \int_s^t \|r_h(\sigma)\|_E \di\sigma \\
&\stackrel{\eqref{Das-bnd}}\leq& L \int_s^t \|z_h(\sigma)-z_f(\sigma,c)\|_E \di\sigma + \delta_h,
\end{eqnarray*}
where
\begin{equation}
\label{Wapp30}
\delta_h:= \int_s^T \|r_h(\sigma)\|_E \di\sigma \stackrel{\eqref{Wapp28}}\to 0 \qquad \text{as }h\to 0.
\end{equation}
Applying again the Gr\"onwall inequality, we conclude that
$$
\|z_h(t)-z_f(t,c)\|_E \leq \delta_h\ex^{LT}.
$$
Therefore, recalling that $h=\|c'-c\|_E$ and by \eqref{Wapp30}, we get \eqref{Wapp26}.
\end{proof}

\begin{proof}[Proof of Lemma \ref{conv-flussi-mis}]
We divide the proof in agreement with the three statements. \\
\textit{Proof of $(a)$ and $(b)$.} Let $\hat{\mu}_\eps$ be the solution of \eqref{Wmayeq} with $u$ replaced by $u_\eps$ and with initial datum $\mu^0$. Let $\Phi_{(0,t)}^{\eps,\mu^0}$ be the family of non-local flows associated to $\hat{\mu}_\eps$. We know by \eqref{flow-cont} that $\Phi_{(0,t)}^{\mu^0}, \Phi_{(0,t)}^{\eps,\mu^0}, \Phi_{(0,t)}^{\eps,\mu^0_\eps}$ belong to $C_b^0([0,T]\times C;C)$. Fix $c\in C$. It follows from triangle inequality that
\begin{equation}
\label{appen1}
\left\| \Phi_{(0,t)}^{\eps,\mu^0_\eps}(c) - \Phi_{(0,t)}^{\mu^0}(c) \right\|_E \leq \left\| \Phi_{(0,t)}^{\eps,\mu^0_\eps}(c) - \Phi_{(0,t)}^{\eps,\mu^0}(c) \right\|_E + \left\| \Phi_{(0,t)}^{\eps,\mu^0}(c) - \Phi_{(0,t)}^{\mu^0}(c) \right\|_E.
\end{equation}
We focus on the second term of the right-hand side of \eqref{appen1}. We define
$$
F_\eps(t):= \sup_{C} \left\| \Phi_{(0,t)}^{\eps,\mu^0}(c) - \Phi_{(0,t)}^{\mu^0}(c) \right\|_E \quad \text{for }t\in [0,T].
$$
It follows from \eqref{flow} and {\bf (HA1)}-$(i)$ that
\begin{eqnarray}
\label{appen2}
&&\|\Phi_{(0,t)}^{\eps,\mu^0}(c) - \Phi_{(0,t)}^{\mu^0}(c)\|_E \\
&& \leq \int_0^t \left\|A{\scriptstyle\left(s,\hat{\mu}_\eps(s),\Phi_{(0,s)}^{\eps,\mu^0}(c),u_\eps(s,\Phi_{(0,s)}^{\eps,\mu^0}(c))\right)} - A{\scriptstyle\left(s,\mu(s),\Phi_{(0,s)}^{\mu^0}(c),u(s,\Phi_{(0,s)}^{\mu^0}(c))\right)}\right\|_E \di s \nonumber \\
&& \leq L \int_0^t \left\{W_1(\hat{\mu}_\eps(s),\mu(s)) + \|\Phi_{(0,s)}^{\eps,\mu^0}(c) - \Phi_{(0,s)}^{\mu^0}(c)\|_E + \|u_\eps{\scriptstyle(s,\Phi_{(0,s)}^{\eps,\mu^0}(c))} - u{\scriptstyle(s,\Phi_{(0,s)}^{\mu^0}(c))}\|_{C_b^0} \right\}\di s \nonumber \\
&& \leq L \int_0^t W_1(\hat{\mu}_\eps(s),\mu(s)) \di s + L\int_0^t \|\Phi_{(0,s)}^{\eps,\mu^0}(c) - \Phi_{(0,s)}^{\mu^0}(c)\|_E \di s \nonumber \\
&& \phantom{\leq} + L\int_0^t \|u_\eps{\scriptstyle(s,\Phi_{(0,s)}^{\eps,\mu^0}(c))} - u_\eps{\scriptstyle(s,\Phi_{(0,s)}^{\mu^0}(c))}\|_{C_b^0} \di s + L\int_0^t \|u_\eps{\scriptstyle(s,\Phi_{(0,s)}^{\mu^0}(c))} - u{\scriptstyle(s,\Phi_{(0,s)}^{\mu^0}(c))}\|_{C_b^0} \di s. \nonumber
\end{eqnarray}
Now we estimate the terms on the right-hand side of \eqref{appen2}. As for the first term, by definition of Wasserstein distance, we deduce that
\begin{equation}
\label{appen3}
\int_0^t W_1(\hat{\mu}_\eps(s),\mu(s)) \di s \leq \int_0^t \int_C \| \Phi_{(0,s)}^{\eps,\mu^0}(c) - \Phi_{(0,s)}^{\mu^0}(c) \|_E\di \mu_0(c) \di s \leq \int_0^t F_\eps(s) \di s.
\end{equation}
Regarding the third term, since $u_\eps,u\in U$ which is compact in $(C^1_b(C;Z),\|\cdot\|_{C^1_b})$, there exists a positive constant $L_U$ such that
\begin{equation}
\label{appen4}
\int_0^t \|u_\eps{\scriptstyle(s,\Phi_{(0,s)}^{\eps,\mu^0}(c))} - u_\eps{\scriptstyle(s,\Phi_{(0,s)}^{\mu^0}(c))}\|_{C_b^0} \di s \leq L_U\int_0^t F_\eps(s) \di s.
\end{equation}
Finally, by assumption, $u_\eps \to u$ in $L^1([0,T]; (U,\|\cdot\|_{C_b^1}))$, then
\begin{equation}
\label{appen5}
\int_0^t \|u_\eps{\scriptstyle(s,\Phi_{(0,s)}^{\mu^0}(c))} - u{\scriptstyle(s,\Phi_{(0,s)}^{\mu^0}(c))}\|_{C_b^0} \di s \to 0 \quad \text{as }\eps\to 0.
\end{equation}
Combining \eqref{appen3}, \eqref{appen4} and \eqref{appen5} with \eqref{appen2}, we obtain that
$$
F_\eps(t) \leq L(2+L_U)\int_0^t F_\eps(s) \di s + \delta_\eps,
$$
where $\delta_\eps$ is a positive constant not depending on $c$ and $t$ which goes to 0 as $\eps\to 0$. Hence, applying Gr\"onwall inequality, we conclude that
\begin{equation}
\label{appen6bis}
F_\eps(t)\leq \delta_\eps \ex^{L(2+L_U)T},
\end{equation}
whence
\begin{equation}
\label{appen6}
\sup_C \left\| \Phi_{(0,t)}^{\eps,\mu^0}(c) - \Phi_{(0,t)}^{\mu^0}(c) \right\|_E \to 0 \quad \text{as }\eps \to 0 \quad\text{uniformly in }t\in [0,T].
\end{equation}
As regards the first term on the right-hand side of \eqref{appen1}, since by Theorem 3.3 of \cite{AFMS} we have for a positive constant $M$
\begin{equation}
\label{appen7bis}
W_1(\mu_\eps(t),\hat{\mu}_\eps(t))\leq \ex^{Mt}W_1(\mu_\eps^0,\mu^0) \quad \text{for } t\in [0,T],
\end{equation}
and recalling that, by assumption, $W_1(\mu_\eps^0,\mu^0)\to 0$ as $\eps \to 0$, we can proceed in the same way used for the second term to obtain that
\begin{equation}
\label{appen7}
\sup_C \left\| \Phi_{(0,t)}^{\eps,\mu^0_\eps}(c) - \Phi_{(0,t)}^{\eps,\mu^0}(c) \right\|_E \to 0 \quad \text{as }\eps \to 0 \quad\text{uniformly in }t\in [0,T].
\end{equation}
Hence, combining \eqref{appen6} and \eqref{appen7} with \eqref{appen1}, we deduce $(a)$. The proof of $(b)$ is an easy consequence of what we have just seen. Indeed, by triangle inequality,
\begin{eqnarray*}
W_1(\mu_\eps(t),\mu(t)) & \leq & W_1(\mu_\eps(t),\hat{\mu}_\eps(t)) + W_1(\hat{\mu}_\eps(t),\mu(t)) \\
& \stackrel{\eqref{appen7bis}}\leq & \ex^{MT}W_1(\mu_\eps^0,\mu^0) + \int_C \| \Phi_{(0,t)}^{\eps,\mu^0}(c) - \Phi_{(0,t)}^{\mu^0}(c) \|_E \di \mu^0(c) \\
& \stackrel{\eqref{appen6bis}}\leq &  \ex^{MT}W_1(\mu_\eps^0,\mu^0) + \delta_\eps \ex^{L(2+L_U)T},
\end{eqnarray*}
so $(b)$ follows. \\
\textit{Proof of $(c)$.} Fix $t\in [0,T]$ and $c\in C$. We define for a fixed $f\in E_C$
$$
z_\eps(t,c):=\Di_c\Phi_{(0,t)}^{\eps,\mu_\eps^0}[f] \quad\text{and}\quad z(t,c):=\Di_c\Phi_{(0,t)}^{\mu^0}[f].
$$
By Lemma \ref{Dphi-bnd} we know that $\|z(t,c)\|_E\leq M\|f\|_E$ for a positive constant $M$ and that
\begin{align}
\label{appen8}
& \|z_\eps(t,c)-z(t,c)\|_E \\
& \leq \int_0^t \| \Di_{\Phi_{(0,s)}^{\eps,\mu_\eps^0}(c)}A{\scriptstyle(s,\mu_\eps(s),\Phi_{(0,s)}^{\eps,\mu_\eps^0}(c),u_\eps(s,\Phi_{(0,s)}^{\eps,\mu_\eps^0}(c)) )}[z_\eps(s,c)-z(s,c)]\|_E \di s \nonumber \\
& + \int_0^t \|(\Di_{\Phi_{(0,s)}^{\eps,\mu_\eps^0}(c)}A{\scriptstyle(s,\mu_\eps(s),\Phi_{(0,s)}^{\eps,\mu_\eps^0}(c),u_\eps(s,\Phi_{(0,s)}^{\eps,\mu_\eps^0}(c)) )} - \Di_{\Phi_{(0,s)}^{\mu^0}(c)}A{\scriptstyle(s,\mu(s),\Phi_{(0,s)}^{\mu^0}(c),u(s,\Phi_{(0,s)}^{\mu^0}(c)))})[z(s,c)]\|_E \di s \nonumber \\
& \stackrel{\eqref{Das-bnd}}\leq L\int_0^t \|z_\eps(s,c)-z(s,c)\|_E \di s \nonumber \\
& + M\|f\|_E \int_0^t \|\Di_{\Phi_{(0,s)}^{\eps,\mu_\eps^0}(c)}A{\scriptstyle(s,\mu_\eps(s),\Phi_{(0,s)}^{\eps,\mu_\eps^0}(c),u_\eps(s,\Phi_{(0,s)}^{\eps,\mu_\eps^0}(c)) )} - \Di_{\Phi_{(0,s)}^{\mu^0}(c)}A{\scriptstyle(s,\mu(s),\Phi_{(0,s)}^{\mu^0}(c),u(s,\Phi_{(0,s)}^{\mu^0}(c)))}\|_{\li(E_C;E_C)} \di s. \nonumber
\end{align}
Furthermore, by assumption {\bf (HA2)}-$(vi)$ and using $(a)$ and the convergence of~$u_{\varepsilon}$ to $u$ in $L^{1} ( [ 0 , T ] ; (U, \| \cdot \|_{C^{1}_{b}} ) )$, we have for a.e.~$t \in [0, T]$ that as $\eps\to 0$
$$
\left\|\Di_{\Phi_{(0,t)}^{\eps,\mu_\eps^0}(c)}A{\scriptstyle(t,\mu_\eps(t),\Phi_{(0,t)}^{\eps,\mu_\eps^0}(c),u_\eps(t,\Phi_{(0,t)}^{\eps,\mu_\eps^0}(c)) )} - \Di_{\Phi_{(0,t)}^{\mu^0}(c)}A{\scriptstyle(t,\mu(t),\Phi_{(0,t)}^{\mu^0}(c),u(t,\Phi_{(0,t)}^{\mu^0}(c)))} \right\|_{\li(E_C;E_C)} \to 0.
$$
Since, by Lemma \ref{ext-bnd-inf}, both $\Di_{\Phi_{(0,t)}^{\eps,\mu_\eps^0}(c)}A$ and $\Di_{\Phi_{(0,t)}^{\mu^0}(c)}A$ are uniformly bounded in $[0,T]$, we can apply the Lebesgue theorem to the last term on the right-hand side of \eqref{appen8} obtaining that
$$
\|z_\eps(t,c)-z(t,c)\|_E \leq L\int_0^t \|z_\eps(s,c)-z(s,c)\|_E \di s + M\|f\|_E \delta_\eps(c),
$$
where $\delta_\eps(c)$ is a positive constant depending on $c$ which goes to 0 as $\eps\to 0$. It follows by an application of the Gr\"onwall inequality that
$$
\|z_\eps(t,c)-z(t,c)\|_E \leq M\|f\|_E \delta_\eps(c)\ex^{LT}.
$$
Hence, recalling the definitions of $z_\eps$ and $z$, we conclude that
$$
\displaystyle\sup_{E_C\ni f\neq 0}\frac{\|(\Di_c\Phi_{(0,t)}^{\eps,\mu_\eps^0}-\Di_c\Phi_{(0,t)}^{\mu^0})[f]\|_E}{\|f\|_E} \leq M\delta_\eps(c)\ex^{LT} \to 0 \quad \text{as }\eps \to 0,
$$
thus finding $(c)$.
\end{proof}

\subsection*{B: Auxiliary lemmas}
\setcounter{definition}{0}
\setcounter{equation}{0}
\renewcommand{\theequation}{B.\arabic{equation}}
\renewcommand{\thedefinition}{B.\arabic{definition}}
\renewcommand{\thetheorem}{B.\arabic{theorem}}
\renewcommand{\theremark}{B.\arabic{remark}}
\renewcommand{\thelemma}{B.\arabic{lemma}}

In what follows let $X$ be a closed and convex subset of a separable Banach space $Y$ and let $E_X$ be the closure of the vector subspace $\R(X - X)$ in~$Y$.

\begin{lemma}
\label{ODE-continua}
Let $\mu$ belong to $\mathcal{P}_c(X)$. Let $B_1\colon [0,T]\times X \to \li(E_X;E_X)$ and $B_2\colon [0,T]\times X \times X \to \li(E_X;E_X)$ be operators satisfying
\begin{itemize}
\item[$(i)$] the map $x\mapsto B_1(t,x)$ belongs to $C(X;\li(E_X;E_X))$ for a.e. $t\in [0,T]$ and $B_1\in L^{\infty}_{\li\times \mu}([0,T]\times X;\li(E_X;E_X))$;
\item[$(ii)$] the map $(x,\tilde x)\mapsto B_2(t,x,\tilde x)$ belongs to $C(X\times X;\li(E_X;E_X))$ for a.e. $t\in [0,T]$ and $B_2\in L^{\infty}_{\li\times \mu\times \mu}([0,T]\times X\times X;\li(E_X;E_X))$.
\end{itemize}
Then there exists a unique continuous weak solution $w\colon [0,T]\times X \to E_X$ of
\begin{equation*}
\label{}
\begin{cases}
\displaystyle \dd w(t,x) = B_1(t,x)[w(t,x)] + \int_X B_2(t,x,\tilde x)[w(t,\tilde x)] \di\mu(\tilde x) & \text{in }(0,T], \\
w(0,x) = y \in E_X.
\end{cases}
\end{equation*}
\end{lemma}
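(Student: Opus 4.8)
The plan is to rewrite the Cauchy problem in its equivalent integral (Volterra) form and to solve it by a contraction argument in a space of continuous $E_X$-valued maps. Set $K:=\supp(\mu)$, which is compact because $\mu\in\mathcal{P}_c(X)$, and let $L_B$ denote the maximum of the $L^\infty$-norms of $B_1$ and $B_2$, which are finite by $(i)$ and $(ii)$. A map $w$ is a continuous weak solution precisely when it is continuous and satisfies for every $(t,x)\in[0,T]\times X$
\begin{equation}
\label{B-volt}
w(t,x)=y+\int_0^t B_1(s,x)[w(s,x)]\,\di s+\int_0^t\!\!\int_X B_2(s,x,\tilde x)[w(s,\tilde x)]\,\di\mu(\tilde x)\,\di s .
\end{equation}
Since $\mu$ is a probability measure, any solution of \eqref{B-volt} obeys $\|w(t,x)\|_E\le \|y\|_E+2L_B\int_0^t\sup_{x'}\|w(s,x')\|_E\,\di s$, so by Gr\"onwall it is bounded by $\|y\|_E\,\ex^{2L_BT}$ uniformly on $[0,T]\times X$; hence it is natural to work in the Banach space $\mathcal X:=C^0_b([0,T]\times X;E_X)$ with the sup-norm and to define $\mathcal T\colon\mathcal X\to\mathcal X$ as the right-hand side of \eqref{B-volt}.

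First I would check that $\mathcal T$ maps $\mathcal X$ into itself. Boundedness is immediate from $\|\mathcal T w(t,x)\|_E\le\|y\|_E+2L_BT\|w\|_{\mathcal X}$. For continuity, fix $w\in\mathcal X$; for a.e.\ $s$ the maps $x\mapsto B_1(s,x)[w(s,x)]$ and $x\mapsto\int_X B_2(s,x,\tilde x)[w(s,\tilde x)]\,\di\mu(\tilde x)$ are continuous by $(i)$, $(ii)$ and the continuity of $w$, and they are dominated by the integrable function $s\mapsto 2L_B\|w\|_{\mathcal X}$; Lebesgue's dominated convergence theorem then yields continuity of $x\mapsto\mathcal T w(t,x)$ for each $t$. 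Continuity in $t$ follows because $t\mapsto\mathcal T w(t,x)$ is Lipschitz with constant $2L_B\|w\|_{\mathcal X}$ independent of $x$; this equicontinuity in $t$ together with continuity in $x$ gives joint continuity, so $\mathcal T w\in\mathcal X$.

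Then I would show that a suitable power of $\mathcal T$ is a contraction. For $w_1,w_2\in\mathcal X$, using that $\mu$ is a probability measure and the uniform bound $L_B$,
\begin{equation}
\label{B-contr}
\sup_{x}\|\mathcal T w_1(t,x)-\mathcal T w_2(t,x)\|_E\le 2L_B\int_0^t\sup_{x}\|w_1(s,x)-w_2(s,x)\|_E\,\di s,
\end{equation}
and iterating \eqref{B-contr} yields $\|\mathcal T^n w_1-\mathcal T^n w_2\|_{\mathcal X}\le\frac{(2L_BT)^n}{n!}\|w_1-w_2\|_{\mathcal X}$. For $n$ large $\mathcal T^n$ is a contraction, so by the generalised Banach fixed point theorem $\mathcal T$ has a unique fixed point $w\in\mathcal X$, which is the sought continuous weak solution; since $\dd w$ is bounded in $t$, the curve $t\mapsto w(t,x)$ is Lipschitz, hence the integral identity \eqref{B-volt} is indeed equivalent to the weak formulation. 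Uniqueness among all continuous weak solutions is immediate, as any such solution solves \eqref{B-volt} and is therefore a fixed point of $\mathcal T$.

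The main obstacle is purely technical: verifying that $\mathcal T$ preserves continuity in $x$ despite $B_1$ and $B_2$ being only measurable (a.e.\ continuous) in $t$, which forces one to pass the spatial limit through the time integral via dominated convergence rather than by uniform continuity. The nonlocal coupling, though it prevents solving pointwise in $x$, causes no real difficulty: because $\mu$ is a probability measure the nonlocal operator has operator norm at most $L_B$, so it enters the Gr\"onwall estimate \eqref{B-contr} on exactly the same footing as the local term.
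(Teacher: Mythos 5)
Your proof is correct and takes essentially the same route as the paper: both recast the Cauchy problem as the equivalent Volterra integral equation and produce the unique continuous solution as a fixed point of the resulting operator on $C^0_b([0,T]\times X;E_X)$. The only differences are cosmetic — the paper makes the operator itself a contraction via a Bielecki-type weighted norm $\sup_{[0,T]\times X}\mathrm{e}^{-2\alpha t}\|w(t,x)\|_Y$ and invokes the Banach--Caccioppoli theorem (citing the analogous computation in \cite{BonRos}), whereas you iterate the operator to gain the factor $(2L_BT)^n/n!$ and apply the generalised fixed point theorem, additionally spelling out the self-mapping and continuity verifications that the paper leaves to the cited reference.
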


\begin{proof}
We consider the metric space $C_b^0([0,T]\times X;E_X)$ equipped with the norm
$$
\|w\|_{\alpha}:= \sup_{[0,T]\times X} \ex^{-2\alpha t}\|w(t,x)\|_Y.
$$
Set $F:=(C_b^0([0,T]\times X;E_X),\|\cdot\|_{\alpha})$. Since $E_X$ is a closed subset of $Y$ which is a Banach space, it follows that $E_X$ is a complete metric space. Then $F$ is a complete metric space. We define the operator $S\colon F\to F$ as
$$
S(w)(t,x):= y + \int_0^t B_1(s,x)[w(s,x)]\di s + \int_0^t \int_X B_2(s,x,\tilde x)[w(s,\tilde x)]\di \mu(\tilde x) \di s.
$$
Thanks to the assumptions $(i)$-$(ii)$, $S$ is well-defined. Following the same reasoning as in the first part of the proof of Proposition 5 in \cite{BonRos}, it is easy to prove that, with a suitable choice of $\alpha$, $S$ is a contraction mapping. Then, applying the Banach-Caccioppoli theorem, the result follows.
\end{proof}

\begin{lemma}
\label{calc-diff-Ham}
Let $\mathcal H\colon \mathcal{P}_b(X) \to \R$ be
$$
\mathcal{H}(\mu)=\int_X H(\mu,x)\di\mu(x),
$$
where $H\colon \mathcal{P}_b(X)\times X \to \R$. Assume that $H$ is $X$-differentiable in the sense of \eqref{ACdiff} with $X$-differential satisfying
\begin{itemize}
\item[$(i)$] the map $(\mu,x)\mapsto D_xH(\mu,x)$ belongs to $C(\mathcal{P}_b(X)\times X;E_X^*)$;
\item[$(ii)$] there exists a positive constant $L$ not depending on $\mu\in\mathcal{P}_b(X)$ and $x\in X$ such that $\|D_xH(\mu,x)\|_{E_C^*}\leq L$ for every $\mu\in\mathcal{P}_b(X)$ and $x\in X$.
\end{itemize}
Moreover, assume that $H$ is differentiable in the sense of Definition \ref{Wmudiff} with $\mu$-differential satisfying
\begin{itemize}
\item[$(iii)$] the map $(\mu,\tilde x,x)\mapsto \nabla_\mu H(\mu,\tilde x)(x)$ belongs to $C(\mathcal{P}_c(X)\times X\times X;E_X^*)$;
\item[$(iv)$] the map $(\tilde x,x)\mapsto \nabla_\mu H(\mu,\tilde x)(x)$ belongs to $L^{\infty}_{\mu\times\mu}(X\times X;E_X^*)$ for every $\mu\in \mathcal{P}_c(X)$.
\end{itemize}
Then, $\mathcal{H}$ is differentiable at any $\mu\in\mathcal{P}_c(X)$ in the sense of Definition \ref{Wmudiff} and
\begin{equation*}
\label{}
\nabla_\mu\mathcal{H}(\mu)(x)=D_x H(\mu,x) + \int_X\nabla_\mu H(\mu,\tilde x)(x)\di\mu(\tilde x)
\end{equation*}
\end{lemma}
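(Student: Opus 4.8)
The plan is to verify Definition \ref{Wmudiff} directly: I fix $\mu\in\mathcal{P}_c(X)$, a radius $R>0$, a competitor $\nu\in\mathcal{P}(B_\mu(R))$ and an arbitrary plan ${\bm\gamma}\in\Gamma(\mu,\nu)$, and I decompose the increment $\mathcal{H}(\nu)-\mathcal{H}(\mu)$ through the intermediate integral $\int_X H(\nu,x)\di\mu(x)$:
\[
\mathcal{H}(\nu)-\mathcal{H}(\mu)=\underbrace{\Big(\int_X H(\nu,x)\di\nu(x)-\int_X H(\nu,x)\di\mu(x)\Big)}_{A}+\underbrace{\Big(\int_X H(\nu,x)\di\mu(x)-\int_X H(\mu,x)\di\mu(x)\Big)}_{B}.
\]
The term $A$ only sees the change of the base measure and will produce the $X$-differential contribution $D_xH$, while the term $B$ only sees the change in the measure-argument of $H$ and will produce the Wasserstein contribution $\int_X\nabla_\mu H(\mu,\tilde x)(x)\di\mu(\tilde x)$.

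For $A$, I would use that ${\bm\gamma}$ has marginals $\mu,\nu$ to rewrite it as $\int_{X\times X}\big[H(\nu,x_2)-H(\nu,x_1)\big]\di{\bm\gamma}(x_1,x_2)$, and apply the $X$-differentiability of $H(\nu,\cdot)$ at $x_1$ (valid by hypothesis, with differential $D_{x_1}H(\nu,x_1)$) to extract the first-order term $\int_{X\times X}\langle D_{x_1}H(\nu,x_1),x_2-x_1\rangle\di{\bm\gamma}$. I would then replace $D_{x_1}H(\nu,x_1)$ by $D_{x_1}H(\mu,x_1)$: since ${\bm\gamma}$-a.e.\ $x_1\in\supp(\mu)$, the error is bounded by $\sup_{x_1\in\supp(\mu)}\|D_{x_1}H(\nu,x_1)-D_{x_1}H(\mu,x_1)\|_{E_X^*}\cdot\int_{X\times X}\|x_1-x_2\|_Y\di{\bm\gamma}$, where the last integral is $\le W_{2,{\bm\gamma}}(\mu,\nu)$ by Cauchy--Schwarz and the supremum vanishes as $\nu\to\mu$ by the continuity assumption $(i)$ and compactness of $\supp(\mu)$. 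The genuine Taylor remainder $\int_{X\times X}\big[H(\nu,x_2)-H(\nu,x_1)-\langle D_{x_1}H(\nu,x_1),x_2-x_1\rangle\big]\di{\bm\gamma}$ I would write in integral form $\int_0^1\langle D_{x_1+s(x_2-x_1)}H(\nu,\cdot)-D_{x_1}H(\nu,\cdot),x_2-x_1\rangle\di s$ (using convexity of $X$ and continuity of the differential) and estimate it by Cauchy--Schwarz against $W_{2,{\bm\gamma}}(\mu,\nu)$, so that it is $o_R(W_{2,{\bm\gamma}}(\mu,\nu))$.

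For $B$, I would apply the local differentiability of $\mu\mapsto H(\mu,x)$ for each fixed $x$ along the same plan ${\bm\gamma}$, giving $H(\nu,x)-H(\mu,x)=\int_{X\times X}\nabla_\mu H(\mu,x)(y_1)[y_2-y_1]\di{\bm\gamma}(y_1,y_2)+o_R(W_{2,{\bm\gamma}}(\mu,\nu))$ with remainder uniform in $x\in\supp(\mu)$ by the continuity assumption $(iii)$. Integrating against $\di\mu(x)$ and exchanging the order of integration (legitimate by the $L^\infty$ bound $(iv)$) yields $\int_{X\times X}\big(\int_X\nabla_\mu H(\mu,\tilde x)(y_1)\di\mu(\tilde x)\big)[y_2-y_1]\di{\bm\gamma}+o_R(W_{2,{\bm\gamma}})$. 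Renaming $y_1=x$ and adding to $A$ reproduces exactly the expansion of Definition \ref{Wmudiff} with candidate gradient $\nabla_\mu\mathcal{H}(\mu)(x)=D_xH(\mu,x)+\int_X\nabla_\mu H(\mu,\tilde x)(x)\di\mu(\tilde x)$, which belongs to $L^2_\mu(X;E_X^*)$ (indeed to $L^\infty_\mu$) by the bounds $(ii)$ and $(iv)$; this closes the argument.

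The main obstacle is the uniform control of the two first-order remainders: the pointwise $X$- and Wasserstein-expansions only give remainders that are infinitesimal at each fixed point or for each fixed $x$, whereas Definition \ref{Wmudiff} requires a single $o_R(W_{2,{\bm\gamma}}(\mu,\nu))$ valid for all $\nu\in\mathcal{P}(B_\mu(R))$ and all plans ${\bm\gamma}$. The devices that make this work are precisely the global bounds on the two differentials (assumptions $(ii)$ and $(iv)$), which furnish integrable domination and allow a dominated-convergence argument as ${\bm\gamma}$ concentrates on the diagonal, together with the continuity assumptions $(i)$ and $(iii)$ and the fact that all relevant points remain in the bounded set $B_\mu(R)$ (with $\supp(\mu)$ compact in term $B$ and in the leading part of $A$). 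I expect this uniformization, rather than the formal identification of the gradient, to be the delicate technical point.
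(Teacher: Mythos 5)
Your proposal is correct and coincides essentially with the paper's own proof: the same decomposition through the intermediate term $\int_X H(\nu,x)\,\di\mu(x)$ (the paper's $I_1+I_2$), the same mean-value integral form along the segments $x_1+s(x_2-x_1)$ for the $X$-differential part together with the swap $D_{x_1}H(\nu,x_1)\to D_{x_1}H(\mu,x_1)$, and the same pointwise Wasserstein expansion plus Fubini for the measure part, with $(i)$--$(iv)$ deployed exactly as you indicate to uniformize the remainders. The only cosmetic deviations are that the paper controls the swap term via H\"older and dominated convergence rather than a supremum over the compact $\supp(\mu)$, and makes your Cauchy--Schwarz estimate of the Taylor remainder precise through an explicit splitting into $\{\|x_1-x_2\|_Y<\delta\}$ and its complement.
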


\begin{proof}
Let $R>0$ and $\eta\in \mathcal{P}(B_\mu(R))$. It follows from the disintegration theorem that for $\gamma\in\Gamma(\mu,\eta)$
\begin{eqnarray}
\label{appen10}
&& \mathcal{H}(\eta)-\mathcal{H}(\mu) = \int_X H(\eta,x)\di\eta(x)-\int_X H(\mu,x)\di\mu(x)  \\
&& = \int_{X\times X} \left(H(\eta,x_2)-H(\mu,x_1)\right)\di\gamma(x_1,x_2) \nonumber \\
&&= \underbrace{\int_{X\times X} \left(H(\eta,x_2)-H(\eta,x_1)\right)\di\gamma(x_1,x_2)}_{=:I_1} +\underbrace{\int_{X}\left(H(\eta,x_1)-H(\mu,x_1)\right)\di\mu(x_1)}_{=:I_2}. \nonumber
\end{eqnarray}
Now we focus on $I_1$. Recalling that $X$ is convex and using $(i)$ and $(ii)$, we have for $x(s)=x_1+s(x_2-x_1)\in X$ with $s\in[0,1]$ and applying Fubini theorem for Bochner integral
\begin{eqnarray}
\label{appen11}
I_1 & =& \int_{X\times X} \int_0^1 \langle \Di_{x(s)}H(\eta,x(s)),x_2-x_1\rangle\di s \di\gamma(x_1,x_2) \\
& = & \int_0^1 \int_{X\times X} \langle \Di_{x(s)}H(\eta,x(s)),x_2-x_1\rangle \di\gamma(x_1,x_2) \di s \nonumber \\
& = & \int_{X\times X} \langle \Di_{x_1}H(\mu,x_1),x_2-x_1\rangle \di \gamma(x_1,x_2) \nonumber \\
&& + \underbrace{\int_{X\times X}\langle \Di_{x_1}H(\eta,x_1)-\Di_{x_1}H(\mu,x_1),x_2-x_1\rangle \di\gamma(x_1,x_2)}_{=: I_{1,1}} \nonumber \\
&& + \underbrace{\int_0^1 \int_{X\times X} \langle \Di_{x(s)}H(\eta,x(s)) - \Di_{x_1}H(\eta,x_1),x_2-x_1\rangle \di\gamma(x_1,x_2) \di s   }_{=:I_{1,2}} \nonumber.
\end{eqnarray}
As for $I_{1,1}$, using H\"older inequality, we deduce that
\begin{equation*}
|I_{1,1}| \leq \left(\int_X \|\Di_{x_1} H(\eta,x_1)-\Di_{x_1}H(\mu,x_1)\|_{E_X^*}^2 \di \mu(x_1)\right)^{\frac{1}{2}} W_{2,\gamma}(\mu,\eta).
\end{equation*}
Since, $\gamma\in\mathcal{P}(X\times X)$, we have that if $W_{2,\gamma}(\mu,\eta)\to 0$ then $W_1(\mu,\eta)\to 0$. Hence, using $(i)$ and $(ii)$, we have
$$
\begin{cases}
\Di_{x_1} H(\eta,x_1) \to \Di_{x_1}H(\mu,x_1) \quad \text{as } W_{2,\gamma}(\mu,\eta)\to 0, \\
\|\Di_{x_1} H(\eta,x_1)-\Di_{x_1}H(\mu,x_1)\|_{E_X^*} \leq 2L.
\end{cases}
$$
Therefore, by an application of the Lebesgue theorem, we get
\begin{equation}
\label{appen12}
I_{1,1} = o_R\left(W_{2,\gamma}(\mu,\eta)\right).
\end{equation}
As regards $I_{1,2}$, using again $(i)$ and $(ii)$, we know that for every $\eps>0$ there exists $\delta>0$ depending on $\eta$ such that
\begin{eqnarray*}
|I_{1,2}| & \leq & \eps\int_{\{\|x_1-x_2\|_Y < \delta\}} \|x_1-x_2\|_Y\di\gamma(x_1,x_2) + 2L \int_{\{\|x_1-x_2\|_Y\geq \delta\}} \|x_1-x_2\|_Y \di\gamma(x_1,x_2) \nonumber \\
& \leq & \eps W_{2,\gamma}(\mu,\eta) + \frac{2L}{\delta}W_{2,\gamma}^2(\mu,\eta).
\end{eqnarray*}
It follows that, if $W_{2,\gamma}(\mu,\eta) \to 0$, then we can choose $W_{2,\gamma}(\mu,\eta)\leq \frac{\eps\delta}{2L}$ so that from the last inequality we deduce for every $\eps >0$ that $|I_{1,2}| \leq 2\eps W_{2,\gamma}(\mu,\eta)$. Hence,
\begin{equation}
\label{appen13}
I_{1,2} = o_R\left(W_{2,\gamma}(\mu,\eta)\right).
\end{equation}
Finally we focus on $I_2$. We have using $(iii)$ and $(iv)$ and applying Fubini theorem for Bochner integral that
\begin{eqnarray}
\label{appen14}
I_2 & = & \int_X \int_{X\times X} \langle \nabla_\mu H(\mu,x_1)(x_3) , x_4-x_3\rangle \di\gamma(x_3,x_4) \di\mu(x_1) + o_R\left(W_{2,\gamma}(\mu,\eta)\right) \nonumber \\
& =& \int_{X\times X} \left\langle \int_X \nabla_\mu H(\mu,x_1)(x_3)\di\mu(x_1) , x_4-x_3 \right\rangle \di\gamma(x_3,x_4) + o_R\left(W_{2,\gamma}(\mu,\eta)\right).
\end{eqnarray}
Combining \eqref{appen11},\eqref{appen12},\eqref{appen13} and \eqref{appen14} with \eqref{appen10} we obtain the result.
\end{proof}

\subsection*{Acknowledgements}
The work of S. Almi was funded by the FWF Austrian Science Fund through the Projects ESP-61 and
10.55776/P35359 and by the University of Naples Federico II through FRA Project "ReSinApas".
\par
R. Durastanti has been supported by the Italian Ministry of University and Research under PON “Ricerca e Innovazione” 2014-2020 (PON R\&I, D.M. 1062/21) - AZIONE IV.6 – Contratti di Ricerca su tematiche Green – CUP E65F21003200003, and, his work has been carried out in collaboration with CRdC Tecnologie Scarl as part of the "Embodied Social Experiences in Hybrid Shared Spaces (SHARESPACE)" project - http://sharespace.eu funded by the European Union under Horizon Europe, grant number 101092889.
\par
The work of R. Durastanti and F. Solombrino has been also supported by Gruppo Nazionale per l’Analisi Matematica, la Probabilità e le loro Applicazioni (GNAMPA-INdAM, Project 2024 ``Problemi di controllo ottimo nello spazio di Wasserstein delle misure definite su spazi di Banach'', CUP E53C23001670001).
\par
The work of F. Solombrino and S. Almi is part of the MUR - PRIN 2022, project Variational Analysis of Complex Systems in Materials Science, Physics and Biology, No. 2022HKBF5C, funded by European Union NextGenerationEU.
\par
F. Solombrino also acknowledges support by project Starplus 2020 Unina Linea 1 "New challenges in the variational modeling of continuum
mechanics" from the University of Naples Federico II and Compagnia di San Paolo.

%\subsection*{Declarations of interest:} none.

\end{document}